\documentclass[12pt,a4paper,twoside]{amsart}

\numberwithin{equation}{section}\usepackage{graphicx}
\usepackage{hyperref}
\usepackage{amsmath,amssymb,amsthm}
\usepackage{graphicx}

\usepackage[all]{xy}  % For commutative diagrams with \begin{CD}
\usepackage{mathrsfs}  % For script letters if needed
\usepackage{enumerate} % For custom enumerate formatting
\usepackage{rotating}
\usepackage{mathrsfs}
\usepackage{amscd,amssymb,amsopn,amsmath,amsthm,graphics,amsfonts,enumerate,verbatim,calc}
\usepackage[all]{xy}
\usepackage[utf8]{inputenc}
\usepackage{color}

\newtheorem{theorem}{Theorem}[section]
\newtheorem{claim}[theorem]{Claim}
\newtheorem{construction}[theorem]{Construction}
\newtheorem{lemma}[theorem]{Lemma}
\newtheorem{proposition}[theorem]{Proposition}
\newtheorem{fact}[theorem]{Fact}

\newtheorem{observation}[theorem]{Observation}

\theoremstyle{definition}
\newtheorem{definition}[theorem]{Definition}

\newtheorem{hypothesis}[theorem]{Hypothesis}

\theoremstyle{remark}
\newtheorem{remark}[theorem]{Remark}

\newtheorem{notation}[theorem]{Notation}

\newcommand{\End}{{\rm End}}

\newcommand{\Ext}{{\rm Ext}}

\newcommand{\otp}{{\rm otp}}

\newcommand{\supp}{{\rm supp}}

\newcommand{\Ord}{{\rm Ord}}

\newcommand{\Hom}{{\rm Hom}}

\newcommand{\bd}{{\rm bd}}

\newcommand{\vil}{\operatornamewithlimits{\varinjlim}}
\newcommand{\lo}{\longrightarrow}

\newcommand{\rest}{{\restriction}}
\newcommand{\dom}{{\rm dom}}

\newcommand{\cF}{{\mathscr F}}

\newcommand{\cT}{{\mathscr T}}

\newcommand{\cf}{{\rm cf}}

\newcount\skewfactor
\def\mathunderaccent#1#2 {\let\theaccent#1\skewfactor#2
	\mathpalette\putaccentunder}
\def\putaccentunder#1#2{\oalign{$#1#2$\crcr\hidewidth
		\vbox to.2ex{\hbox{$#1\skew\skewfactor\theaccent{}$}\vss}\hidewidth}}

\newenvironment{PROOF}[2][\proofname.]
{\begin{proof}[#1]\renewcommand{\qedsymbol}{\textsquare$_{\rm #2}$}}
	{\end{proof}}

\usepackage{hyperref}

\begin{document}

	\title[Prescribed endomorphism ring]{Representing a ring as the endomorphism ring of an abelian group}
	
	\author[M. Asgharzadeh]{Mohsen Asgharzadeh}
	\address{Mohsen Asgharzadeh, Hakimiyeh, Tehran, Iran.}
	\email{mohsenasgharzadeh@gmail.com}
	
	\author[M. Golshani]{Mohammad Golshani}
	\address{Mohammad Golshani, School of Mathematics, Institute for Research in Fundamental Sciences (IPM), P.O.\ Box:
		19395--5746, Tehran, Iran.}
	\email{golshani.m@gmail.com}
	
	\author[S. Shelah]{Saharon Shelah}
	\address{Saharon Shelah, Einstein Institute of Mathematics, The Hebrew University of Jerusalem, Jerusalem,
		91904, Israel, and Department of Mathematics, Rutgers University, New Brunswick, NJ
		08854, USA.}
	\email{shelah@math.huji.ac.il}
	
	\thanks{The second author's research has been supported by a grant from IPM (No. 1404030417). The
		third author's research was partially supported by NSF grant no. DMS 1833363. This is publication 1045b of the third author.}
	
	\subjclass[2010]{Primary: 16S50; 03E75; Secondary: 20A15; 13L05}
	
	\keywords{Almost free abelian groups; black boxes; Baer's realization problem; endomorphism algebras;  set-theoretic methods in algebra; relative trees.}

 \begin{abstract}
	
We investigate Baer's realization problem for almost free abelian groups, focusing on the extent to which rings can be represented as endomorphism rings under strong freeness conditions. Building on earlier work that relied on additional set-theoretic principles such as the diamond or strong black boxes, we develop new methods that significantly weaken these assumptions.

The main result is obtained in ZFC (assuming a mild cardinal arithmetic configuration): for a strong limit singular cardinal $\mu$ with $\mu^+ < 2^\mu < 2^{\mu^+}$,  and for a wide class of cotorsion-free rings, we construct $\mu^+$-free modules whose endomorphism rings are isomorphic to the given ring. This provides a substantial partial solution to a problem of G\"{o}bel and Trlifaj.

The key innovation is the integration of $\kappa$-frame constructions with Shelah's Super Black Box, enabling a delicate diagonalization that eliminates nontrivial endomorphisms while preserving high degrees of freeness.

\end{abstract}

	\date{\today}
	\maketitle

\section{Introduction} \label{0}
The realization of rings as endomorphism rings of abelian groups, known as \emph{Baer's realization problem}, stands as a fundamental challenge at the intersection of ring theory, abelian group theory, and set theory. The central question---first posed by L\'aszl\'o Fuchs in 1958 and explicitly codified as Problem~44 in his monograph \cite{F}---asks: \begin{quote}\emph{Which rings can be realized as the endomorphism ring $\operatorname{End}_{\mathbb{Z}}(G)$ of an abelian group $G$?}\end{quote} This seemingly elementary inquiry has, over the ensuing decades, proven remarkably resilient, giving rise to a rich tapestry of partial results, counterexamples, and deep structural classifications that link the algebraic architecture of the ring to the set-theoretic and homological properties of the underlying group.

A landmark result in this direction is due to Corner \cite{Cor63}, who proved that every countable, torsion-free ring $R$ (with $1$) not containing $\mathbb{Q}$ is the endomorphism ring of a countable torsion-free abelian group. This breakthrough initiated an extensive research program, leading to deep connections between algebra and set theory.

However, Corner's construction yields groups that are not $\aleph_1$-free. A natural refinement of the problem is to demand \emph{freeness conditions} on the realizing group. Under the Generalized Continuum Hypothesis (GCH), it is possible to construct $\aleph_n$-free abelian groups with prescribed endomorphism rings \cite{579}. A major advance by G\"obel, Herden, and Shelah \cite{Sh:970} eliminated GCH, constructing $\aleph_n$-free groups for any fixed $n \in \mathbb{N}$ using the \emph{Strong Black Box}. This raised the question---posed explicitly by G\"obel and Trlifaj \cite[Problem 24.10.2]{GT}---whether such constructions can be extended to \emph{$\aleph_\omega$-free} groups using more general combinatorial principles.

Concurrently, another line of research investigated realizations under special \emph{set-theoretic hypotheses} such as the diamond principle $\diamondsuit$. For example, Dugas and G\"obel \cite{DG82} showed that if $\diamondsuit_\lambda(E)$ holds for a suitable stationary set $E$, then every $p$-cotorsion-free ring of cardinality $<\lambda$ is the endomorphism ring of a strongly $\lambda$-free $R$-module. While powerful, such results depend on axioms beyond ZFC.

The \emph{Weak Diamond Principle} $\Phi$ is a combinatorial principle concerning functions from uncountable sets to $\{0,1\}$. It is a significant weakening of Jensen's Diamond $\lozenge$, which itself follows from G\"odel's Axiom of Constructibility ($V = L$). Both principles are consistent with $\mathrm{ZFC} + \mathrm{GCH}$, with the following dependency:
\[
V = L \quad\Longrightarrow\quad \lozenge \quad\Longrightarrow\quad \Phi.
\]

To make Theorem \ref{id71} accessible, we introduce the concept of a \emph{$\kappa$-frame}---an algebraic scaffold for building modules with controlled properties. A $\kappa$-frame consists of a continuous chain of $R$-modules $G_i^*$ ($i \leq \kappa$) together with a family of test modules $H_\zeta^*$ extending $G_\kappa^*$, such that each quotient $H_\zeta^*/G_i^*$ ($i<\kappa$) is free. A \emph{nice} $\kappa$-frame adds further rigidity: explicit bases, homomorphisms $g_\zeta:G_\kappa^*\to R^2$ that cannot be extended to $H_\zeta^*$, and a non-embedding condition $H_\zeta^*/G_\kappa^*\not\hookrightarrow H_\zeta^*$. For more details, see Definition~\ref{d65}. These conditions force any endomorphism of the final constructed module to respect the stepwise structure.
 Section 3 presents our first realization theorem under weak diamond:

\begin{theorem}\label{id71}
	There exists a $\theta$-free $R$-module $M$ such that $\operatorname{End}(M,+) \cong R$, under the assumptions:
	\begin{enumerate}
		\item[(a)] $\boldsymbol{f}$ is a nice $\kappa$-frame,
		\item[(b)] $\chi < \theta \leq \lambda$,
		\item[(c)] $\bar{S}=\langle S_i: i<\lambda \rangle$ is a sequence of pairwise disjoint stationary subsets of $\{\alpha<\lambda:\cf(\alpha)=\kappa\}$
with union $S$,

         \item[(d)] weak diamond holds for each $S_i$,
		\item[(e)] $\bar\eta$ is $\theta$-free, each
$\eta_\alpha:\kappa\to\alpha$ is increasing, and the coding
condition $(e_1)$ or $(e_2)$ of Hypothesis~\ref{hp71} holds.
	\end{enumerate}
	In particular, $(M,+)$ is $\theta$-free if $(R,+)$ is $\theta$-free.
\end{theorem}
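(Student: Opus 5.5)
The plan is a weak-diamond diagonalization along $\lambda$: build $M$ as the union of a continuous chain $\langle M_\alpha:\alpha\le\lambda\rangle$ of $R$-modules. Its generators are indexed by $\lambda$, together with, for each $\delta\in S$, a copy of the test module $H^*_{\zeta_\delta}$ glued along the branch $\eta_\delta$. Concretely, $M$ is generated by free generators $x_{\alpha}$ ($\alpha<\lambda$) and, for each $\delta\in S$, elements realizing $H^*_{\zeta}/G^*_\kappa$ over the copy of $G^*_\kappa$. That copy is obtained by sending the $i$-th step of the chain $G^*_i$ to the generators indexed by $\eta_\delta(i)$. The choice $\zeta=\zeta_\delta\in\{0,1\}$-coded data is the parameter fixed by the weak diamond on $S_i$. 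So $M=\bigoplus_\alpha R x_\alpha$ amalgamated with the $H^*_{\zeta_\delta}$ over $G^*_\kappa$-copies, exactly as in Hypothesis~\ref{hp71}.

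\textbf{Step 1 (freeness).} Let $X\subseteq \lambda$ with $|X|<\theta$. Since $\bar\eta$ is $\theta$-free, we can well-order the $\delta\in X\cap S$ so that each $\eta_\delta$ has a tail $\eta_\delta\restriction[i_\delta,\kappa)$ disjoint from the earlier branches. Because $H^*_\zeta/G^*_{i}$ is free for $i<\kappa$, adding each $H^*_{\zeta_\delta}$ over what was already built is a free extension. Concluding that the submodule generated by $X$ is free then uses the standard Eklof-type filtration lemma. This yields $\theta$-freeness of $M$ as an $R$-module, and hence of $(M,+)$ when $(R,+)$ is $\theta$-free.

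\textbf{Step 2 (killing endomorphisms).} Let $h\in\End(M,+)$, and suppose $h$ is not multiplication by an element of $R$. Closing under $h$ gives a club $C\subseteq\lambda$ of $\alpha$ with $h[M_\alpha]\subseteq M_\alpha$. The coding condition $(e_1)/(e_2)$ lets us encode $h\restriction M_\delta$, or rather the induced map on the $G^*_\kappa$-copy composed with projections, as a function $S_i\cap\delta\to 2$-type datum.

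The nice-frame hypotheses supply, for each $\zeta$, the map $g_\zeta:G^*_\kappa\to R^2$ that does not extend to $H^*_\zeta$. The colouring is chosen so that at $\delta$ one of the two choices of $\zeta_\delta$ makes $h\restriction$ (copy of $G^*_\kappa$), after projecting, agree with a non-extendable $g_\zeta$ modulo trivial maps. The weak diamond on $S_i$ guarantees stationarily many $\delta\in C\cap S_i$ where the guess was correct. At such $\delta$, $h$ restricted to $H^*_{\zeta_\delta}$ would provide an extension of $g_{\zeta_\delta}$, which is a contradiction.

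\textbf{Step 3 (reduction to the scalar case).} Before Step 2 applies, one must show that $h$ maps the $H^*$-copy at $\delta$ essentially into $M_\delta+H^*_{\zeta_\delta}$. This is where the non-embedding condition $H^*_\zeta/G^*_\kappa\not\hookrightarrow H^*_\zeta$ is used, together with the explicit bases. It rules out $h$ sending the new part into another copy or into a free part. Consequently $h-r$ vanishes on $M_\delta$ for a suitable $r\in R$, after also using that $R$ is cotorsion-free so that $\kappa$-limits are controlled. Finally, separate the cases $\delta\in S_i$ for different $i$ so that the disjointness of the $S_i$ lets us handle $\lambda$ many potential counterexample invariants. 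Then $\End(M,+)\cong R$, with $R$ acting by scalar multiplication, which is injective since $M$ is free-like over $R$.

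The main obstacle is Step 2's coding. Weak diamond only predicts one bit per $\delta$, whereas $h\restriction M_\delta$ carries much more information. The plan is to use $(e_1)$ or $(e_2)$ to reduce to a binary decision: $\zeta_\delta\in\{0,1\}$, where the two test modules are chosen so that $h$ cannot simultaneously extend over both. Making that binary reduction compatible with the club and with the $\kappa$-cofinal branches is the delicate point.
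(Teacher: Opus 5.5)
There is a genuine gap in Step~2, at exactly the point you flag as ``the delicate point''. You never produce two options at $\delta$ that a bad $h$ cannot both extend over, and the mechanism you propose cannot produce them.

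Your bit chooses between two test modules $H^*_{\zeta_0}$ and $H^*_{\zeta_1}$, both glued in the standard way along the copy $M_{\eta_\delta}$ of $G^*_\kappa$. The only nice-frame axiom that can make an extension fail is clause (i), and it concerns the maps $g_\zeta$, which play no role in an untwisted gluing. In the frames of Propositions~\ref{d68plus} and~\ref{d68-aleph1-plus-torsionless}, the module $H^*_\zeta\supseteq G^*_\kappa$ is the same for every $\zeta$ up to isomorphism over $G^*_\kappa$; only $g_\zeta$ varies. So your two options are isomorphic over everything already built, and $h$ extends over one if and only if it extends over the other. Likewise, ``one of the two choices makes $h\upharpoonright$(copy of $G^*_\kappa$) agree with a non-extendable $g_\zeta$'' has no content. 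The map $h\upharpoonright M_{\eta_\delta}$ is determined by $h$ alone, does not depend on the choice at $\delta$, and is unrelated to $g_\zeta$. An extension of it over $H^*_\zeta$ is not an extension of $g_\zeta$.

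The missing idea (Lemma~\ref{d89a} and Proposition~\ref{12}) is to keep the test module fixed and twist the gluing.
\begin{enumerate}
\item[(1)] Find $z=rx+y$, with $x\neq y$ basis elements, such that $h(z)\notin Rz$. Such $z$ exists for every $h$ not of multiplication type: a coefficient computation on the basis of $M_*$ gives $h=\mu_t$ on $M_*$ otherwise, and clause (f) then passes this to the attached blocks. This is where (f) enters; cotorsion-freeness of $R$ is not a hypothesis here.
\item[(2)] Take $\zeta$ with $d_\zeta=(r,1)$, so that $\sigma:Rd_\zeta\cong Rz$. At $\delta$, glue the same $H^*_\zeta$ either via $h^*_{\eta_\delta}$ or via $h^*_{\eta_\delta}+\sigma\circ g_\zeta$.
\item[(3)] Suppose $h$ extended over both options, even at some later coherent stage, and let $k_0,k_1$ be the induced maps on $H^*_\zeta$. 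Then $k_1-k_0=h\circ\sigma\circ g_\zeta$ on $G^*_\kappa$.
\item[(4)] By $\theta$-freeness taken relative to the direct summand $Rz$, there are free submodules $P=Rz\oplus P'$ containing the relevant images, and an $R$-map $\pi$ to $Rz$ with $\pi(Rz)=0$ and $\pi(h(z))\neq0$.
\item[(5)] Then $\sigma^{-1}(\pi k_1-\pi k_0)$ extends $\tau\circ g_\zeta$ to $H^*_\zeta$, where $\tau=\sigma^{-1}\pi h\sigma$ and $\tau(d_\zeta)\neq0$. This contradicts (i).
\end{enumerate}
This dichotomy is what makes the weak-diamond colour a single bit. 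Accordingly, the stationary sets are indexed by the test elements $z$, one $S_z$ for each $z$, and the colouring is evaluated along a binary tree of approximations. The coding conditions $(e_1)/(e_2)$ are used only to get $|\mathcal T|<\lambda$, so that the approximations have size $<\lambda$ and can be coded on a club. They do not perform the binary reduction you ask of them.
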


Theorem \ref{id71} establishes a general framework for realizing rings as endomorphism
rings under the Weak Diamond Principle $\Phi$. Specifically, given a nice $\kappa$-frame---an
algebraic scaffold that encodes the stepwise construction of a module---and assuming that
$\Phi$ holds on a suitable stationary set, we construct a $\theta$-free $R$-module $M$
such that $\operatorname{End}_{\mathbb{Z}}(M) \cong R$. The proof proceeds by a transfinite
induction along the frame, using weak diamond to predict and eliminate all potential
endomorphisms that are not of multiplication type. A key feature of the theorem is its
flexibility: the $\kappa$-frame formalism accommodates both countable and uncountable
cofinalities, and when the additive group of $R$ is $\theta$-free, the resulting module
$(M,+)$ inherits this freeness. This result extends earlier work under stronger
set-theoretic assumptions, such as $\diamondsuit$, and provides a unified construction
method that leverages the equivalence of weak diamond with mild cardinal arithmetic.

While Theorem \ref{id71} provides a powerful realization result under weak diamond, it naturally raises the question of whether such constructions can be carried out in ZFC alone. This is significant because weak diamond, though consistent, is not provable in ZFC---it requires mild cardinal arithmetic ($2^\lambda<2^{\lambda^+}$). The desire for ZFC theorems reflects a central theme in modern abelian group theory: to determine what can be achieved using only the standard axioms.
The Super Black Box, developed by Shelah \cite{Sh:1268}, is a combinatorial device that often eliminates set-theoretic hypotheses, yielding ZFC results under certain cardinal arithmetic configurations.
For more details, see Definition~\ref{sbbd}. Specifically, when $\mu$ is a strong limit singular cardinal and $\lambda=\min\{\sigma:2^\sigma>2^\mu\}<2^\mu$, the Super Black Box provides prediction machinery that enables a sophisticated diagonalization process. This configuration is not provable in ZFC for any cardinals, and indeed, if $\text{GCH}$ holds, it never occurs; nevertheless, we can consider the cardinal arithmetic assumptions above.

\begin{theorem}\label{id72}
	Assume that $\mu$ is a strong limit singular cardinal of cofinality
$\aleph_0$ and
\(
\Lambda=\min\{\rho:2^\rho>2^\mu\}<2^\mu.
\)
Let $R$ be a ring with $1$, $|R|<\mu$, whose additive group is
cotorsion-free and $\sigma$-free, where $\sigma$ is an infinite cardinal or
$0$.  Then there exists a $\mu^+$-free $R$-module $M$ of cardinality
$\Lambda$ such that
\(
\End_{\mathbb Z}(M)\cong R.
\)
Moreover, $(M,+)$ is $\min\{\sigma,\mu^+\}$-free, with the usual convention
that this assertion is void when $\sigma=0$.
\end{theorem}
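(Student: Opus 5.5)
We prove Theorem~\ref{id72} by running the construction behind Theorem~\ref{id71} with the Super Black Box of Definition~\ref{sbbd} in place of weak diamond. First we fix the combinatorial data. Since $\mu$ is strong limit singular with $\cf(\mu)=\aleph_0$ and $\Lambda=\min\{\rho:2^\rho>2^\mu\}<2^\mu$, the Super Black Box \cite{Sh:1268} gives a family $\bar\eta=\langle\eta_\alpha:\alpha\in S\rangle$ of cofinal branches. The index set $S$ has size $\Lambda$ (or lies in a tree of height $\kappa=\aleph_0$ over a set of size $\Lambda$). The family is $\mu^+$-free: every subfamily of size $\le\mu$ can be enumerated so that each branch is eventually disjoint from the union of the earlier ones. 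It also comes with a prediction property: for every colouring $\mathbf c$ with at most $2^\mu$ values, indexed by $\bar\eta$-closed subsets of size $<\mu$, there are many $\alpha$ (in the relevant stationary/club sense) at which the colour of the restriction is \emph{guessed}, i.e.\ determined by data available below $\alpha$. Weak diamond only needs $2$ colours, so this is stronger in that respect. It holds in ZFC under the stated cardinal arithmetic, and the gap $\Lambda<2^\mu$ is what makes the coding possible.

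Next we build a nice $\aleph_0$-frame $\boldsymbol f$ (Definition~\ref{d65}) from $R$. Take $G^*_n=\bigoplus_{k<n}Rx_k$, so $G^*_\omega=\bigoplus_{k<\omega}Rx_k$. The test modules $H^*_\zeta$ are generated over $G^*_\omega$ by elements $y_{\zeta,n}=\sum_{k\ge n}\frac{\,\prod_{j<k}p_j\,}{\prod_{j<n}p_j}\,r_{\zeta,k}x_k$, in the usual Corner/Dugas--G\"obel style. We use distinct primes, and we use cotorsion-freeness of $(R,+)$ to guarantee that $H^*_\zeta/G^*_n$ is free. We also need a homomorphism $g_\zeta:G^*_\omega\to R^2$ that does not extend to $H^*_\zeta$, together with the non-embedding condition $H^*_\zeta/G^*_\omega\not\hookrightarrow H^*_\zeta$. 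This is routine, since cotorsion-freeness says there are no nonzero homomorphisms from $\mathbb Z$-adic completions into $R$. The construction uses $|R|<\mu$ and gives at most $2^{\aleph_0}<\mu$ test modules.

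Then we define $M$. Start from the free $R$-module on generators indexed by the tree. For each $\alpha\in S$ glue in a copy of some $H^*_{\zeta(\alpha)}$ along the branch $\eta_\alpha$, with $\zeta(\alpha)$ chosen according to the prediction $\mathbf c$. Two properties follow.

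\emph{Freeness.} Any subset of $M$ of size $\le\mu$ lies in a submodule generated by $\le\mu$ branches. By $\mu^+$-freeness of $\bar\eta$, that submodule is built by successively adjoining pieces $H^*_\zeta/G^*_n$, each of which is free. So $M$ is $\mu^+$-free as an $R$-module. Applying the same argument to the additive groups, using that $(R,+)$ is $\sigma$-free, gives $\min\{\sigma,\mu^+\}$-freeness of $(M,+)$. Clearly $|M|=\Lambda$.

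\emph{Rigidity.} This is the main obstacle. Let $h\in\End_{\mathbb Z}(M)$. We need $h$ to be multiplication by some $r\in R$. The standard reduction, following the proof of Theorem~\ref{id71}, shows that otherwise there are elements $x$ such that $h(x)\notin Rx+(\text{small})$. Restrict $h$ to small $\bar\eta$-closed pieces; each restriction is coded by a colour, and there are at most $2^\mu$ possible restrictions on pieces of size $<\mu$ (using that $\mu$ is strong limit). The prediction property then yields some $\alpha$ where the coded restriction of $h$ to the part below $\alpha$ was anticipated. At such $\alpha$ the choice of $\zeta(\alpha)$ was made so that the induced map on $G^*_\omega$ equals the non-extendable $g_\zeta$, or its relevant $R^2$-coordinate pair. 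But $h$ does extend to the glued $H^*_\zeta$, which is a contradiction.

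The delicate point is that, unlike with diamond, we do not guess $h$ itself, only its colour. So we must choose the colouring so that \emph{every} possible restriction fails on at least one of the two options encoded by $R^2$. This is exactly why the frame uses $g_\zeta:G^*_\omega\to R^2$: one of two gluings kills $h$ regardless. The other thing to verify is that the pieces $H^*_\zeta/G^*_\omega$ cannot map into each other, which is where non-embedding is used. Once rigidity holds, the map $R\to\End_{\mathbb Z}(M)$, $r\mapsto(\text{multiplication by }r)$, is surjective, and it is injective because $M$ contains a free $R$-summand. Hence $\End_{\mathbb Z}(M)\cong R$.
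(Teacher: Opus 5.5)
\textbf{There is a genuine gap: the rigidity step uses a prediction principle that the Super Black Box does not provide.} You glue one test module at each $\alpha\in S$. You then assume that at many $\alpha$ a colour of $h\restriction M_{<\alpha}$, taking up to $2^\mu$ values, has been anticipated.

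Definition~\ref{sbbd} gives something quite different:
\begin{enumerate}
\item[(1)] the colourings take only $\Theta<\mu$ values;
\item[(2)] each $\delta\in S$ carries a whole row of $\Lambda$ ladders $C^\delta_\gamma$;
\item[(3)] the only guarantee is that every $q:\delta\to 2^\mu$ is matched at \emph{some} $\gamma$ in that row.
\end{enumerate}
With $2^\mu$ colours the matching already fails outright, since only $\Lambda<2^\mu$ colours are chosen per row. In the form you state it, with $2^\mu$-valued colours guessed at single stages, the principle would imply $\diamondsuit_\Lambda$: use the colouring that records the restriction itself. But $\diamondsuit_\Lambda$ fails here, because $2^{<\Lambda}=2^\mu>\Lambda$.

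So a single gluing at $\alpha$ has no reason to kill $h$, and the contradiction you draw at ``such $\alpha$'' is unsupported. A smaller point: the bound $2^\mu$ on the relevant maps comes from the minimality of $\Lambda$ applied to stage modules of size $<\Lambda$. It does not come from strong limitness applied to pieces of size $<\mu$.

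\textbf{The missing idea is to kill, row by row, all bad endomorphisms of an already built stage, rather than to guess $h$.} The paper does this as follows.
\begin{enumerate}
\item[(1)] It partitions $S=\bigcup_{b\in Z}S_b$, so that each row has a fixed unimodular target $b$.
\item[(2)] It builds $G_{f_\xi}$ by recursion on $\xi<\Lambda$.
\item[(3)] At $\xi\in S_b$ one has $|G_{f_\xi}|<\Lambda$, hence $|\End(G_{f_\xi},+)|\le 2^\mu$. Each bad $g$ gets a code $<2^\mu$.
\item[(4)] $F^\xi_\gamma$ sends the constant function with that code to the persistent killing value $e_g(\gamma)$ of Claim~\ref{pnon}. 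This value lies in $E=(R\setminus\{0\})\times 2$, with $|E|\le\Theta<\mu$.
\item[(5)] $\Gamma^{\xi,\gamma}_{e^\xi_\gamma}$ is attached for \emph{every} $\gamma<\Lambda$. Hence each bad $g$ is killed at some $(\xi,\gamma)$ (Claim~\ref{non}).
\item[(6)] A club argument picks $\xi\in S_b$ with $h[G_{f_\xi}]\subseteq G_{f_\xi}$ and $h(b)\notin Rb$.
\end{enumerate}
Freeness must then be verified for the whole ladder family, as in Claim~\ref{freeness}, not for one branch per $\alpha$.

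A nice frame could presumably replace the $\Gamma_{c,i}$ inside this scheme, with binary colours via Proposition~\ref{12}. Note, though, that the two options there are $\iota\in\{0,1\}$, i.e.\ whether one twists by $g_{\zeta,z}$. They are not ``encoded by $R^2$''.

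\textbf{Your frame sketch with distinct primes is also unsafe.} For $R=\mathbb Z_{(2)}$, all but at most one $p_j$ are units. Then the recursion $u_n=p_nu_{n+1}+v_n$ is always solvable in $R^2$, so the non-extension clause (i) fails. Use $k_n=n+2$ as in Proposition~\ref{d68plus}. There, freeness of $H^*_\zeta/G^*_n$ comes from the construction itself, and cotorsion-freeness is what yields clauses (f) and (i).
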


 Theorem \ref{id72}, in contrast, dispenses with extra set-theoretic principles and
 instead relies on the Super Black Box (see Fact \ref{super-bbthm} for the existence
 of such sequences). This combinatorial device predicts, for each
 \[
 \xi \in S \subseteq \{\delta < \lambda : \operatorname{cf}(\delta) = \kappa\},
 \quad \text{with } S \text{ stationary},
 \]
 the correct local choices at coordinates $(\xi,\gamma)$, denoted   $y_{\xi,\gamma,0}$ (see
 Definition~\ref{defy}).
 The proof proceeds as follows. We begin by fixing a free $R$-module
 \(
 X = \bigoplus_{x \in \mathbf I} R x \),
 \(|\mathbf I| = \Lambda,
 \)
 which will serve as the dense free core of the final module $M$. The Super Black Box
 provides a $\mu^+$-free family of clubs $\bar{C} = \langle C_\gamma^\delta :
 \delta \in S,\ \gamma < \Lambda \rangle$ that allows us to assign local test
 modules at stationary many stages while keeping the construction free.
 For each nonzero $c \in R$ and each bit $i \in \{0,1\}$, we define a local test
 module $\Gamma_{c,i}^{\delta,\gamma}$. These modules are generated by elements
 satisfying the recursive equations
 \[
 y_n^{\delta,\gamma,c,i}
 = k_n y_{n+1}^{\delta,\gamma,c,i}
 + x_{\alpha_n}^b + i \ell_n^c b,
 \]
 where $\langle \alpha_n : n < \omega \rangle$ enumerates $C_\gamma^\delta$,
 $k_n = n+2$, and $\ell_n^c$ comes from the factorial expansion of a carefully
 chosen element of the $\mathbb Z$-adic completion.  This is the key mechanism that prevents unwanted endomorphisms from
 extending.
 The construction proceeds by building an increasing chain of approximations
 $G_f$ indexed by functions
 \[
 f : (S \cap \xi) \times \xi \longrightarrow (R \setminus \{0\}) \times 2.
 \]
 We collect all of these functions in $\mathbf P_\xi$.
 For $f \in \mathbf P_\xi$, the module $G_f$ is the $R$-submodule of the
 $\mathbb Z$-adic completion $\widehat X$ generated by $X_{<\xi}$ together
 with all local modules $\Gamma_{f(\delta,\gamma)}^{\delta,\gamma}$ for
 $(\delta,\gamma) \in \operatorname{dom}(f)$. The function $f$ thus encodes
 which test modules are attached at each stage. The Super Black Box provides
 sequences that correctly predict, for each bad endomorphism $g$, a coordinate
 $(\xi,\gamma)$ at which the attached test module permanently prevents $g$
 from extending to any later stage.
 The key technical claims \ref{pnon} and \ref{non} show that for any bad endomorphism
 $g \in \operatorname{End}(G_f, +)$ with $g(b) \notin Rb$, there exists a
 pair $(c,i) \in E$ such that attaching $\Gamma_{c,i}^{\delta,\gamma}$ at
 the next stage kills any possible extension of $g$.
 At each stationary stage $\xi \in S$, we apply the Super Black Box to choose
 the entire row of values $\{f(\xi,\gamma) : \gamma < \Lambda\}$ so that
 every bad endomorphism of $G_{f_\xi}$ is killed at some $\gamma < \Lambda$.

	\section{Conveniences}
	In this section we provide some preliminaries from algebra and set theory.
$R$ denotes an associative ring with $1$.
 By a module we mean a left $R$-module which is unitary.
	 All groups under consideration are abelian.
	For each pair of left $R$-modules $M,N$ we set $\Ext(M,N):=\Ext^1_{R}(M,N)$ and similarly $\Hom(M,N):=\Hom_{R}(M,N)$, provided the ring $R$ is clear from the context.
	The notation $(R,+)$ (resp. $(M,+)$) stands for the abelian group structure of $R$ (resp. $M$).

	\begin{definition}
		\begin{enumerate}
			\item[(i)] An abelian group $G$ is called {\em cotorsion} if $\Ext(J, G) = 0$ for all torsion-free abelian groups $J$. In other words, $G$ is cotorsion provided that it is a direct summand of every abelian group $H$ containing $G$ such that $H/G$ is torsion-free.
			\item[(ii)] An abelian group $G$ is called {\em cotorsion-free} if it does not contain any non-zero subgroup which is cotorsion.
\item[(iii)] An abelian group $G$ is called {\em  torsionless} if
for every $0\neq g\in G$ there exists
$
\phi\in\Hom_{\mathbb Z}(G,\mathbb Z)$
such that
$\phi(g)\neq0.$
		\end{enumerate}
	\end{definition}
	
	\begin{remark}
A torsionless group is cotorsion-free. It is also known (see \cite{EM02}) that an abelian group is cotorsion-free if and only if it is torsion-free and has neither a divisible non-trivial subgroup nor a subgroup isomorphic to $(J_p,+)$, the additive group of the $p$-adic numbers.
	\end{remark}
	
	\begin{definition}\label{k-free}
		An abelian group $G$ is called {\em $\lambda$-free} if every subgroup of $G$ of cardinality less than $\lambda$ is free.
	\end{definition}

	\begin{fact}
		\begin{enumerate}
			\item[(i)] Let $G$ be $\aleph_1$-free. Then $G$ is cotorsion-free.
			\item[(ii)] Let $G$ be cotorsion-free and reduced. Then $\End(G)$, as an abelian group, is cotorsion-free.
		\end{enumerate}
	\end{fact}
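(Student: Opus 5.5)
The statement to prove is the Fact: (i) an $\aleph_1$-free group is cotorsion-free, and (ii) if $G$ is cotorsion-free and reduced, then $\End(G)$ is cotorsion-free as an abelian group. Both parts will be reduced to the characterization recalled in the Remark. An abelian group is cotorsion-free iff it is torsion-free and contains neither a nontrivial divisible subgroup nor a copy of $(J_p,+)$ for any prime $p$. So in each part I will rule out torsion, divisible subgroups and copies of $J_p$, using only elementary arguments about countable subgroups and homomorphisms into $G$.

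\textbf{Part (i).} Let $G$ be $\aleph_1$-free, so every countable subgroup is free.
\begin{enumerate}
\item[(a)] $G$ is torsion-free, since every cyclic subgroup $\langle g\rangle$ is countable, hence free.
\item[(b)] $G$ has no nonzero divisible subgroup. A nonzero divisible subgroup contains a nonzero element $g$. Choosing successive roots $g=n_1g_1$, $g_k=(k+1)g_{k+1}$ gives a countable subgroup $\langle g_k:k\ge1\rangle$ in which $g$ is divisible by every integer. In a free group only $0$ is divisible by all integers, a contradiction.
\item[(c)] $G$ has no copy of $J_p$. Since $J_p$ is uncountable, first pass to a countable subgroup. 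The subgroup $\mathbb Z_{(p)}\cdot 1\subseteq J_p$ is countable and not free, because $1$ is divisible by every integer prime to $p$. Hence $J_p$ is not $\aleph_1$-free, and neither is any group containing it.
\end{enumerate}
This argument is routine.

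\textbf{Part (ii).} Let $G$ be cotorsion-free and reduced, and put $E=\End(G)$.
\begin{enumerate}
\item[(a)] $E$ is torsion-free, because $G$ is torsion-free: $n\varphi=0$ forces $n\varphi(x)=0$ for all $x$, hence $\varphi=0$.
\item[(b)] $E$ has no nonzero divisible subgroup. Suppose $D\le E$ is nonzero divisible. For each $x\in G$ the evaluation map $D\to G$, $\varphi\mapsto\varphi(x)$, is additive, so its image is a divisible subgroup of $G$. That image must be $0$ because $G$ is reduced. Hence every $\varphi\in D$ vanishes, contradicting $D\ne0$.
\item[(c)] $E$ has no copy of $J_p$. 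Let $\iota:J_p\to E$ be an embedding. For $x\in G$, the map $J_p\to G$, $a\mapsto\iota(a)(x)$, is a homomorphism. Its image is a quotient of $J_p$, and I claim it must be $0$.
\end{enumerate}

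\textbf{The main obstacle is the claim in (ii)(c).} Quotients of $J_p$ need not contain a copy of $J_p$, so the argument must go through the kernel $K$. If $K\ne 0$, then $J_p/K$ is a proper quotient of the torsion-free rank-one $\mathbb Z_p$-module $J_p$ in the $\mathbb Z$-module sense. Since $G$ is torsion-free, $J_p/K$ is torsion-free, so $K$ is pure in $J_p$. Purity makes $K$ contain $p$-adically closed pieces, and I expect this to force $J_p/K$ to be divisible, hence $0$ in the reduced group $G$. If $K=0$, the map is injective and $G$ contains $J_p$, which is impossible.

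To make the $K\ne0$ case precise, I would instead use the standard fact that the image of $J_p$ in a torsion-free group is a cotorsion group: $J_p$ is cotorsion, and torsion-free quotients of cotorsion groups are cotorsion. A cotorsion-free $G$ contains no nonzero cotorsion subgroup, so the image is $0$. Hence $\iota(a)(x)=0$ for all $a$ and $x$, which means $\iota=0$, a contradiction.

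This is the one nontrivial input; if needed I would cite it from \cite{EM02}. With it, both (ii)(b) and (ii)(c) follow uniformly: a homomorphic image in $G$ of a cotorsion (or divisible) subgroup of $E$ vanishes, so the subgroup itself is $0$.
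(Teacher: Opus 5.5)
Your proof is correct. The paper states this Fact without proof, treating it as standard, so there is no argument of the paper's to compare routes with; your write-up supplies one.

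Part (i) is complete and elementary. In part (ii), the one external input you cite is that homomorphic images of cotorsion groups are cotorsion. This follows directly from the paper's definition, with no torsion-freeness of the image needed. If $A\to B$ is onto and $J$ is torsion-free, the long exact sequence gives that $\Ext(J,A)\to\Ext(J,B)$ is onto, because $\Ext^2_{\mathbb Z}$ vanishes.

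Two small remarks follow.

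First, the heuristic in your obstacle paragraph is actually right, and it gives a citation-free proof of (ii)(c). Suppose the kernel $K$ is nonzero and $J_p/K$ is torsion-free.
\begin{enumerate}
\item[(1)] Pick $0\neq k=p^nu\in K$ with $u$ a $p$-adic unit. Since $J_p/K$ is torsion-free, $u\in K$.
\item[(2)] Hence $K\not\subseteq pJ_p$. Because $pJ_p$ has index $p$ in $J_p$, this gives $J_p=K+pJ_p$, so $J_p/K$ is $p$-divisible.
\item[(3)] $J_p/K$ is also $q$-divisible for every prime $q\neq p$, because $J_p$ is.
\item[(4)] So the image, being isomorphic to $J_p/K$, is divisible and must vanish in the reduced group $G$.
\end{enumerate}
This is exactly where the hypothesis that $G$ is \emph{reduced} does work in your route.

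Second, once you have the image fact, you can bypass the characterization from the Remark entirely in (ii). Let $C\le\End(G)$ be any cotorsion subgroup and fix $x\in G$. The evaluation map $C\to G$, $\varphi\mapsto\varphi(x)$, has cotorsion image, hence zero image, since $G$ is cotorsion-free. As $x$ is arbitrary, $C=0$. This also shows that the hypothesis \emph{reduced} is redundant: divisible groups are injective, hence cotorsion, so a cotorsion-free group is automatically reduced.
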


	\section{Weak diamond and a realization theorem}

The main result of this section is Theorem \ref{d71}. We begin by recalling the necessary preliminaries on the weak diamond principle.

\begin{definition}\label{dwd}
	Suppose $\lambda$ is an uncountable regular cardinal and $S \subseteq \lambda$ is a stationary set.
	\begin{enumerate}
		\item The \emph{diamond on $S$}, denoted $\Diamond_\lambda(S)$, asserts the existence of a sequence $\langle S_\alpha \mid \alpha \in S \rangle$ such that for each $\alpha \in S$, $S_\alpha \subseteq \alpha$, and for every $X \subseteq \lambda$, the set $\{\alpha \in S \mid X \cap \alpha = S_\alpha \}$ is stationary in $\lambda$. We write $\Diamond_\lambda$ for $\Diamond_\lambda(\lambda)$.
		
		\item The \emph{weak diamond on $S$}, denoted $\Phi_\lambda(S)$, is the statement: ``For every $c: 2^{<\lambda} \to 2$, there exists $g: S \to 2$ such that for all $f: \lambda \to 2$, the set $\{\alpha \in S \mid c(f \upharpoonright \alpha) = g(\alpha) \}$ is stationary in $\lambda$.'' We write $\Phi_\lambda$ for $\Phi_\lambda(\lambda)$.
	\end{enumerate}
\end{definition}

\begin{remark}	\begin{enumerate}
		\item
It is easily seen that $\Diamond_{\lambda^+}$ implies $2^\lambda = \lambda^+$. In fact, by a celebrated theorem of Shelah \cite{shelah}, for all uncountable cardinals $\lambda$, $\Diamond_{\lambda^+}$ is equivalent to $2^\lambda = \lambda^+$.
\item
By \cite{devlin-shelah}, $2^\lambda < 2^{\lambda^+}$ implies $\Phi_{\lambda^+}$. It was later observed by Abraham and Baumgartner that $\Phi_{\lambda^+}$ implies $2^\lambda < 2^{\lambda^+}$. Thus $\Phi_{\lambda^+}$ and $2^\lambda < 2^{\lambda^+}$ are equivalent, and consequently $\Diamond_{\lambda^+}$ implies $\Phi_{\lambda^+}$.	\end{enumerate}	
\end{remark}

	\begin{definition} \label{y37} (\cite[Definition 0.7]{Sh:1028})
		Suppose $\cF \subseteq {}^S X$ is a family of functions from $S$ into $X$, $J$ is an ideal on $S$, and $\theta$ is a cardinal.
		\begin{enumerate}
			\item We say $\cF$ is \emph{$(\theta, J)$-free} if for every $\cF' \subseteq \cF$ of cardinality $< \theta$, there is a sequence $\langle w_\eta : \eta \in \cF' \rangle$ such that:
			\begin{enumerate}
				\item $\eta \in \cF' \Rightarrow w_\eta \in J$, and
				\item if $\eta_1 \neq \eta_2 \in \cF'$ and $s \in S \setminus (w_{\eta_1} \cup w_{\eta_2})$, then $\eta_1(s) \neq \eta_2(s)$.
			\end{enumerate}
			\item We say $\cF$ is \emph{$\theta$-free} if it is $(\theta, J)$-free where $S \subseteq \Ord$ and $J = J^{\bd}_S$ is the ideal of bounded subsets of $S$.
		\end{enumerate}
	\end{definition}

	\begin{definition}\label{d65}
		Let $\kappa$ be a regular cardinal.
		\begin{enumerate}
			\item We say
			\(
			\boldsymbol{f}:= (\chi_{\boldsymbol{f}}, \zeta_{\boldsymbol{f}}, R_{\boldsymbol{f}}, \langle G^\ast_i: i \leq \kappa \rangle, \langle H^\ast_\zeta: \zeta < \zeta_{\boldsymbol{f}} \rangle)
			\)
			is a $\kappa$-frame whenever:
			\begin{enumerate}
				\item[(a):]
				
				$\chi_{\boldsymbol{f}}:=\chi \geq \kappa$ and $\zeta_{\boldsymbol{f}}$ are cardinals,
				\item[(b):] $R:=R_{\boldsymbol{f}}$ is a ring of cardinality $\leq \chi$ with $1$,
				\item[(c):]
				\begin{enumerate}
                   \item[$(c_1)$] $G^\ast_i$ is an $R$-module of cardinality $\leq \chi$ for all $i \leq \kappa$,
					\item[$(c_2)$] the sequence $\langle G^\ast_i:i\leq\kappa\rangle$ is strictly increasing and continuous,
                   \item[$(c_3)$] $G^\ast_i$ and $G^\ast_j/G^\ast_i$ are free for $i<j \leq \kappa,$
				\end{enumerate}
				\item[(d):] for all $\zeta<\zeta_{\boldsymbol{f}}$, $H^\ast_\zeta$ is an $R$-module, $G^\ast_\kappa \subseteq H^\ast_\zeta$ and $H^\ast_\zeta$ is of size $\leq \chi$,
				\item[(e):] if $i<\kappa$ and $\zeta<\zeta_{\boldsymbol{f}}$, then $H^\ast_\zeta/ G^\ast_i$ is a free $R$-module, hence $G^\ast_i$ is a direct summand of $H^\ast_\zeta$. In particular,
				the following hold:		\begin{enumerate}
					\item[$(e_1)$] By the following splitting short exact sequence $$0\longrightarrow G^\ast_i\longrightarrow H^\ast_\zeta\longrightarrow H^\ast_\zeta/ G^\ast_i\longrightarrow 0,$$ the module $H^\ast_\zeta$ is free.
					\item[$(e_2)$]  The direct limit of the following directed system:
					$${\vil}_{i<\kappa}[\cdots\lo H^\ast_\zeta/ G^\ast_i  \lo H^\ast_\zeta/ G^\ast_{i+1}\lo \cdots]\cong H^\ast_\zeta/ G^\ast_\kappa, $$ is flat as an $R$-module.
				\end{enumerate}
				
			\end{enumerate}
			\item A frame $\boldsymbol{f}$ is called \emph{nice} if in addition to the first part, it is equipped with
			\[
			\big(\textbf{I}_0, \langle \textbf{I}_{i+1}: i < \kappa \rangle, \langle G^\ast_{\kappa} \stackrel{g_\zeta} \lo R \times R  : \zeta < \zeta_{\boldsymbol{f}} \rangle, \bold{S}, \langle d_\zeta: \zeta < \zeta_{\boldsymbol{f}}   \rangle \big)
			\]
			satisfying the following properties:
			\begin{enumerate}
				\item[(f)] there is no nontrivial homomorphism from $\frac{(H^\ast_\zeta,+)}{(G^\ast_\kappa,+)}$ into $(R,+)$, or equivalently into $(H^\ast_\zeta,+)$, since $H^\ast_\zeta$ is free; see $(e_1)$.
				\item[(g)] $\textbf{I}_0$ is a free basis of $G^\ast_0$ and for each $i<\kappa$, $\textbf{I}_{i+1}$ is a free basis of $G^\ast_{i+1}$ over $G^\ast_{i}$. In particular, $G^\ast_{0}$ and $G^\ast_{i+1}/G^\ast_{i}$ are free as $R$-modules.
				\item[(h)]
\begin{enumerate}
\item[$(h_1)$] $\bold{S}:=(R\times R)\setminus\{(0,0)\}$, and the elements $d_\zeta:=(s_{\zeta,1},s_{\zeta,2})\in\bold{S}$ are chosen so that $\bold{S}=\{d_\zeta:\zeta<\zeta_{\boldsymbol f}\}$.

\item[$(h_2)$] $g_\zeta$ is a homomorphism from $G^\ast_{\kappa}$ into $R{d_\zeta} \subseteq R \times R$.
\end{enumerate}
\iffalse
				\item[(i)] if $d:=d_\zeta \in \bold{S}$, then there is no nonzero homomorphism $h$ from $(H^\ast_{\zeta},+)$ into $(R,+)^2:=(R\oplus R,+)$ extending $g_\zeta$:
				\[
				\xymatrix{
					& 0 \ar[r] & G^\ast_{\kappa} \ar[r]^{\subseteq} \ar[d]_{g_\zeta} & H^\ast_{\zeta} \ar[dl]^{\nexists h} \\
					&& R\times R. &
				}
				\]
\fi
\item[(i)] if $\zeta<\zeta_{\boldsymbol f}$ and $\tau:(Rd_\zeta,+)\to (R^2,+)$ is an additive homomorphism
such that $\tau(d_\zeta)\neq0$, then $\tau\circ g_\zeta:G_\kappa^*\to R^2$ does not extend to an additive homomorphism
$H_\zeta^*\to R^2:$	\[
\xymatrix{
	& 0 \ar[r] & G^\ast_{\kappa} \ar[r]^{\subseteq} \ar[d]_{g_\zeta} & H^\ast_{\zeta} \ar[ddl]^{\nexists } \\
	&& Rd_\zeta\ar[d]_{\tau}\\
	&& R\times R&
}
\]
	\end{enumerate}
		\end{enumerate}
	\end{definition}
We now present some cardinals $\kappa$ for which a nice $\kappa$-frame exists.

\begin{proposition}\label{d68plus}
Assume that $R$ is a ring and that $(R,+)$ is a cotorsion-free
abelian group. Put
$
\chi=|R|+\aleph_0.
$
Then there exists a nice $\aleph_0$-frame
\[
\boldsymbol f=
\bigl(
\chi,\zeta_{\boldsymbol f},R,
\langle G_i^*:i\leq\omega\rangle,
\langle H_\zeta^*:\zeta<\zeta_{\boldsymbol f}\rangle
\bigr).
\]
Moreover, every module appearing in the frame has cardinality at
most $\chi$.
\end{proposition}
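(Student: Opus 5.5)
The plan is to build the frame out of a countable free module together with one "Corner-type" test module for each $d_\zeta$. The divisibility-producing relations $y_n=k_ny_{n+1}+\cdots$, with $k_n=n+2$, will give $(f)$, and a twist by a suitable $\mathbb Z$-adic integer will give $(i)$.

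\textbf{The chain.} Let $X=\bigoplus_{m<\omega}Rx_m$ and $G^*_n=\bigoplus_{m<n}Rx_m$ for $n<\omega$, with $G^*_\omega=X$. Take $\mathbf I_0=\emptyset$ (or $\{x_0\}$, after shifting indices) and $\mathbf I_{n+1}=\{x_n\}$. Then $(c_1)$–$(c_3)$ and $(g)$ are immediate, continuity at $\omega$ is trivial, and all cardinalities are $\le\chi$.

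\textbf{The test modules.} Enumerate $\mathbf S=(R\times R)\setminus\{(0,0)\}$ as $\langle d_\zeta:\zeta<\zeta_{\boldsymbol f}\rangle$. For each $\zeta$ fix a sequence $\ell^\zeta=\langle \ell^\zeta_m:m<\omega\rangle$ of integers, to be specified below. Let $H^*_\zeta$ be the $R$-submodule of the $\mathbb Z$-adic completion $\widehat X$ generated by $X$ and the elements
\[
y^\zeta_n=\sum_{m\ge n}\Bigl(\prod_{n\le j<m}k_j\Bigr)x_m ,
\]
so that $y^\zeta_n=k_ny^\zeta_{n+1}+x_n$. (At this stage $y^\zeta_n$ does not depend on $\zeta$; the dependence on $\zeta$ enters only through $g_\zeta$.) Since $x_n=y_n-k_ny_{n+1}$, the module $H^*_\zeta$ is generated by $\{y_n:n<\omega\}$, and these are $R$-independent: a relation would give a relation among finitely many $x_m$ modulo $\prod_j k_j\widehat X$ for arbitrarily large products. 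Hence $H^*_\zeta$ is free on $\{y_n\}$.

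Modulo $G^*_n=\langle y_k-k_ky_{k+1}:k<n\rangle$, the module is free on $\{y_m:m\ge n\}$. This gives $(e)$, and $(e_1)$, $(e_2)$ follow formally.

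For $(f)$: in $H^*_\zeta/G^*_\omega$ we have $y_n=(n+2)y_{n+1}$, so every element is divisible by every integer. The image of this group under an additive map into $(R,+)$ is therefore a divisible subgroup, and it is $0$ because $(R,+)$ is cotorsion-free.

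\textbf{The homomorphism $g_\zeta$ and condition $(i)$.} Define $g_\zeta(x_m)=\ell^\zeta_m d_\zeta$ and extend $R$-linearly. Suppose $\tau$ is additive on $Rd_\zeta$ with $e:=\tau(d_\zeta)\neq0$, and $h\colon H^*_\zeta\to R^2$ extends $\tau\circ g_\zeta$. Iterating $y_0=k_0y_1+x_0$ gives
\[
h(y_0)-\Bigl(\sum_{m<n}\Bigl(\prod_{j<m}k_j\Bigr)\ell_m\Bigr)e\in\Bigl(\prod_{j<n}k_j\Bigr)R^2 \quad\text{for all } n .
\]
So $h(y_0)=\pi e$, where $\pi=\sum_m(\prod_{j<m}k_j)\ell_m\in\widehat{\mathbb Z}$ is written in factorial expansion. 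Here we use that $R^2$ is $\mathbb Z$-adically Hausdorff, which holds because a cotorsion-free group has no divisible subgroup, and we argue that $\bigcap_n n!R^2=0$ via reducedness.

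So it suffices to choose the digits $\ell_m\in\{0,1\}$ so that for every nonzero $e\in R^2$ the $\mathbb Z$-adic limit $\pi e$ does not exist in $R^2$. For fixed $e$, let $A_e=\{\pi\in\widehat{\mathbb Z}:\pi e \text{ exists in } R^2\}$. This is a subgroup, and $\pi\mapsto\pi e$ is a homomorphism $A_e\to R^2$. Since $\widehat{\mathbb Z}$ is cotorsion and $R^2$ is cotorsion-free, we get $A_e\neq\widehat{\mathbb Z}$; more precisely, the standard Step Lemma gives that $A_e$ is "small" (no pure copy of a $p$-adic group).

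\textbf{Main obstacle.} The hard step is producing a single $\pi$ that avoids $A_e$ for all $\chi$ many $e$ simultaneously. When $\chi<2^{\aleph_0}$ a counting argument works: the $\{0,1\}$-digit expansions give $2^{\aleph_0}$ candidates, and each $A_e$ meets these in fewer than $2^{\aleph_0}$ by a Step-Lemma argument. In general I would first try to exploit the fact that only $e\in\tau(Rd_\zeta)$ matter and that $\ell^\zeta$ may depend on $\zeta$. Failing that, I would replace the single chain of $y$'s by $|R|^+$ branches, one for each candidate $\pi$, inside $H^*_\zeta$, and show that for every $e$ some branch is killed by the Step Lemma, while keeping $H^*_\zeta/G^*_n$ free.
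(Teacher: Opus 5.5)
\textbf{Gap: a single test chain per $\zeta$ cannot satisfy clause (i).}
Your checks of clauses (c), (e) and (f) are correct. So is your reduction of (i) to the $\mathbb Z$-adic limit $h(y_0)=\pi e$ with $e=\tau(d_\zeta)$; these are the same block-level computations the paper uses. The gap is the step you call the main obstacle, and within your design it cannot be filled. With one chain per $\zeta$, clause (i) needs a single $\pi\in\widehat{\mathbb Z}$ such that $\pi e\notin R^2$ for every nonzero $e$ that can occur as $\tau(d_\zeta)$. Such a $\pi$ need not exist. Here is a counterexample:
\begin{itemize}
\item For $\pi\in\widehat{\mathbb Z}$, let $G_\pi$ be the pure subgroup of $\widehat{\mathbb Z}$ generated by $1$ and $\pi$. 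Put $G=\bigoplus_{\pi\in\widehat{\mathbb Z}}G_\pi$ and $R=\mathbb Z\oplus G$ with $G^2=0$. Then $(R,+)$ is a direct sum of countable reduced torsion-free groups, hence cotorsion-free.
\item Take $d_\zeta=(1,0)$ and any digit sequence, with value $\pi$. Put $e=(1_{G_\pi},0)$ and define $\tau(n+g,0)=ne$, so $\tau(d_\zeta)=e\neq0$.
\item The tails $\pi_n$ of the expansion satisfy $(n+1)!\,\pi_n=\pi-s_n\in G_\pi$, so $\pi_n\in G_\pi$ by purity.
\item Writing $r=n_r+g_r$, the assignment $ry_n\mapsto n_r\pi_ne$ is an additive extension of $\tau\circ g_\zeta$ to $H^*_\zeta$.
\end{itemize}
So no choice of $\ell^\zeta$ works. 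Your fallback does not fix this as stated. Using $|R|^+$ branches breaks the frame requirement $|H^*_\zeta|\leq\chi$. Indexing branches by candidate values of $\pi$ still leaves the covering problem unsolved.

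\textbf{Missing idea: index the test chains by the possible targets $b=\tau(d)$.}
Do not index by $\zeta$ or by $\pi$. For each $0\neq b\in A=R^2$, choose $a_b\in\widehat{\mathbb Z}$ with $a_bb\notin A$; this needs only cotorsion-freeness, as you already observed for $A_e$. Give each $b$ its own chain $H_b=\bigoplus_nRy_{b,n}$, with $x_{b,n}=y_{b,n}-k_ny_{b,n+1}$. Then set
\[
G^*_m=\bigoplus_{0\neq b\in A}\ \bigoplus_{n<m}Rx_{b,n},\qquad
H_d=\bigoplus_{0\neq b\in A}H_b,\qquad
g_d(x_{b,n})=\ell^b_n d,
\]
where $\ell^b_n$ are the factorial digits of $a_b$.
\begin{itemize}
\item If $\tau(d)=b_0\neq0$, run your computation on the single summand $H_{b_0}$. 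It gives $\Phi(y_{b_0,0})=a_{b_0}b_0$, which must lie in $A$, contradicting the choice of $a_{b_0}$.
\item There are at most $|R^2|\leq\chi$ blocks, so every module has size at most $\chi$.
\item Freeness of $H_d/G^*_m$ and divisibility of $H_d/G^*_\omega$ follow blockwise from the arguments you already gave.
\end{itemize}
Neither the digits nor $H_d$ depend on $d$; only $g_d$ does. What matters is that every possible target value has its own chain.
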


\begin{proof}
Put
$
A=(R,+)^2.
$
Since $(R,+)$ is cotorsion-free, so is $A$.
We first prepare, simultaneously for every nonzero element
$b\in A$, a countable test system.
For each
$
0\neq b\in A,
$
choose
$
a_b\in\widehat{\mathbb Z}
$
such that
$
a_b b\notin A,
$
where $A$ is regarded as a subgroup of its $\mathbb Z$-adic
completion.
Such an $a_b$ exists. Indeed, if
$
a_b b\in A$
for every $a\in\widehat{\mathbb Z},
$
then $\widehat{\mathbb Z}b$ would be a nonzero cotorsion subgroup
of $A$, contradicting the fact that $A$ is cotorsion-free.
For $n<\omega$ put
\[
k_n=n+2,
\qquad
K_0=1,
\qquad
K_{n+1}=k_nK_n=(n+2)!.
\]
Choose the factorial expansion
$
a_b=\lim_{m<\omega}s_m^b,
$
where
\[
s_m^b=
\sum_{n<m}\ell_n^bK_n,
\qquad
0\leq\ell_n^b<k_n.
\]
Thus
\(
s_{n+1}^b=s_n^b+\ell_n^bK_n\)
\(n<\omega.
\)
\begin{enumerate}
	\item[$\bullet$] For every $0\neq b\in A$, let
	\(
	H_b=\bigoplus_{n<\omega}Ry_{b,n}
	\)
	be a free $R$-module.
\end{enumerate}
Also, for each \(n<\omega\) define
\(
x_{b,n}=y_{b,n}-k_ny_{b,n+1}\).
Set
$
F_b=\bigoplus_{n<\omega}Rx_{b,n}
$
and, for $m<\omega$,
$
F_{b,m}=\bigoplus_{n<m}Rx_{b,n}.
$
We record some elementary properties of these modules.
First, the elements
$
\langle x_{b,n}:n<\omega\rangle
$
are $R$-linearly independent. Indeed, suppose
$
\sum_{n\leq N}r_nx_{b,n}=0.
$
Then comparison of the coefficient of $y_{b,0}$ gives
$r_0=0$; comparison of the coefficient of $y_{b,1}$ then gives
$r_1=0$, and continuing inductively gives
$
r_0=\cdots=r_N=0.
$
Hence $F_b$ is free on the displayed basis.
Second, for every $m<\omega$,
$
H_b/F_{b,m}
$
is free. Indeed, modulo $F_{b,m}$ we have
$
y_{b,n}=k_ny_{b,n+1}
~(n<m),
$
so that the generators
\(
\{y_{b,0},\ldots,y_{b,m-1}\}
\)
successively be eliminated. Consequently,
\(
H_b/F_{b,m}
\cong
\bigoplus_{n\geq m}Ry_{b,n}.
\)

\begin{enumerate}
	\item[$\bullet$] For $m<\omega$ put
	$G_m^*
	=
	\bigoplus_{0\neq b\in A}F_{b,m},$ and
	\item[$\bullet$] $G_\omega^*
	:=
	\bigoplus_{0\neq b\in A}F_b.$
\end{enumerate}

Then each $G_m^*$ is free, the sequence
$
\langle G_m^*:m\leq\omega\rangle
$
is increasing and continuous, and whenever
$m<n\leq\omega$,
$
G_n^*/G_m^*
$
is free.
We next construct the test module belonging to a prescribed
nonzero
$
d\in R^2.
$
\begin{enumerate}
	\item[$\bullet$] Let $
	H_d
	=
	\bigoplus_{0\neq b\in A}H_b.
	$
Thus $G_\omega^*\subseteq H_d$ naturally. 

	\item[$\bullet$] Define an $R$-homomorphism \(
	g_d:G_\omega^*\to Rd\subseteq R^2
	\)
	on the free basis of $G_\omega^*$ by
	\[
	g_d(x_{b,n})=\ell_n^b d
	\qquad
	(0\neq b\in A,\ n<\omega).
	\]
\end{enumerate}

We verify the frame requirements.
For every $m<\omega$,
\(
H_d/G_m^*
\cong
\bigoplus_{0\neq b\in A}
\bigl(H_b/F_{b,m}\bigr),
\)
and hence
$
H_d/G_m^*
$
is a free $R$-module.
This proves the analogue of clause~(e) of
Definition~\ref{d65}.
We next verify clause~\ref{d65}(f).
For fixed $b\neq0$, the quotient $H_b/F_b$ is the direct limit
\[
H_b/F_b
\cong
\varinjlim
\left(
R\xrightarrow{k_0}R
\xrightarrow{k_1}R
\xrightarrow{k_2}\cdots
\right).
\]
Since $k_n=n+2$, this direct limit is naturally isomorphic to
$
\mathbb Q\otimes_{\mathbb Z}R.
$
Therefore
\(
H_d/G_\omega^*
\cong
\bigoplus_{0\neq b\in A}
\bigl(\mathbb Q\otimes_{\mathbb Z}R\bigr).
\)
In particular, as an abelian group,
$H_d/G_\omega^*$ is divisible.
Since $(R,+)$ is cotorsion-free, it contains no nonzero divisible
subgroup. This, in turn, implies that
\[
(\ast)  \quad\quad \Hom_{\mathbb Z}(H_d/G_\omega^*,R)=0.
\]
It follows also that there is no nonzero additive homomorphism
\(
H_d/G_\omega^*\to H_d.
\)
Indeed, $H_d$ is a free $R$-module; if such a homomorphism were
nonzero, composition with a suitable coordinate projection
$H_d\to R$ would contradict $(\ast) $.

It remains to establish the strengthened non-extension property.
Fix
$
d\in R^2\setminus\{0\},
$
and let
$
\tau:(Rd,+)\to(A,+)
$
be additive with
$
b_0:=\tau(d)\neq0.
$
Suppose, toward a contradiction, that there exists an additive
homomorphism
\(
\Phi:(H_d,+)\to(A,+)
\)
satisfying
\(
\Phi\restriction G_\omega^*
=
\tau\circ g_d:
\) \[
\begin{CD}
G_\omega^* @>\subseteq>> (H_d,+)  \\
@Vg_dVV @VV\Phi V \\
Rd  @>\tau>> (A,+)
\end{CD}
\]
We consider only the summand $H_{b_0}$ of $H_d$.
For $n<\omega$ put
\(
u_n=\Phi(y_{b_0,n})\in A.
\)
Then
\(
x_{b_0,n}=y_{b_0,n}-k_ny_{b_0,n+1}.
\)
Therefore
\(
u_n-k_nu_{n+1}
=
\Phi(x_{b_0,n}).
\)
It follows that
\[
\Phi(x_{b_0,n})
=
\tau\bigl(g_d(x_{b_0,n})\bigr)
=
\tau(\ell_n^{b_0}d).
\]
Although $\tau$ need not be $R$-linear, it is an additive group
homomorphism. Since $\ell_n^{b_0}\in\mathbb Z$, we have
\(
\tau(\ell_n^{b_0}d)
=
\ell_n^{b_0}\tau(d)
=
\ell_n^{b_0}b_0.
\)
Hence
$
u_n
=
k_nu_{n+1}+\ell_n^{b_0}b_0.
$
Iterating this, we obtain, for every $m<\omega$,
\(
u_0
=
K_m u_m+s_m^{b_0}b_0.
\)
Indeed, this follows immediately by induction from
$K_{m+1}=k_mK_m$.
Now pass to the $\mathbb Z$-adic completion of $A$.
Since
$
K_m=(m+1)!,
$
we have
$
(K_m u_m) 
$ convergence to zero, in the $\widehat{\mathbb Z}$-adic toplogy
On the other hand,
$
\lim (s_m^{b_0}b_0)=
a_{b_0}b_0.
$
Hence
$
u_0=a_{b_0}b_0
$
in the completion of $A$.
But
$
u_0\in A,
$
whereas by our choice of $a_{b_0}$,
$
a_{b_0}b_0\notin A,
$
contradiction.
Thus no such $\Phi$ exists. We have proved:
\[
\tau(d)\neq0
\quad\Longrightarrow\quad
\tau\circ g_d
\text{ has no additive extension from }
G_\omega^*
\text{ to }H_d.
\]
This is precisely property $(i)$.
Finally enumerate
$
(R\times R)\setminus\{(0,0)\}
=
\{d_\zeta:\zeta<\zeta_*\},
$
where
$
\zeta_*:=
|(R\times R)\setminus\{(0,0)\}|,
$
and set
\[
\zeta_{\boldsymbol f}:=\zeta_*,
\qquad
H_\zeta^*:=H_{d_\zeta},
\qquad
g_\zeta:=g_{d_\zeta}.
\]
Let
$
\mathbf I_0=\varnothing
$
and take the obvious free bases of
$G_{m+1}^*/G_m^*$ for the sets $\mathbf I_{m+1}$.
Then clauses (a)--(i) of Definition~\ref{d65} follow from the
construction.
It remains only to check cardinalities.
Since
$
|A|=|R|
$
and each block $H_b$ is countably generated,
\[
|H_d|,
\ |G_m^*|,
\ |G_\omega^*|
\leq
|R|+\aleph_0
=
\chi.
\]
Thus all modules appearing in the frame have cardinality at most
$\chi$.
Therefore $\boldsymbol f$ is a nice $\aleph_0$-frame.
\end{proof}

We now establish the same result for the case $\kappa=\aleph_1$. This time we strengthen our hypothesis a bit and require $(R,+)$ to be torsionless.
We will need the following fact.
\begin{lemma}
\label{groups1}
There is
  an increasing and continuous chain $
  \langle C_i, ~i\leq\omega_1\rangle
  $ of free abelian groups, a free abelian
group $D$ containing $C_{\omega_1}$, and a homomorphism
$
e:C_{\omega_1}\to\mathbb Z
$
such that:
\begin{enumerate}
\item[(a)] $C_j/C_i$ is free for $i<j\leq\omega_1$;
\item[(b)] $D/C_i$ is free for every $i<\omega_1$;
\item[(c)]
$
\Hom_{\mathbb Z}(D/C_{\omega_1},\mathbb Z)=0;
$
\item[(d)] for every $0\neq m\in\mathbb Z$, the homomorphism
$
me:C_{\omega_1}\to\mathbb Z
$
does not extend to a homomorphism $D\to\mathbb Z$.
\end{enumerate}
Moreover, all the groups in this test may be chosen of rank at most
$\aleph_1$.
\end{lemma}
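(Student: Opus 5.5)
The plan is to build the whole test from a single non-free abelian group of size $\aleph_1$ with trivial dual, presented by a free resolution. The obvious alternative, stretching the $\aleph_0$-gadget of Proposition~\ref{d68plus} to length $\omega_1$, cannot work. A gadget $H_b\supseteq F_b$ whose quotient is $\mathbb Q$ involves only countably many generators $x_{b,n}$. All of these lie in some $C_i$ with $i<\omega_1$, and then $D/C_i$ contains a copy of $\mathbb Q$, which violates (b). So the non-freeness of $D/C_{\omega_1}$ must be spread cofinally along $\omega_1$.

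\emph{Step 1: the group $G$.} I would fix an abelian group $G$ of cardinality $\aleph_1$ with the following properties: $G$ is $\aleph_1$-free, $\Hom_{\mathbb Z}(G,\mathbb Z)=0$, and $G$ carries a continuous filtration $\langle G_i:i<\omega_1\rangle$ by countable free subgroups with $G/G_i$ $\aleph_1$-free for every $i$. Such groups exist in ZFC; I would quote the construction in Eklof--Mekler of $\aleph_1$-free groups of size $\aleph_1$ with trivial dual. If needed, I would realize it concretely from a ladder system $\langle\eta_\delta:\delta<\omega_1\text{ limit}\rangle$ together with the divisibility relations $y_{\delta,n}=k_ny_{\delta,n+1}+(\text{basis element at }\eta_\delta(n))$, which is exactly the relation pattern of Proposition~\ref{d68plus} placed along ladders. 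Next I take a free presentation $0\to C_{\omega_1}\to D\to G\to0$. Here $D$ is free on generators indexed by $\omega_1$, $C_{\omega_1}$ is free as a subgroup of a free group, and I set $C_i$ to be the relations supported below $i$. I choose the presentation compatibly with the filtration, so that $C_i$ is continuous and $C_j/C_i$ is free. With this choice, (c) is exactly $\Hom(G,\mathbb Z)=0$. Clause (b) then reduces to the freeness of the appropriate tail quotient, which follows from the filtration property: for each $i$ the quotient $D/C_i$ is an extension of $G$ by $C_{\omega_1}/C_i$, and it splits along the filtration above $i$.

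\emph{Step 2: the homomorphism $e$.} Apply $\Hom(-,\mathbb Z)$ to the presentation. Since $\Hom(G,\mathbb Z)=0$, we get an exact sequence
\[
0\to\Hom(D,\mathbb Z)\to\Hom(C_{\omega_1},\mathbb Z)\xrightarrow{\ \partial\ }\Ext(G,\mathbb Z)\to0 .
\]
Clause (d) says precisely that $\partial(me)\neq0$ for all $m\neq0$, that is, $\partial(e)$ has infinite order. So it suffices to know that $\Ext(G,\mathbb Z)$ has nonzero torsion-free rank. I would get this by a diagonalization in the style of the $\aleph_0$ case, working along the ladders. At each ladder $\delta$, choose $e$ on the basis elements $x_{\eta_\delta(n)}$ to be the factorial digits $\ell_n$ of a $\widehat{\mathbb Z}$-adic integer $a\notin\mathbb Q$. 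An extension $\Phi$ of $me$ would then satisfy $u_0=K_nu_n+m s_n$, exactly as in the proof of Proposition~\ref{d68plus}, and this forces $u_0=ma\in\mathbb Z$, which is a contradiction.

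\emph{Main obstacle.} The hardest step will be reconciling (b) with (c)--(d) in ZFC. Making $D/C_i$ free for all $i<\omega_1$ forces the ladders to be ``tree-like'', with ranges that are almost disjoint and handled via a filtration. Meanwhile, killing every homomorphism $G\to\mathbb Z$, rather than only those that a diamond-type guess would catch, is where ZFC arguments usually need extra care. I would first try to make the ladders pairwise disjoint by indexing basis elements by pairs $(\beta,\delta)$, and then verify freeness of $D/C_i$ via Pontryagin's criterion applied to the countable pieces. The rank bound in (c) holds because $D$, $C_i$ and $G$ are all generated by $\aleph_1$ elements.
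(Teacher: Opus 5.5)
Your proposal has a genuine gap at the decisive point, Step~2, and Step~1 has errors of its own.

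\textbf{Step~2.} Your reduction is correct: with $D$ free and $\Hom(G,\mathbb Z)=0$, clause (d) says exactly that $\partial(e)$ has infinite order in $\Ext(G,\mathbb Z)$. The ladder argument you offer for this does not work.

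In your own presentation, $e$ has to be defined on the relation module $C_{\omega_1}$. The elements $x_{\eta_\delta(n)}$ are free generators of $D$, not elements of $C_{\omega_1}$, and they lie on many ladders at once. Suppose you put $e(r_{\delta,n})=\ell_n$ on the relators $r_{\delta,n}=y_{\delta,n}-k_ny_{\delta,n+1}-x_{\eta_\delta(n)}$. An extension $\Phi$ of $me$ then only needs $\Phi(y_{\delta,n})=k_n\Phi(y_{\delta,n+1})+\Phi(x_{\eta_\delta(n)})+m\ell_n$. The unconstrained values $\Phi(x_{\eta_\delta(n)})$ absorb the digits: along one ladder take $\Phi(x_{\eta_\delta(n)})=-m\ell_n$ and $\Phi(y_{\delta,n})=0$.

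The computation in Proposition~\ref{d68plus} works only because the gadget $H_b\supseteq F_b$ has no such free generators. That is precisely the gadget you rightly excluded. If instead you meant $C_{\omega_1}$ to be spanned by the $x$'s, then $D$ is not free, and $D/C_i$ contains a copy of $\mathbb Q$ as soon as a ladder lies below $i$.

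What $\Hom(G,\mathbb Z)=0$ gives for free is torsion. From $0\to\mathbb Z\xrightarrow{p}\mathbb Z\to\mathbb Z/p\to0$ you get $\Hom(G,\mathbb Z/p)\hookrightarrow\Ext(G,\mathbb Z)$, and this is nonzero because $G\neq pG$. A class of infinite order in a single $\Ext(G,\mathbb Z)$ is a far more delicate, set-theoretically sensitive matter.

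The missing idea is the paper's: do not ask one group to handle every $m$.
\begin{itemize}
\item For each $m\neq0$, pick $\varepsilon_m$ whose class $\partial(\varepsilon_m)$ has prime order $p\nmid m$, so that $m\varepsilon_m$ does not extend. The paper phrases this as normalizing a non-extendable $h_m$ into $m\mathbb Z$, which is possible since $\Ext(G,\mathbb Z)$ is divisible for torsion-free $G$.
\item Take the direct sum of the $m$-indexed configurations and put $e=\bigoplus_m\varepsilon_m$. The $m$-th summand witnesses (d) for $m$.
\end{itemize}
In your language: replace $G$ by $G^{(\omega)}$. Since $\Ext(G^{(\omega)},\mathbb Z)\cong\Ext(G,\mathbb Z)^{\omega}$, a sequence of classes of pairwise distinct prime orders has infinite order.

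\textbf{Step~1.} Suppose $G$ has a continuous filtration by countable $G_i$ with $G/G_i$ $\aleph_1$-free for \emph{every} $i$. Then each $G_{i+1}/G_i$ is free, so $G$ is free. This contradicts $\Hom(G,\mathbb Z)=0$, since $G\neq0$ is forced by (d). Likewise $0\to C_{\omega_1}/C_i\to D/C_i\to G\to0$ cannot split, because $G$ would then be a summand of the free group $D/C_i$.

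Neither assumption is needed; your own choice of $C_i$ already works. Let $D_i$ be spanned by the generators of index $<i$, let $D^{\geq i}$ be spanned by the rest, and set $C_i=C_{\omega_1}\cap D_i$. Then $D/C_i\cong D_i/C_i\oplus D^{\geq i}$. Here $D_i/C_i$ is isomorphic to the countable subgroup of $G$ generated by the first $i$ generators, so mere $\aleph_1$-freeness of $G$ gives (b). Clause (a) follows since $C_j/C_i\le D/C_i$.

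Your fallback realization of $G$ does not have trivial dual. For the relations $y_{\delta,n}=k_ny_{\delta,n+1}+x_{\eta_\delta(n)}$, send one $x_\alpha$ to $1$, every other $x_\beta$ to $0$, and each $y_{\delta,n}$ to its finite tail sum. This is a nonzero homomorphism to $\mathbb Z$. Making the ladders disjoint even makes $G$ free. So an $\aleph_1$-free $G$ of size $\aleph_1$ with $\Hom(G,\mathbb Z)=0$ must be imported from the literature. That is on a par with the paper's appeal to a well-known fact.
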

\begin{proof}
  For each
$0\neq m\in\mathbb Z$, take, for every \(i\leq\omega_1\), free abelian groups
\(
C_i^{(m)}\)
and
\(D_m\supseteq C_{\omega_1}^{(m)},
\)
with a homomorphism
\(
h_m:C_{\omega_1}^{(m)}\to\mathbb Z
\)
satisfying the analogues of (a)--(c), such that $h_m$ does not
extend to $D_m$. We may normalize things so that
\(
h_m(C_{\omega_1}^{(m)})\subseteq m\mathbb Z.
\)
The existence of such groups is a well-known fact.
Thus
\(
\varepsilon_m:=\frac{1}{m}h_m:
C_{\omega_1}^{(m)}\to\mathbb Z
\)
is a well-defined homomorphism.  Put
\(
C_i:=\bigoplus_{m\neq0}C_i^{(m)}\)
and
\(D:=\bigoplus_{m\neq0}D_m,
\)
and define
$
e\restriction C_{\omega_1}^{(m)}=\varepsilon_m.
$
If $me$ extended to $D$, then its restriction to the $m$-th summand
$D_m$ would extend
$
m\varepsilon_m=h_m,
$
which is impossible.  The other stated properties follow by taking
direct sums.
\end{proof}
\begin{proposition}\label{d68-aleph1-plus-torsionless}
Assume that $R$ is a ring such that $(R,+)$ is torsionless.
Put
$
\chi=|R|+\aleph_1.
$
Then there exists a nice $\aleph_1$-frame
\[
\boldsymbol f=
\bigl(
\chi,\zeta_{\boldsymbol f},R,
\langle G_i^*:i\leq\omega_1\rangle,
\langle H_\zeta^*:\zeta<\zeta_{\boldsymbol f}\rangle
\bigr)
\]
such that all the modules occurring in the frame are
$\leq\aleph_1$-generated.
\end{proposition}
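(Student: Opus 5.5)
Tensor the abelian-group test system of Lemma~\ref{groups1} with $R$, and use torsionlessness of $(R,+)$ to reduce the non-extension clause (i) to clause (d) of that lemma, the way Proposition~\ref{d68plus} used cotorsion-freeness to reduce it to the failure of $a_b b\in A$.

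\textbf{Construction.} Fix $\langle C_i:i\le\omega_1\rangle$, $D$ and $e$ as in Lemma~\ref{groups1}. Put
\[
A=(R,+)^2 .
\]
For each $0\neq b\in A$ take a copy $C_i^b, D^b, e^b$. Set
\[
G_i^*=\bigoplus_{0\neq b\in A}R\otimes_{\mathbb Z}C_i^b,\qquad H_d=\bigoplus_{0\neq b\in A}R\otimes_{\mathbb Z}D^b
\]
for $0\neq d\in R^2$. Define $g_d$ on the summand indexed by $b$ by
\[
g_d(r\otimes c)=r\,e^b(c)\,d .
\]
This is $R$-linear into $Rd$.

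\textbf{Frame axioms.} The axioms (a)--(e) are routine because $R\otimes_{\mathbb Z}-$ preserves freeness, direct sums and continuity of chains. Since $D$ is free, the sequences $0\to C_i\to D\to D/C_i\to0$ stay exact after tensoring. Hence $(R\otimes C_j)/(R\otimes C_i)\cong R\otimes(C_j/C_i)$ is free, and likewise $H_d/G_i^*$ is free. Strictness of the chain is inherited from $C_i$, and we may assume $C_i\ne C_{i+1}$. The bases $\mathbf I_0,\mathbf I_{i+1}$ come from bases of $C_0$ and of $C_{i+1}/C_i$. All modules are $\le\aleph_1$-generated, since $|A|\le|R|$ and each block is countable-rank or $\aleph_1$-rank.

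\textbf{Clause (f).} We need
\[
\Hom_{\mathbb Z}\bigl(R\otimes(D/C_{\omega_1}),R\bigr)=0 .
\]
Suppose $\varphi\neq0$. Torsionlessness gives $\psi\in\Hom(R,\mathbb Z)$ with $\psi\varphi\neq0$. So some $r\otimes x$ has nonzero image. Then $x\mapsto\psi\varphi(r\otimes x)$ is a nonzero map $D/C_{\omega_1}\to\mathbb Z$, contradicting Lemma~\ref{groups1}(c). The same argument composed with coordinate projections handles maps into $H_\zeta^*$.

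\textbf{Clause (i).} Let $\tau:(Rd,+)\to A$ be additive with $b_0:=\tau(d)\ne0$, and suppose $\Phi:H_d\to A$ extends $\tau g_d$. Restrict to the elements $1\otimes D^{b_0}$. For $c\in C^{b_0}_{\omega_1}$ we get
\[
\Phi(1\otimes c)=\tau\bigl(e(c)d\bigr)=e(c)\,b_0 ,
\]
using that $e(c)\in\mathbb Z$ and $\tau$ is additive. Since $A$ is torsionless (a finite sum of torsionless groups), choose $\psi:A\to\mathbb Z$ with $m:=\psi(b_0)\neq0$. Then $x\mapsto\psi\Phi(1\otimes x)$ is a homomorphism $D\to\mathbb Z$ extending $me$, contradicting Lemma~\ref{groups1}(d).

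Finally enumerate $R^2\setminus\{0\}=\{d_\zeta\}$ and set $H_\zeta^*=H_{d_\zeta}$, $g_\zeta=g_{d_\zeta}$. The point needing most care is that the single functional $e$ must handle every multiple $m$. This is exactly why Lemma~\ref{groups1}(d) is stated for all $m\neq0$, and why torsionlessness (rather than mere cotorsion-freeness) is needed to produce an integer-valued functional not killing $b_0$.
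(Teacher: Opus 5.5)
\textbf{Verdict.} Your argument for clauses (f) and (i) is the paper's argument, and those steps are correct. You tensor the system of Lemma~\ref{groups1} with $R$. You then use a functional into $\mathbb Z$ supplied by torsionlessness to turn a putative extension into a homomorphism $D\to\mathbb Z$ extending $me$, or into a nonzero map $D/C_{\omega_1}\to\mathbb Z$.

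\textbf{The gap.} It is in the last sentence of your ``Frame axioms'' paragraph. You index the blocks by $0\neq b\in A$, so $G_i^*$ and $H_d$ are direct sums of $|A|=|R|$ copies. Hence $H_d$ is a free $R$-module of rank $|R|\cdot\operatorname{rk}(D)\geq|R|$. A free module of infinite rank $\nu$ over a nonzero ring cannot be generated by fewer than $\nu$ elements. So when $|R|>\aleph_1$, your modules are not $\leq\aleph_1$-generated, contrary to your claim and to what the proposition asserts. The bound $|A|\leq|R|$ only gives cardinality $\leq\chi$, not the generation bound.

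\textbf{The repair.} Drop the indexing altogether, as the paper does. Take $G_i^*=R\otimes_{\mathbb Z}C_i$, let $H_d^*$ be a copy of $R\otimes_{\mathbb Z}D$ over $G^*_{\omega_1}$, and set $g_d(r\otimes x)=r\,e(x)\,d$. Your clause (i) argument never uses that the block is the one labelled $b_0$; it works verbatim on a single block. This is because Lemma~\ref{groups1}(d) already handles every multiple $m=\psi(b_0)\neq0$, as you note at the end yourself. The per-$b$ copies were needed in Proposition~\ref{d68plus} only because there the test element $a_b$ depended on $b$.

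\textbf{A smaller point.} Injectivity of $R\otimes C_i\to R\otimes D$ does not follow from $D$ being free. Instead use that $(R,+)$ is torsion-free, hence flat over $\mathbb Z$. Alternatively, use that $D/C_i$ is free for $i<\omega_1$ and pass to the union at $\omega_1$.
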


\begin{proof}
Fix the groups $C_i, i \leq \omega_1$, the group $D \supseteq C_{\omega_1}$, and the homomorphism
$e:C_{\omega_1}\to\mathbb Z$ as in  Fact \ref{groups1}.
Set
$
Q=D/C_{\omega_1}.
$
We now extend scalars from $\mathbb Z$ to $R$.  
\begin{enumerate}
	\item[$\bullet$]  Put
	$
	G_i^*=R\otimes_{\mathbb Z}C_i$ for
	$i\leq\omega_1$
	and
	\item[$\bullet$] $
	H^*=R\otimes_{\mathbb Z}D.
	$
\end{enumerate}

Since $(R,+)$ is torsionless, it is torsion-free and hence flat over
$\mathbb Z$.  Therefore all the relevant inclusions remain
inclusions after tensoring.  Since $C_j/C_i$ is free abelian,
\(
G_j^*/G_i^*
\cong
R\otimes_{\mathbb Z}(C_j/C_i)
\)
is a free $R$-module whenever $i<j\leq\omega_1$.  Similarly,
\(
H^*/G_i^*
\cong
R\otimes_{\mathbb Z}(D/C_i)
\)
is a free $R$-module for every $i<\omega_1$.
We next verify the analogue of clause~\textup{(f)} of
Definition~\ref{d65}.  We claim that
$
\Hom_{\mathbb Z}
\bigl(R\otimes_{\mathbb Z}Q,R\bigr)=0.
$
Suppose otherwise, and let
\(
\varphi:R\otimes_{\mathbb Z}Q\to R
\)
be a nonzero additive homomorphism.  Choose
$
x=\sum_{j=1}^n r_j\otimes q_j
$
such that $\varphi(x)\neq0$.  Since $(R,+)$ is torsionless, there
exists an additive homomorphism
$
p:R\to\mathbb Z
$
such that
$
p(\varphi(x))\neq0.
$
For each $j\leq n$, define
\(
\psi_j:Q\to\mathbb Z\)
by the assignment $q\mapsto p\bigl(\varphi(r_j\otimes q)\bigr)$.
Each $\psi_j$ is an additive homomorphism, and
\(
0\neq p(\varphi(x))
=\sum_{j=1}^n\psi_j(q_j).
\)
Thus at least one $\psi_j$ is nonzero, contradicting
$
\Hom_{\mathbb Z}(Q,\mathbb Z)=0.
$
So,
$
\Hom_{\mathbb Z}
\bigl(H^*/G_{\omega_1}^*,R\bigr)=0,
$
and since $H^*$ is a free $R$-module,  there is
no nonzero additive homomorphism from
$
H^*/G_{\omega_1}^*$ into
$ H^*$.
We now introduce the different values $d\in R^2\setminus\{0\}$.
\begin{enumerate}
	\item[$\bullet$]  For every such $d$, let $H_d^*$ be a copy of $H^*$ containing the
	common submodule $G_{\omega_1}^*$. 
	\item[$\bullet$] Define the $R$-homomorphism
	$
	g_d:G_{\omega_1}^*\to Rd\subseteq R^2
	$
	by
	$
	g_d(r\otimes x)=r\,e(x)d$
	for $r\in R,\ x\in C_{\omega_1}.
	$
\end{enumerate}

We prove the non-extension property $(i)$.  Let
$
\tau:(Rd,+)\to(R^2,+)
$
be additive and suppose
$
b:=\tau(d)\neq0.
$
Since $(R,+)$ is torsionless, so is $(R^2,+)$.  Hence there exists
an additive homomorphism
$R^2\stackrel{p_b}\longrightarrow\mathbb Z
$
such that
$
m:=p_b(b)\neq0.
$
Suppose, toward a contradiction, that there is a map
\((H_d^*,+)\stackrel{\Phi}\longrightarrow(R^2,+)
\)
making the following diagram commute:
\[
\begin{CD}
G_{\omega_1}^* @>\subseteq>> H_d^* \\
@Vg_dVV @VV\Phi V \\
(Rd,+) @>\tau>> R^2
\end{CD}
\]

Consider the natural additive embedding
\(
j:D\to H_d^*\) defined by
\(
j(y)=1_R\otimes y.
\)
Then
\(
p_b\circ\Phi\circ j:D\to\mathbb Z
\)
is an additive homomorphism.  If $x\in C_{\omega_1}$, then by
the additivity of $\tau$,
\[
\begin{aligned}
(p_b\circ\Phi\circ j)(x)
 &=p_b\bigl(\tau(g_d(1_R\otimes x))\bigr)=p_b\bigl(\tau(e(x)d)\bigr)\\
 &=p_b(e(x)b)=m e(x).
\end{aligned}
\]
Thus $p_b\circ\Phi\circ j$ extends
$
me:C_{\omega_1}\to\mathbb Z
$
to $D$, contradicting clause~\textup{(d)} of Lemma \ref{groups1}.
Therefore $\tau\circ g_d$ has no additive extension from
$G_{\omega_1}^*$ to $H_d^*$.
Finally enumerate
$
(R\times R)\setminus\{(0,0)\}
=
\{d_\zeta:\zeta<\zeta_*\},
$
put
\[
\zeta_{\boldsymbol f}:=\zeta_*,
\qquad
H_\zeta^*:=H_{d_\zeta}^*,
\qquad
g_\zeta:=g_{d_\zeta},
\]
and choose the bases $\mathbf I_0$ and $\mathbf I_{i+1}$ from the
free $R$-modules $G_0^*$ and $G_{i+1}^*/G_i^*$, respectively.
The preceding construction verifies clauses~\textup{(a)}--\textup{(i)}
of Definition~\ref{d65}.
Since $D$ and every $C_i$ have rank at most $\aleph_1$, their scalar
extensions are generated by at most $\aleph_1$ elements as
$R$-modules.  Hence all modules occurring in the frame are
$\leq\aleph_1$-generated and have cardinality at most
$
|R|+\aleph_1=\chi.
$
This completes the proof.
\end{proof}

	\begin{hypothesis}\label{hp71}
Assume that:
\begin{enumerate}
\item[(a)] $\boldsymbol f$ is a nice $\kappa$-frame and
$|R_{\boldsymbol f}|\leq\chi$;
\item[(b)] $\lambda$ is an uncountable regular cardinal and
$\chi<\theta\leq\lambda$;
\item[(c)] $\bar S=\langle S_i:i<\lambda\rangle$ is a sequence of
pairwise disjoint stationary subsets of
$\{\alpha<\lambda:\cf(\alpha)=\kappa\}$, with union $S$;
\item[(d)] $\Phi_\lambda(S_i)$ holds for every $i<\lambda$;
\item[(e)] $\bar\eta=\langle\eta_\alpha:\alpha\in S\rangle$ is
$\theta$-free, each $\eta_\alpha:\kappa\to\alpha$ is increasing,
and one of the following coding bounds holds:
\begin{enumerate}
\item[$(e_1)$] if
\(
U=\bigcup\{\operatorname{ran}(\eta_\alpha):\alpha\in S\}\)
 {and} \(\mu=|U|,
\)
then $\mu^{<\kappa}<\lambda$;
\item[$(e_2)$]
\(
\mu=
\left|\{\eta_\alpha\upharpoonright i:
\alpha\in S,\ i<\kappa\}\right|<\lambda.
\)
\end{enumerate}
\end{enumerate}
In either case the tree
\(
\mathcal T=\{\eta_\alpha\upharpoonright i:
\alpha\in S,\ i<\kappa\}
\)
has cardinality $<\lambda$.  This is the precise smallness
condition used in the weak-diamond coding below.
\end{hypothesis}

	\begin{notation}
		Assuming the above hypotheses, we set $R := R_{\boldsymbol{f}}$, and we let
	\[
	\cT := \{\eta \rest i : \eta \in \bar\eta, \; i < \kappa\}.
	\]	Let $\rho \in 	\cT$. By $\lg(\rho)$ we mean the length of $\rho$.
	\end{notation}
We now fix the initial data for the construction. This involves specifying a family of modules $\{M_\rho : \rho \in \cT\}$ indexed by the tree $\cT$, along with a compatible system of embeddings $\{h^\ast_\rho: \rho \in \cT\}$ from an increasing chain of modules $G^\ast_i$.
\begin{construction}\label{c310}
	\begin{enumerate}
	\item[(a)]	We start by constructing $M_\ast$, $\bar M_\ast$ and $\bar h^\ast$ so that the following assertions hold:
	\begin{enumerate}
		\item[$(a_1)$] $\bar M_\ast = \langle M_\rho : \rho \in \cT \rangle$,
		\item[$(a_2)$] $\bar h^\ast = \langle h^\ast_\rho : \rho \in \cT \rangle$,
		\item[$(a_3)$] $h^\ast_\rho$ is an isomorphism from $G^\ast_{\lg(\rho)}$ onto $M_\rho$, increasing with $\rho$. This property is summarized by the diagram:
		\[
		\xymatrix{
			&& G^\ast_{\lg(\rho)} \ar[d]_{\subseteq} \ar[r]^{h^\ast_\rho} & M_{\rho} \ar[d]^{\subseteq} \\
			&& G^\ast_{\lg(\varrho)} \ar[r]^{h^\ast_\varrho} & M_{\varrho},
		}
		\]
		when $\varrho$ is an extension of $\rho$.
		\item[$(a_4)$] $M_\rho \cap M_\varrho = M_{\rho \wedge \varrho}$ for each $\rho, \varrho$,
		\item[$(a_5)$] $M_\ast$ is the free sum of $\bar M_\ast$ naturally defined, i.e., $M_\ast = \bigoplus_{\rho \in \mathcal{T}} M_\rho$.
	\end{enumerate}
	
	In particular, for each $\eta \in \bar\eta$ we have the following diagram:
	\[
	\begin{CD}
	@. M_{\eta\rest 0} @>\subseteq>> M_{\eta\rest 1} @>\subseteq>> M_{\eta\rest 2} @>\subseteq>> M_{\eta\rest 3} @>\subseteq>> \ldots \\
	@. h^\ast_{\eta\rest 0} @AAA h^\ast_{\eta\rest 1} @AAA h^\ast_{\eta\rest 2} @AAA h^\ast_{\eta\rest 3} @AAA \cdots \\
	@. G^\ast_{0} @>\subseteq>> G^\ast_{1} @>\subseteq>> G^\ast_{2} @>\subseteq>> G^\ast_{3} @>\subseteq>> \ldots
	\end{CD}
	\]
For any $\eta \in \bar\eta$, we define
	\begin{itemize}
	\item[$(a_6)$]
	$M_\eta  := \bigcup_{i<\kappa} M_{\eta \rest i}, $
\item[$(a_7)$]  	$h^\ast_\eta := \bigcup_{i<\kappa} h^\ast_{\eta \rest i} : G^\ast_{\kappa} \longrightarrow M_\eta,$ where $(a_3)$ ensures that $h^\ast_\eta$ is well-defined.
	\end{itemize}
\item[(b)]	Let $\rho \in \mathcal{T}$ and suppose $\lg(\rho)$ is successor, say $\lg(\rho) = i+1$. Then the isomorphism $h^\ast_\rho: G^\ast_{i+1} \to M_\rho$ satisfies $h^\ast_\rho(G^\ast_i) = M_{\rho \upharpoonright i}$, i.e., we have the diagram
	\[
	\begin{CD}
	0 @>>> M_{\rho\rest i} @>\subseteq>> M_\rho @>>> \frac{M_\rho}{M_{\rho\rest i}} @>>> 0 \\
	@. h^\ast_\rho\rest @AAA h^\ast_\rho @AAA \exists f_i @AAA \\
	0 @>>> G^\ast_{i} @>\subseteq>> G^\ast_{i+1} @>>> \frac{G^\ast_{i+1}}{G^\ast_i} @>>> 0
	\end{CD}
	\]
	Thanks to the five lemma, the displayed diagram induces an isomorphism
	\(
	f_\rho : \frac{G^\ast_{i+1}}{G^\ast_i} \stackrel{\cong}{\longrightarrow} \frac{M_\rho}{M_{\rho\rest i}}.
	\)
	Now let $\textbf{I}_\rho$ be the image of $\textbf{I}_{i+1}$ under this isomorphism. Then $\textbf{I}_\rho$ is a free basis of $\frac{M_\rho}{M_{\rho\rest i}}$. Set also
	\begin{itemize}
		\item[$(b_1)$] $\textbf{I} := \bigcup\{\textbf{I}_\rho : \rho \in \mathcal{T} \text{ and } \lg(\rho) \text{ is not limit}\}$,
		\item[$(b_2)$] $Z := \{s_1 x_1 + s_2 x_2 : (s_1,s_2) \in (R\times R)\setminus\{(0,0)\}  \text{ and } x_1 \neq x_2 \in \textbf{I}\}$,
		\item[$(b_3)$]	$Z^\sharp:=
		\{rx+y:r\in R,\ x,y\in\boldsymbol I,\ x\neq y\}.$
	\end{itemize}
	
	\item[(c)]	
	\begin{enumerate}
		\item[$(c_1)$] For each $\eta \in \bar\eta$, $\zeta < \zeta_{\boldsymbol{f}}$, $\iota \in \{0, 1\}$ and $z \in M_\ast$ we choose $M^\ast_{\eta,\zeta,{z,\iota}}$ and $h^\ast_{\eta,\zeta,{z,\iota}}$ such that $h^\ast_{\eta,\zeta,{z,\iota}}$ is an isomorphism from $H^\ast_\zeta$ onto $M^\ast_{\eta,\zeta,{z,\iota}}$,
		
        \item[$(c_2)$] Call a pair $(\zeta,z)$ \emph{compatible} if
$d_\zeta=(s_1,s_2)$ and
\(
z=s_1x_1+s_2x_2
\)
for distinct $x_1,x_2\in\mathbf I$, in such a way that
\(
\sigma_{\zeta,z}:Rd_\zeta\to Rz,
\) defined by \(
\sigma_{\zeta,z}(a d_\zeta)=az
\)
is an $R$-isomorphism.  In particular, if
$z=rx+y\in Z^\sharp$ and $d_\zeta=(r,1)$, then
$(\zeta,z)$ is compatible: the second coordinate of $d_\zeta$
and the coefficient of $y$ in $z$ are both $1$, so both cyclic
modules are free of rank one.

        \item[$(c_3)$] For a compatible pair $(\zeta,z)$, we put
\(
g_{\zeta,z}:=\sigma_{\zeta,z}\circ g_\zeta:
G_\kappa^*\to Rz.
\)

        \item[$(c_4)$] Whenever $(\zeta,z)$ is compatible, the copies in
$(c_1)$ are chosen so that, for $\iota\in\{0,1\}$ and
$x\in G_\kappa^*$,
\(
h^*_{\eta,\zeta,z,\iota}(x)
:=h^*_\eta(x)+\iota g_{\zeta,z}(x).
\)
%Only compatible pairs are used in the diagonalization below.

        \item[$(c_5)$]
$M^*_{\eta_1,\zeta_1,z_1,\iota_1}\cap
 M^*_{\eta_2,\zeta_2,z_2,\iota_2}\subseteq M_*$ whenever
$\eta_1\neq\eta_2$, or $\zeta_1\neq\zeta_2$, or
$z_1\neq z_2$, or $\iota_1\neq\iota_2$.
	\end{enumerate}

	\item[(d)]	Let $M^\ast$ be the unique $R$-module such that
	\begin{itemize}
		\item[$(d_1)$] $M^\ast_{\eta,\zeta,{z,\iota}} \subseteq M^\ast$,
		\item[$(d_2)$] $\bigcup_{\eta,\zeta, z, \iota} M^\ast_{\eta,\zeta, z,\iota}$ generates $M^\ast$ freely over $M_\ast$.
	\end{itemize}

		\end{enumerate}
	
\end{construction}
	\begin{observation}\label{d80}
		\begin{enumerate}
\item[(i)] $M_\ast$ (see Construction~\ref{c310}$(a_5)$) is free with basis $\textbf{I}$.
			\item[(ii)] If $\Lambda \subseteq \bar\eta$ is of cardinality less than $\theta$, then $M_\Lambda := \sum_{\varrho \in \Lambda} M_\varrho \subseteq M_\ast$ is free, and $M_\ast / M_\Lambda$ is free.
		\end{enumerate}
	\end{observation}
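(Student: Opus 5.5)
For (i), I will build a basis of $M_\ast$ by transfinite induction along the tree $\cT$ ordered by length. The building blocks are $(a_5)$ and $(a_4)$ of Construction~\ref{c310}: $M_\ast=\bigoplus_\rho M_\rho$ is read as the direct limit (amalgamated sum) of the $M_\rho$ with $M_\rho\cap M_\varrho=M_{\rho\wedge\varrho}$.

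First, $M_{\langle\rangle}\cong G^\ast_0$ has basis $\textbf{I}_{\langle\rangle}$, the image of $\textbf{I}_0$. For a node $\rho$ of successor length $i+1$, the set $\textbf{I}_\rho$ is a basis of $M_\rho/M_{\rho\rest i}$ by construction (b). Lifting it to $M_\rho$ and adjoining a basis of $M_{\rho\rest i}$ gives a basis of $M_\rho$. For limit length the module is the union by continuity, $(c_2)$ of Definition~\ref{d65}, so nothing new is added. Thus $\bigcup_{\varrho\trianglelefteq\rho,\ \lg(\varrho)\text{ non-limit}}\textbf{I}_\varrho$ is a basis of $M_\rho$.

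Since $M_\ast$ is the free amalgam over meets, $\textbf{I}$ spans $M_\ast$. For independence, note that any finite relation lives in finitely many $M_\rho$. These generate the free amalgam of a finite subtree, and that amalgam has basis the union of the $\textbf{I}_\varrho$ along the subtree with no repetitions, because each $\textbf{I}_\varrho$ is attached only at the node $\varrho$.

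For (ii), write $\Lambda=\{\eta_\alpha:\alpha\in\Lambda'\}$ with $|\Lambda'|<\theta$. By $\theta$-freeness of $\bar\eta$ (Definition~\ref{y37}) there are $i_\alpha<\kappa$ such that the tails $\{\eta_\alpha\rest j: i_\alpha\le j<\kappa\}$ are pairwise disjoint for distinct $\alpha$. I will then aim to show that $\textbf{I}_\Lambda:=\bigcup\{\textbf{I}_\varrho:\varrho\trianglelefteq\eta\text{ for some }\eta\in\Lambda\}$ is a basis of $M_\Lambda$. Granting this, $M_\Lambda$ is free, and $M_\ast/M_\Lambda$ is free with basis the image of $\textbf{I}\setminus\textbf{I}_\Lambda$, since $\textbf{I}_\Lambda\subseteq\textbf{I}$ is a sub-basis.

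The key fact needed is that $M_\Lambda$ is exactly the span of $\textbf{I}_\Lambda$. Each $M_\eta=\bigcup_i M_{\eta\rest i}$ is spanned by the $\textbf{I}_\varrho$ with $\varrho\trianglelefteq\eta$ by the argument for (i), so $M_\Lambda$ is spanned by $\textbf{I}_\Lambda$, and independence is inherited from (i).

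The main obstacle is that $M_\ast$ is really built only from nodes of $\cT$, which have length $<\kappa$. So "basis of $M_\eta$" must be read through continuity, and I must check that the chosen lifts of the $\textbf{I}_\varrho$ are coherent: they are fixed once per node, independently of $\eta$. This coherence is what makes $\textbf{I}_\Lambda$ a genuine subset of $\textbf{I}$, and once it holds the $\theta$-freeness is not even strictly needed for this span argument. I expect the real subtlety to be only the identification of $\textbf{I}$ with lifted elements inside $M_\ast$, which (b) arranges.
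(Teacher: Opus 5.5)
Your argument is correct. The paper states this observation without proof, and your sub-basis argument is the natural justification. Transfinite induction along $\langle G^\ast_i:i\le\kappa\rangle$ gives $M_\rho$ the basis $\bigcup\{\textbf{I}_\varrho:\varrho\trianglelefteq\rho,\ \lg(\varrho)\text{ non-limit}\}$. Hence $M_\ast$ is free on $\textbf{I}$, and each $M_\eta$, so each $M_\Lambda$, is spanned by a subset of $\textbf{I}$.

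You are also right that neither $|\Lambda|<\theta$ nor the disjoint tails is used, so (ii) holds for every $\Lambda\subseteq\bar\eta$. The $\theta$-freeness of $\bar\eta$ only becomes essential in Lemma~\ref{d222}, after copies of $H^\ast_\zeta$ are attached and $H^\ast_\zeta/G^\ast_\kappa$ is no longer free.

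For independence in (i), the universal property of the amalgam is cleaner than your finite-subtree reduction. That reduction would need the subtree closed under meets, plus a check that its amalgam embeds in $M_\ast$. Instead, map each $M_\rho$ to $\bigoplus_{x\in\textbf{I}}Rx$ by sending basis elements to generators. These maps are compatible with the inclusions precisely because the lifts $\textbf{I}_\varrho=h^\ast_\varrho(\textbf{I}_{\lg(\varrho)})$ are fixed once per node. The induced map and the generator-to-element map are then mutually inverse.
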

	
	To give the next definition, let us recall that $\bar\eta = \langle \eta_\alpha: \alpha \in S \rangle$, where $S \subseteq \lambda$ is unbounded.
	With the basic building blocks $M^\ast_{\eta,\zeta,z,\iota}$ in hand, we now organize them into more complex submodules indexed by ordinals. These will allow us to systematically control the interactions between the different branches of the tree.
\begin{definition}\label{83}	\begin{enumerate}
		\item[(i)]
Let $\mathcal A$ be the set of all genuine attachment symbols
\(
\boldsymbol s=\langle\alpha,\iota,\zeta,z\rangle
\)
with $\alpha\in S$, $\iota\in\{0,1\}$,
$\zeta<\zeta_{\boldsymbol f}$, $z\in Z$, and $(\zeta,z)$ compatible
in the sense of Construction~\ref{c310}(c); for such a symbol put
\(
N_{\boldsymbol s}:=M^*_{\eta_\alpha,\zeta,z,\iota}.
\)
For each $\alpha\in S$ introduce in addition a null symbol
$\mathbf 0_\alpha$, and put $N_{\mathbf 0_\alpha}=0$.
	\item[(ii)]
For $\gamma\leq\lambda$, let $\mathcal A_\gamma$ be the set of
sequences
\(
\overline{\boldsymbol s}
 :=\langle\boldsymbol s_\beta:\beta\in S\cap\gamma\rangle
\)
such that, for every $\beta\in S\cap\gamma$, either
$\boldsymbol s_\beta=\mathbf 0_\beta$, or
$\boldsymbol s_\beta=\langle\beta,\iota,\zeta,z\rangle\in\mathcal A$.
Set
\(
M_{\overline{\boldsymbol s}}
 =M_*+\sum_{\beta\in S\cap\gamma}N_{\boldsymbol s_\beta}.
\)
	\item[(iii)]
We call $\overline{\boldsymbol s}$ from item (ii) \emph{coherent}, if for every
genuine coordinate
$\boldsymbol s_\beta=\langle\beta,\iota,\zeta,z\rangle$ there is a
finite set $u_\beta\subseteq S\cap\beta$ such that every basis
coordinate of $z$ is supported on branches
$\{\eta_\xi:\xi\in u_\beta\}$.  Equivalently, after enlarging the
finite support if necessary,
\(
z\in\sum_{\xi\in u_\beta}M_{\eta_\xi}\subseteq M_*.
\)
%Null coordinates impose no coherence requirement.	
\end{enumerate}

\end{definition}
\begin{fact}\label{unimodular}
	Every $z\in Z^\sharp$ is unimodular in $M_\ast$, in the sense that $Rz$ is a direct summand of $M_\ast$.
\end{fact}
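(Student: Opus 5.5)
The plan is to use Observation~\ref{d80}(i): $M_\ast$ is free with basis $\mathbf I$. Write $z=rx+y$ with $x\neq y\in\mathbf I$ and $r\in R$, and show directly that
\[
M_\ast = Rz\oplus\Big(Rx\oplus\bigoplus_{w\in\mathbf I\setminus\{x,y\}}Rw\Big).
\]
Once this holds, $Rz$ is a direct summand of $M_\ast$. It is also free of rank one, since $z$ has coefficient $1$ on the basis element $y$.

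The key step is the elementary change of basis $y\mapsto z=y+rx$, which is unimodular. Define an $R$-endomorphism $\phi$ of $M_\ast$ on the basis $\mathbf I$ by $\phi(y)=y+rx$ and $\phi(w)=w$ for $w\neq y$. Its inverse is the map $\psi$ with $\psi(y)=y-rx$ and $\psi(w)=w$ for $w\neq y$. To check this, compute
\[
\psi(\phi(y))=\psi(y)+r\psi(x)=y-rx+rx=y,
\]
using $x\neq y$ so that $\psi(x)=x$; the composite in the other order is checked the same way. Hence $\phi$ is an automorphism of $M_\ast$, and $\phi(\mathbf I)=(\mathbf I\setminus\{y\})\cup\{z\}$ is again a free basis of $M_\ast$.

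From the new basis the decomposition is immediate. The summand $Rz$ is one of the basis lines, and its complement is the span of $\mathbf I\setminus\{y\}$. Concretely, the projection $\pi\colon M_\ast\to Rz$ sends $m$ to $\lambda_y(m)\,z$, where $\lambda_y$ is the coefficient of $y$ with respect to $\mathbf I$. Since $\lambda_y(z)=1$, we get $\pi(z)=z$, so $\pi$ is an idempotent retraction onto $Rz$.

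There is essentially no obstacle here. The only points needing care are that $x\neq y$, which the definition of $Z^\sharp$ guarantees, and that the coefficient of $y$ is exactly the unit $1$. This is why the fact is stated for $Z^\sharp$ rather than for $Z$: for a general $s_1x_1+s_2x_2$ with neither coefficient a unit, it could fail.
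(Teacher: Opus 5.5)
Your proposal is correct and follows the same route as the paper: replace $y$ by $z=rx+y$ in the basis $\mathbf I$ of $M_\ast$, which gives $M_\ast=Rz\oplus\bigoplus_{w\in\mathbf I\setminus\{y\}}Rw$. You also write out two details the paper leaves implicit, namely the explicit inverse automorphism $\psi$ (showing the new set is a basis) and the retraction $m\mapsto\lambda_y(m)z$ onto $Rz$.
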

\begin{proof} Recall from Construction~\ref{c310}~$(b_3)$
	that $Z^\sharp=
	\{rx+y:r\in R,\ x,y\in\boldsymbol I,\ x\neq y\}.$
	Suppose
	$z=rx+y \in Z^\sharp.$
	Then
	$
	y=z-rx.$
	Hence replacing $y$ by $z$ in the basis $\boldsymbol I$ gives
	again a new $R$-basis
	\(
	\bigl(\boldsymbol I\setminus\{y\}\bigr)\cup\{z\}.
	\)
	Consequently
	\(
	M_\ast=Rz\oplus
	\bigoplus_{w\in\boldsymbol I\setminus\{y\}}Rw,
	\)
	and in particular $Rz$ is a direct summand of $M_\ast$, as required.
\end{proof}

\begin{lemma}\label{d222}
Let $\overline{\boldsymbol s}\in\mathcal A_\gamma$ be coherent,
where $\gamma\leq\lambda$.  Then
$M_{\overline{\boldsymbol s}}$ is $\theta$-free as an $R$-module.
More precisely, every subset of cardinality $<\theta$ is contained
in a free $R$-submodule supported by fewer than $\theta$ branches.
The construction can be made relative: if a free submodule $P_0$
has already been obtained at some stage of this support construction
and $X\subseteq M_{\overline{\boldsymbol s}}$ has size $<\theta$,
then there is a free $R$-submodule $Q$ such that
\[
P_0\cup X\subseteq Q\subseteq M_{\overline{\boldsymbol s}}
\quad\text{and}\quad
Q=P_0\oplus Q'
\]
for some free $R$-module $Q'$.  In particular, if
$z\in Z^\sharp$ and the finite predecessor support of $z$ has been
included, the construction may be started with the direct summand
$Rz$ of $M_*$.
\end{lemma}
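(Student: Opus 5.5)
Given $X\subseteq M_{\overline{\boldsymbol s}}$ with $|X|<\theta$, I first pass to a small set of branches. Each element of $M_{\overline{\boldsymbol s}}$ is a finite sum of elements of $M_*$ and of the attached blocks $N_{\boldsymbol s_\beta}=M^*_{\eta_\beta,\zeta,z,\iota}$. So I choose a set $\Lambda_0\subseteq S$ of size $<\theta$ that contains every $\beta$ whose block is used by some element of $X$, and also the finite coherence support $u_\beta$ of each such $\beta$. I close under $\beta\mapsto u_\beta$ and under the branch supports of elements of $X\cap M_*$, iterating $\omega$ times to get $\Lambda$ with $|\Lambda|<\theta$. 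Since $\theta>\chi$ is regular or at least $>\aleph_0$ here, the closure stays of size $<\theta$ (if $\theta$ is singular I would argue with $|\Lambda|^+$ instead). Then $X$ lies in $Q^+:=M_\Lambda+\sum_{\beta\in\Lambda,\ \boldsymbol s_\beta\ \text{genuine}}N_{\boldsymbol s_\beta}$, and it suffices to show that $Q^+$ is free, indeed free over $P_0$ in the relative version.

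I apply $\theta$-freeness of $\bar\eta$ (Definition~\ref{y37}) to $\{\eta_\beta:\beta\in\Lambda\}$. This gives $w_\beta<\kappa$ such that the tails $\eta_\beta\restriction[w_\beta,\kappa)$ are pairwise disjoint. I then well-order $\Lambda$ in the standard Shelah/Eklof way, so that each branch $\eta_\beta$ meets the union of the earlier ones only below some $j_\beta<\kappa$ with $j_\beta\ge w_\beta$; this is the usual ``$\theta$-free implies a good enumeration'' step, proved by induction on $|\Lambda|$. Adding branches one at a time, at stage $\beta$ the module grows by $M_{\eta_\beta}$ modulo $M_{\eta_\beta\restriction j_\beta}$ (using Construction~\ref{c310}$(a_4)$). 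This quotient is isomorphic to $G^*_\kappa/G^*_{j_\beta}$, which is free by $(c_3)$. If $\boldsymbol s_\beta$ is genuine, I instead add the block $N_{\boldsymbol s_\beta}$ over $M_{\eta_\beta\restriction j_\beta}$ together with $Rz$. By coherence, $z$ already lies in earlier branches, and the twisted copy $h^*_{\eta,\zeta,z,\iota}=h^*_\eta+\iota g_{\zeta,z}$ agrees with $h^*_\eta$ modulo $Rz\subseteq$ the earlier stage. So the quotient is isomorphic to $H^*_\zeta/G^*_{j_\beta}$, which is free by clause (e) of Definition~\ref{d65}. Construction~\ref{c310}$(c_5)$,(d) ensures that distinct blocks interact only inside $M_*$, so each successive quotient is exactly this. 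At limit stages the chain is continuous, so the union is free with each quotient free. This yields $Q^+$ free, and free over any initial segment. In particular it is free over $Rz$ by Fact~\ref{unimodular} when $z\in Z^\sharp$, after first listing the predecessor support of $z$.

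For the relative statement, $P_0$ was produced as an initial segment of such an enumeration for some $\Lambda'$. I extend the enumeration from $\Lambda'$ to $\Lambda\cup\Lambda'$ and get $Q=P_0\oplus Q'$ with $Q'$ the sum of the later free quotients (splitting, since they are free).

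The main obstacle is the genuine-coordinate step. I must check that $g_{\zeta,z}$ takes values in the earlier stage, which needs $z$ supported on earlier branches, hence the coherence requirement and the choice to list $u_\beta$ before $\beta$. I also need that the twisted embedding still gives the quotient $H^*_\zeta/G^*_{j_\beta}$ rather than something with torsion. I would handle this by exhibiting an explicit isomorphism $N_{\boldsymbol s_\beta}+P/P\cong H^*_\zeta/G^*_{j_\beta}$, induced by $h^*_{\eta,\zeta,z,\iota}$, and verifying its kernel using $(c_5)$.
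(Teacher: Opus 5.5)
Your route is the paper's. You close up to fewer than $\theta$ branches and use the $\theta$-freeness of $\bar\eta$ to get disjoint tails. You then add branch modules and attached blocks one at a time, so that each successive quotient is $G^*_\kappa/G^*_j$ or $H^*_\zeta/G^*_j$, and you use coherence to see that the twist $\iota g_{\zeta,z}$ lands in $Rz$, which is already present. Adding whole branches makes the intersection with the earlier stage automatically an initial segment $M_{\eta_\beta\restriction j}$; this is, if anything, more careful than the paper's tail formulation. Two points need attention.

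\textbf{Compatibility of the two ordering requirements.} You need an enumeration that is good for the tree and also lists $u_\beta$ before $\beta$, but you do not check that these can hold together; ordering by tail-starts need not respect $u$. The repair is easy. Since each $u_\beta\subseteq\beta$ is finite, the set $\operatorname{anc}(\beta)$ of iterated predecessors is finite (K\"onig). Put $w'_\beta=\max\{w_\xi:\xi\in\operatorname{anc}(\beta)\cup\{\beta\}\}$ and enumerate $\Lambda$ by nondecreasing $w'$, breaking ties by the ordinal order. Then:
\begin{itemize}
\item the $w'$-tails are still pairwise disjoint;
\item predecessors come first;
\item each earlier branch meets $\eta_\beta$ in an initial segment of length $<w'_\beta$, since otherwise the meet node would lie in both $w'$-tails.
\end{itemize}

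\textbf{The relative clause (a genuine gap).} Listing $\Lambda'$ first and then continuing need not be good. A genuine $\beta\notin\Lambda'$ may have $\eta_\beta$ meeting $\bigcup\Lambda'$ cofinally in $\kappa$, and then the quotient at $\beta$ is $H^*_\zeta/G^*_\kappa$, not $H^*_\zeta/G^*_j$.

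This is not only a defect of the proof. Take $\kappa=\aleph_0$, a genuine $\beta$ with $z_\beta$ supported on $\eta_{\xi_0}$, and null coordinates $\xi_k$ ($k<\omega$) with $\lg(\eta_{\xi_k}\wedge\eta_\beta)=k$; this configuration is compatible with $\theta$-freeness.
\begin{itemize}
\item $P_0=\sum_k M_{\eta_{\xi_k}}$ is an output of the support construction.
\item $M_{\eta_\beta}+Rz_\beta\subseteq P_0$, so $e_\beta(G^*_\kappa)\subseteq P_0$, while $e_\beta(y)\notin M_*$ for $y\notin G^*_\kappa$.
\item Suppose some $Q=P_0\oplus Q'$ contained $P_0\cup e_\beta[H^*_\zeta]$, a set of size $\leq\chi<\theta$. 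Then $e_\beta$ would induce an embedding of $H^*_\zeta/G^*_\kappa$ into the free module $Q'$.
\item By clause (f) of Definition~\ref{d65} this forces $H^*_\zeta=G^*_\kappa$, which clause (i) rules out (take $\tau$ to be the inclusion).
\end{itemize}

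So the relative statement needs a hypothesis on $P_0$. For instance, require that no genuine branch outside its branch set $\Lambda'$ meets $\bigcup\Lambda'$ cofinally; this is automatic when $\Lambda'$ is finite, e.g. for $P_0=Rz$. Under that hypothesis, listing $\Lambda'$ first and then the rest in $w'$-order is good, and your argument goes through. The paper's one-line ``the same induction proves the relative assertion'' has exactly the same gap. Its later uses of the relative form, in Proposition~\ref{12}, therefore have to verify this hypothesis.
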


\begin{proof}
Null coordinates may be discarded.  Let $X$ have cardinality
$<\theta$.  Take all branch supports needed for $X$ and close them
under the finite predecessor supports occurring in the genuine
coordinates of $\overline{\boldsymbol s}$.  We obtain
$u\in[S]^{<\theta}$.  Enumerate the relevant genuine coordinates as
$\langle\beta_\xi:\xi<\tau\rangle$, where $\tau<\theta$.
By the $\theta$-freeness of $\bar\eta$, choose $i_\xi<\kappa$ so
that the tails
\(
\{\eta_{\beta_\xi}\upharpoonright j:j\geq i_\xi\}\) with
\(\xi<\tau,
\)
are pairwise disjoint and avoid the finitely many predecessor
supports already fixed at the $\xi$-th stage.  If
$\boldsymbol s_{\beta_\xi}=\langle\beta_\xi,\iota_\xi,
\zeta_\xi,z_\xi\rangle$, put
\(
L_\xi:=h^*_{\eta_{\beta_\xi},\zeta_\xi,z_\xi,\iota_\xi}
       (G^*_{i_\xi}).
\)
The free-amalgamation clauses of Construction~\ref{c310} and coherence
give, at the stage at which $N_{\boldsymbol s_{\beta_\xi}}$ is
added,
\(
N_{\boldsymbol s_{\beta_\xi}}\cap P_\xi=L_\xi.
\)
Consequently
\(
\frac{P_\xi+N_{\boldsymbol s_{\beta_\xi}}}{P_\xi}
 \cong
\frac{H^*_{\zeta_\xi}}{G^*_{i_\xi}},
\)
which is free by Definition~\ref{d65}(e).  Hence each successor
extension splits.  Choosing bases increasingly and taking unions at
limits yields a free module containing $X$.

The same induction proves the relative assertion: begin with a
basis of $P_0$ and, at every successor stage, extend that basis
across the split quotient.  Thus $P_0$ remains a direct summand of
the final free module $Q$.  Finally, Fact~\ref{unimodular} shows
that for $z\in Z^\sharp$ we may first replace a basis element by $z$
and start the construction with $Rz$.
\end{proof}

\begin{definition}\label{mulm}
Let $M$ be an $R$-module and let $h\in\End(M,+)$ be an additive
endomorphism. Let $r\in R$. By $\mu_r:M\to M$ we mean $\mu_r(m)=rm$. We say that $h$ is of \emph{multiplication type} if
there is $a\in R$ such that $h=\mu_a$; otherwise
$h$ is called a bad endomorphism.
\end{definition}

	We are now in a position to show that the construction can be continued while blocking unwanted endomorphisms.
\begin{lemma}\label{d89a}
Let $\overline{\boldsymbol s}\in\mathcal A_\gamma$ be coherent,
where $\gamma\leq\lambda$, and let
\(
h\in\End(M_{\overline{\boldsymbol s}},+)
\)
be not of multiplication type.  Then there is
\(
z=rx+y\in Z^\sharp\) with
\(r\in R,\ x\neq y\in\mathbf I
\)
such that $h(z)\notin Rz$.
\end{lemma}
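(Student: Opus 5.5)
We prove the contrapositive. Assume that $h\in\End(M_{\overline{\boldsymbol s}},+)$ satisfies $h(z)\in Rz$ for every $z\in Z^\sharp$. We show that $h=\mu_a$ for some $a\in R$. The argument has two stages: first we pin down $h$ on the free core $M_*$, using only the freeness of $M_*$ on $\mathbf I$ (Observation~\ref{d80}(i)); then we propagate this to all of $M_{\overline{\boldsymbol s}}$, using clause (f) of Definition~\ref{d65} together with Lemma~\ref{d222}. Throughout, we use that $\mathbf I$ has at least two elements, so that pairs $x\neq y$ exist.

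\emph{Stage 1: $h$ agrees with some $\mu_a$ on $M_*$.}

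(1) Take $r=0$. Then $z=y$ lies in $Z^\sharp$ for every $y\in\mathbf I$, so $h(y)=a_y y$ for some $a_y\in R$. This $a_y$ is unique because $Ry\cong R$ is free.

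(2) Take $r=1$. For $x\neq y$ in $\mathbf I$ we get $a_x x+a_y y=h(x+y)=c(x+y)$ for some $c\in R$. Comparing coefficients in the basis $\mathbf I$ gives $a_x=c=a_y$. So there is a single $a\in R$ with $h(y)=ay$ for all $y\in\mathbf I$.

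(3) Take an arbitrary $r\in R$. Then $h(rx)+ay=h(rx+y)=c(rx+y)$ for some $c$. Write $h(rx)$ in the basis $\mathbf I$.
\begin{itemize}
\item Comparing the $y$-coefficient gives $c=a$; any contribution of $h(rx)$ at $y$ is absorbed by first varying $y$, since $\mathbf I$ has a second element distinct from $x$.
\item Comparing the remaining coordinates gives $h(rx)=arx=\mu_a(rx)$.
\end{itemize}

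Since $M_*=\bigoplus_{x\in\mathbf I}Rx$ and $h$ is additive, it follows that $h\restriction M_*=\mu_a\restriction M_*$.

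\emph{Stage 2: $h=\mu_a$ on all of $M_{\overline{\boldsymbol s}}$.} Put $\delta=h-\mu_a$. This is an additive endomorphism vanishing on $M_*$. Since $M_{\overline{\boldsymbol s}}=M_*+\sum_\beta N_{\boldsymbol s_\beta}$, it suffices to show that $\delta$ vanishes on each genuine $N_{\boldsymbol s}=M^*_{\eta_\alpha,\zeta,z,\iota}$.

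(4) By Construction~\ref{c310}(c$_4$), $h^*_{\eta_\alpha,\zeta,z,\iota}(G^*_\kappa)$ consists of elements $h^*_\eta(x)+\iota g_{\zeta,z}(x)$, and these lie in $M_*$. Hence $\delta\circ h^*_{\eta_\alpha,\zeta,z,\iota}:H^*_\zeta\to M_{\overline{\boldsymbol s}}$ kills $G^*_\kappa$. It therefore induces an additive map
\[
\bar\delta:H^*_\zeta/G^*_\kappa\to M_{\overline{\boldsymbol s}}.
\]

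(5) The image of $\bar\delta$ has cardinality at most $\chi<\theta$. By Lemma~\ref{d222} it is contained in a free $R$-submodule $Q\subseteq M_{\overline{\boldsymbol s}}$.

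(6) Compose $\bar\delta$ with each coordinate projection $Q\to R$. Each composite is an additive map $H^*_\zeta/G^*_\kappa\to R$, and clause (f) of Definition~\ref{d65} forces it to be zero. Hence $\bar\delta=0$, so $\delta$ vanishes on $N_{\boldsymbol s}$. Therefore $h=\mu_a$, contradicting the assumption that $h$ is bad.

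\emph{Main obstacle.} I expect Stage 2 to be the delicate step. It needs two things: that $N_{\boldsymbol s}\cap M_*$ really contains the twisted image of $G^*_\kappa$, which follows from (c$_4$) since $g_{\zeta,z}$ lands in $Rz\subseteq M_*$; and the passage from a merely $\theta$-free module to a free one, which Lemma~\ref{d222} supplies via the cardinality bound $\chi<\theta$. Stage 1 is routine coefficient comparison in the free module $M_*$. The only point to watch there is noncommutativity of $R$: $\mu_a$ acts by left multiplication, so $\mu_a(rx)=arx$, and this is consistent with step (3).
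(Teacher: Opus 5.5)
Your proof is correct and follows the paper's own argument. First, coefficient comparison in the free core $M_*$ gives $h\restriction M_*=\mu_a\restriction M_*$; then $(h-\mu_a)\circ h^*_{\eta_\alpha,\zeta,z,\iota}$ is factored through $H^*_\zeta/G^*_\kappa$, and Lemma~\ref{d222} together with clause (f) of Definition~\ref{d65} kills it on every attached block.

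The only divergence is your step (3). Varying $y$ there needs a \emph{third} basis element. This is harmless, since $\mathbf I$ is infinite in this setting, but it is more than the two elements you announce at the outset. The paper instead keeps one pair $x\neq y$ fixed and derives $b_{r+s}=b_r+b_s$ and $b_rs+b_sr=0$; setting $s=1$ then gives $b_r=0$ using only two elements.
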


\begin{proof}
Assume toward a contradiction that the membership
\(
h(rx+y)\in R(rx+y)
\) holds
for every $r\in R$ and every pair of distinct
$x,y\in\mathbf I$.  We first prove that $h$ is multiplication by one
fixed scalar on $M_*$.  For $y\in\mathbf I$, choose
$x\in\mathbf I\setminus\{y\}$ and put $r=0$ in the membership \(
h(rx+y)\in R(rx+y)
\).  Thus
$h(y)\in Ry$, so there is a unique $t_y\in R$ with
$h(y)=t_yy$.  Applying again the membership  property to $x+y$ and comparing the two
basis coordinates shows $t_x=t_y$.  Hence there is a single
$t\in R$ such that
\[
h(x)=tx\qquad(x\in\mathbf I)\quad(\ast)
\]

Fix distinct $x,y\in\mathbf I$.  For $r\in R$, write, using
the membership property,
\(
h(rx+y)=a_r(rx+y).
\)
By additivity and $(\ast)$,
\[
h(rx)=a_rrx+(a_r-t)y\quad(\ast,\ast)
\]
Put $b_r=a_r-t$.  Comparing $(\ast,\ast)$ for $r$, $s$ and $r+s$
gives
\[
b_{r+s}=b_r+b_s
\quad\text{and}\quad
b_rs+b_sr=0 \quad(+)
\]
For $r=1$, $(\ast)$ applied to $x+y$ gives $a_1=t$, hence
$b_1=0$.  Taking $s=1$ in $(+)$ yields $b_r=0$ for every
$r\in R$.  Therefore
\(
h(rx)=trx\) with  \(r\in R,\ x\in\mathbf I.
\)
Since $\mathbf I$ is a basis of $M_*$,
\[
h\upharpoonright M_*=\mu_t\upharpoonright M_*\quad (\sharp)
\]

It remains to justify the step that was missing in the previous
proof: $(\sharp)$ forces equality on every attached block as well.
Set $k=h-\mu_t$.  Then $k\upharpoonright M_*=0$.  Let
$\beta\in S\cap\gamma$ be a genuine coordinate and write
\[
N_\beta=M^*_{\eta_\beta,\zeta_\beta,z_\beta,\iota_\beta},
\qquad
e_\beta=h^*_{\eta_\beta,\zeta_\beta,z_\beta,\iota_\beta}:
H^*_{\zeta_\beta}\longrightarrow N_\beta.
\]
Because $e_\beta(G^*_\kappa)\subseteq M_*$, the additive map
$k\circ e_\beta$ vanishes on $G^*_\kappa$ and therefore factors as
\(
\bar k_\beta:
H^*_{\zeta_\beta}/G^*_\kappa
\longrightarrow M_{\overline{\boldsymbol s}}.
\)
Suppose $\bar k_\beta\neq0$.  Its image has cardinality at most
$|H^*_{\zeta_\beta}|\leq\chi<\theta$.  By Lemma~\ref{d222}, that
image is contained in a free $R$-submodule $P$ of
$M_{\overline{\boldsymbol s}}$.  Choose
$0\neq p\in\operatorname{Im}(\bar k_\beta)$.  Since $P$ is free,
a coordinate projection $\pi:P\to R$ may be chosen with
$\pi(p)\neq0$.  Then
\[
\pi\circ\bar k_\beta:
(H^*_{\zeta_\beta}/G^*_\kappa,+)\longrightarrow(R,+)
\]
is a nonzero additive homomorphism, contradicting clause (f) in
Definition~\ref{d65}.  Thus $\bar k_\beta=0$ for every
genuine coordinate $\beta$.
The module $M_{\overline{\boldsymbol s}}$ is generated by $M_*$
and these attached blocks.  Hence $k=0$ on all of
$M_{\overline{\boldsymbol s}}$, so $h=\mu_t$, contrary to the
assumption that $h$ is not of multiplication type.
\end{proof}

\begin{proposition}\label{12}
Assume that
\begin{enumerate}[(a)]
\item $\alpha\in S$;
\item $\overline{\boldsymbol s}\in\mathcal A_\alpha$ is coherent;
\item $h\in\End(M_{\overline{\boldsymbol s}},+)$ is not of
multiplication type;
\item
\(
z=r_zx_z+y_z\in Z^\sharp,
 x_z\neq y_z\in\mathbf I,
\) and  \(h(z)\notin Rz;
\)
\item let $d_z=(r_z,1)$ and choose
$\zeta<\zeta_{\boldsymbol f}$ with $d_\zeta=d_z$;
\item let
\(
\sigma_z:Rd_\zeta\to Rz\)
be defined by \(a d_\zeta\mapsto az,
\)
and set $g_{\zeta,z}:=\sigma_z\circ g_\zeta$;
\item for $\iota=0,1$ let
$\boldsymbol s_\iota=\langle\alpha,\iota,\zeta,z\rangle$ and
\(
M_\iota=M_{\overline{\boldsymbol s}^\frown
                 \langle\boldsymbol s_\iota\rangle};
\)
\item both one-step extensions are coherent and their embeddings
\(
e_\iota:=h^*_{\eta_\alpha,\zeta,z,\iota}:
H^*_\zeta\to M^*_{\eta_\alpha,\zeta,z,\iota}
\)
satisfy
\(
e_\iota(x)=h^*_{\eta_\alpha}(x)+\iota g_{\zeta,z}(x)\) for any
\(x\in G^*_\kappa .
\)
\end{enumerate}
Then for some $\iota<2$ the following stronger, persistent
non-extension statement holds: there are no
$\gamma$ with $\alpha<\gamma\leq\lambda$, no coherent
$\overline{\boldsymbol t}\in\mathcal A_\gamma$ extending
$\overline{\boldsymbol s}^\frown\langle\boldsymbol s_\iota\rangle$,
and no
\(
\widetilde h\in\End(M_{\overline{\boldsymbol t}},+)
\)
with
$\widetilde h\upharpoonright M_{\overline{\boldsymbol s}}=h$:

 \[
\xymatrix{
	&& M_{\overline{\boldsymbol{s}}} \ar[d]_{h} \ar[r]^{\subseteq} & M_{\overline{\boldsymbol{s}}^\frown \langle \boldsymbol{s}_\iota \rangle} \ar[d]^{\nexists\widetilde h} \\
	&& M_{\overline{\boldsymbol{s}}} \ar[r]^{\subseteq} & M_{\overline{\boldsymbol{s}}^\frown \langle \boldsymbol{s}_\iota \rangle},
}
\]
In particular, $h$ does not extend to an endomorphism of $M_\iota$.
\end{proposition}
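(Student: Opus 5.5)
Argue by contradiction: assume both $\iota=0$ and $\iota=1$ fail. Then for each $\iota<2$ there are $\gamma_\iota>\alpha$, a coherent $\overline{\boldsymbol t}_\iota\in\mathcal A_{\gamma_\iota}$ extending $\overline{\boldsymbol s}^\frown\langle\boldsymbol s_\iota\rangle$, and $\widetilde h_\iota\in\End(M_{\overline{\boldsymbol t}_\iota},+)$ with $\widetilde h_\iota\upharpoonright M_{\overline{\boldsymbol s}}=h$. Only the restriction to the block $N_{\boldsymbol s_\iota}=e_\iota(H^*_\zeta)$ will be used. The plan is to turn the two restrictions into an additive map $H^*_\zeta\to R^2$ extending $\tau\circ g_\zeta$ for some $\tau$ with $\tau(d_\zeta)\neq0$, which contradicts clause (i) of Definition~\ref{d65}.

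First fix a projection onto $Rz$. By Fact~\ref{unimodular}, $Rz$ is a direct summand of $M_*$. Since $H^*_\zeta$ has size $\le\chi<\theta$, the images $\widetilde h_\iota(e_\iota(H^*_\zeta))$ together with $e_\iota(H^*_\zeta)$, $z$ and $h(z)$ form a set of size $<\theta$. The relative form of Lemma~\ref{d222}, started from the summand $Rz$ (with the finite support of $z$ included), gives a free $Q_\iota\subseteq M_{\overline{\boldsymbol t}_\iota}$ with $Q_\iota=Rz\oplus Q'_\iota$ containing all of these. Let $\pi_\iota:Q_\iota\to Rz\cong R$ be the projection, and write $\pi_\iota(w)=c(w)z$. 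Because $h(z)\notin Rz$, I also want a second coordinate detecting $h(z)$ modulo $Rz$. So I pick a basis coordinate $\pi'$ of $Q'_\iota$ with $\pi'(h(z))\neq0$, making sure $h(z)$ lies in $M_*$-part shared by both sides: choose $Q_\iota$ so that its intersection with $M_*$, which contains the finite support of $h(z)$, is the same summand for both $\iota$. This is done by first building a common free summand $P_0\subseteq M_{\overline{\boldsymbol s}}$ and then applying the relative clause twice. The target is then $R^2$, via $p:=(c,\pi')$ on $P_0$.

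Next, the key computation. Put $\Psi_\iota:=p\circ\widetilde h_\iota\circ e_\iota:H^*_\zeta\to R^2$. For $x\in G^*_\kappa$ we have $e_\iota(x)=h^*_{\eta_\alpha}(x)+\iota g_{\zeta,z}(x)\in M_*$, so
\[
\Psi_\iota(x)=p\bigl(h(h^*_{\eta_\alpha}(x))\bigr)+\iota\, p\bigl(h(g_{\zeta,z}(x))\bigr).
\]
Subtracting gives $(\Psi_1-\Psi_0)\upharpoonright G^*_\kappa=p\circ h\circ\sigma_z\circ g_\zeta=\tau\circ g_\zeta$, where $\tau:=p\circ h\circ\sigma_z:(Rd_\zeta,+)\to(R^2,+)$ is additive. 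Moreover $\tau(d_\zeta)=p(h(z))$, which is nonzero since its second coordinate $\pi'(h(z))\neq0$. So $\Psi_1-\Psi_0$ is an additive map $H^*_\zeta\to R^2$ extending $\tau\circ g_\zeta$, contradicting (i). Hence some $\iota$ works, and the ``in particular'' clause is the special case $\gamma=\alpha+1$, $\overline{\boldsymbol t}=\overline{\boldsymbol s}^\frown\langle\boldsymbol s_\iota\rangle$.

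The main obstacle is making $p$ a single, well-defined additive map on the relevant parts of both $M_{\overline{\boldsymbol t}_0}$ and $M_{\overline{\boldsymbol t}_1}$, which are different modules. On $M_{\overline{\boldsymbol s}}$ the values must agree, since otherwise the difference computation on $G^*_\kappa$ breaks. I would handle this by choosing the common free summand $P_0\supseteq h^*_{\eta_\alpha}(G^*_\kappa)\cup h(h^*_{\eta_\alpha}(G^*_\kappa))\cup\{z,h(z)\}\cup h(\operatorname{ran}g_{\zeta,z})$ inside $M_{\overline{\boldsymbol s}}$ (possible, as Lemma~\ref{d222} makes $M_{\overline{\boldsymbol s}}$ $\theta$-free relative to $Rz$). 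I then define $p$ on $P_0$ and extend it to each $Q_\iota=P_0\oplus Q''_\iota$ by zero on $Q''_\iota$. Coherence ensures the supports stay below $\theta$.
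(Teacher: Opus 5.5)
Your argument is correct and is essentially the paper's proof: you build a common free piece $P_0=Rz\oplus P_0'\subseteq M_{\overline{\boldsymbol s}}$, enlarge it relatively to $P_0\oplus Q''_\iota$ inside each $M_{\overline{\boldsymbol t}_\iota}$ with the functional extended by zero, and subtract the two compositions on $G^*_\kappa$ to get an additive extension of $\tau\circ g_\zeta$ with $\tau(d_\zeta)\neq0$. The paper uses only your second coordinate, in the form $w\mapsto\pi'(w)d_\zeta$ via $\delta$ and $\sigma_z^{-1}$, so your coordinate $c$ is harmless but unnecessary.

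One small repair is needed. Put all of $h[Rz]$ into $P_0$, not just $h(z)$ and $h(\operatorname{ran}g_{\zeta,z})$. Otherwise $\tau=p\circ h\circ\sigma_z$ is not defined on all of $Rd_\zeta$, as clause (i) requires. This costs nothing, because $|R|\le\chi<\theta$.
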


\begin{proof}
First observe that $\sigma_z$ really is an $R$-isomorphism.  If
$a(r_z,1)=0$ in $R^2$, then its second coordinate gives $a=0$;
and if $az=0$, comparison of the coefficient of the basis element
$y_z$ likewise gives $a=0$.  Thus both $Rd_z$ and $Rz$ are free
cyclic modules with the displayed generators.
Assume, toward a contradiction, that neither successor has the
persistent non-extension property.  Then, for $\iota=0,1$, there
are $\gamma_\iota>\alpha$, coherent
$\overline{\boldsymbol t}_\iota\in\mathcal A_{\gamma_\iota}$
extending
$\overline{\boldsymbol s}^\frown\langle\boldsymbol s_\iota\rangle$,
and
\(
h_\iota\in\End(M_{\overline{\boldsymbol t}_\iota},+)
\)
with
$h_\iota\upharpoonright M_{\overline{\boldsymbol s}}=h$.
Put
$$
k_\iota=h_\iota\circ e_\iota:
(H^*_\zeta,+)\longrightarrow
(M_{\overline{\boldsymbol t}_\iota},+).
$$
Since $|G^*_\kappa|,|R|\leq\chi<\theta$, the set
\[
X_0=\{z,h(z)\}\cup h[M_{\eta_\alpha}]\cup h[Rz]
\]
has size $<\theta$.  By Fact~\ref{unimodular}, $Rz$ is a direct
summand of $M_*$.  Apply the relative form of Lemma~\ref{d222},
starting with $Rz$, to obtain a free submodule
$P\subseteq M_{\overline{\boldsymbol s}}$ such that
$X_0\subseteq P$ and
\(
P=Rz\oplus P'\)
for a free $R$-module $P'$.
Write
\[
h(z)=az+p_z,
\qquad a\in R,\quad 0\neq p_z\in P'.
\]
Choose an $R$-homomorphism $\pi:P'\to R$ with
$\pi(p_z)\neq0$, and define
\[
\delta:P\longrightarrow Rz,
\qquad
\delta(bz+p)=\pi(p)z.
\]
Then $\delta\upharpoonright Rz=0$ and
$\delta(h(z))\neq0$.
For $\iota=0,1$, the set $k_\iota[H^*_\zeta]$ has size at most
$\chi$.  Apply again the relative form of Lemma~\ref{d222} inside
$M_{\overline{\boldsymbol t}_\iota}$, now starting with $P$, to
obtain a free
$P_\iota\subseteq M_{\overline{\boldsymbol t}_\iota}$ with
\[
P\cup k_\iota[H^*_\zeta]\subseteq P_\iota,
\qquad
P_\iota=P\oplus Q_\iota.
\]
Extend $\delta$ by zero on $Q_\iota$ and denote the resulting map
by $\delta_\iota:P_\iota\to Rz$.
Define the additive homomorphism
\[
\Phi=\sigma_z^{-1}\circ
      (\delta_1\circ k_1-\delta_0\circ k_0):
H^*_\zeta\longrightarrow Rd_\zeta\subseteq R^2.
\]
Let $x\in G^*_\kappa$.  By item $(h)$  from our hypothesis,
\[
e_0(x)=h^*_{\eta_\alpha}(x),
\qquad
e_1(x)=h^*_{\eta_\alpha}(x)+g_{\zeta,z}(x).
\]
Both elements lie in the old module
$M_{\overline{\boldsymbol s}}$: the first is in
$M_{\eta_\alpha}\subseteq M_*$ and the second differs from it by
an element of $Rz\subseteq M_*$.  Therefore
\(
k_1(x)-k_0(x)=h(g_{\zeta,z}(x)).
\)
Moreover, by the choice of $P$, both $k_0(x)$ and $k_1(x)$ lie in
$P$, so $\delta_0$ and $\delta_1$ agree with $\delta$ there.  Hence
\(
\Phi(x)=
\sigma_z^{-1}\delta\bigl(h(g_{\zeta,z}(x))\bigr).
\)
Since $g_{\zeta,z}=\sigma_z\circ g_\zeta$, define $\tau$ by the following composition:
$$Rd_\zeta\stackrel{ \sigma_z}\longrightarrow Rz \stackrel{h\rest}\longrightarrow P\stackrel{\delta}\longrightarrow Rz\stackrel{\sigma_z^{-1}}\longrightarrow Rd_\zeta\subseteq (R^2,+),$$
i.e.,
\(
\tau=\sigma_z^{-1}\circ\delta\circ h\circ\sigma_z.
\)
This is well-defined because $h[Rz]\subseteq P$.  We have
\(
\Phi\upharpoonright G^*_\kappa=\tau\circ g_\zeta:
\) \[
\begin{CD}
G^*_\kappa @>\subseteq>> H^*_\zeta  \\
@Vg_\zeta VV @VV\Phi V \\
Rd_\zeta @>\tau>> R\times R
\end{CD}
\]
Finally, since $\sigma_z(d_\zeta)=z$,
\[
\tau(d_\zeta)=\sigma_z^{-1}(\delta(h(z)))\neq0.
\]
Thus $\Phi$ is an additive extension to $H^*_\zeta$ of
$\tau\circ g_\zeta$ with $\tau(d_\zeta)\neq0$, contradicting
clause (i) in the definition of a nice frame.  Therefore at least
one of the two successors has the persistent non-extension property: no endomorphism
of any later coherent extension through that successor can extend
$h$.  This proves the proposition.
\end{proof}

We now state and prove the main theorem of this section.

\begin{theorem}\label{d71}
Assume Hypothesis~\ref{hp71}.  Then there exists a $\theta$-free
$R_{\boldsymbol f}$-module $M$ such that
\(
\End_{\mathbb Z}(M)\cong R_{\boldsymbol f}.
\)
If, in addition, $(R_{\boldsymbol f},+)$ is $\theta$-free, then
$(M,+)$ is a $\theta$-free abelian group.
\end{theorem}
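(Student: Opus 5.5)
The plan is a weak-diamond diagonalization of length $\lambda$: build a coherent $\overline{\boldsymbol s}\in\mathcal A_\lambda$ one coordinate at a time and put $M:=M_{\overline{\boldsymbol s}}$. By Lemma~\ref{d222}, $M$ is $\theta$-free as an $R$-module whenever $\overline{\boldsymbol s}$ is coherent. The remaining task is to choose the coordinates so that every additive endomorphism of $M$ is of multiplication type. Given that, $a\mapsto\mu_a$ is an isomorphism $R\to\End_{\mathbb Z}(M)$. It is injective because $M_*$ is free and nonzero, so $aM=0$ forces $a=0$. Depending on conventions it may be an anti-isomorphism; the usual fix is to pass to $R^{\rm op}$ or to right modules. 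The "in particular" clause is then routine: a $\theta$-free $R$-module is locally a direct sum of copies of $R$, so if $(R,+)$ is $\theta$-free, every subgroup of size $<\theta$ lies in a free module of rank $<\theta$ over $R$, whose additive group is a direct sum of fewer than $\theta$ copies of $(R,+)$. I would check that such a sum is $\theta$-free, using that $\theta$-freeness passes to direct sums of $\theta$-free groups.

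The coding runs as follows. Fix a decomposition of $\lambda$ so that each $S_i$ handles one bookkeeping task. By Hypothesis~\ref{hp71}(e), the tree $\mathcal T$ has size $<\lambda$, so $M_*$ and every $M_{\overline{\boldsymbol s}\restriction\alpha}$ have size $<\lambda$ on a club. Each $M_{\overline{\boldsymbol s}\restriction\alpha}$ is determined by $\overline{\boldsymbol s}\restriction\alpha$, which is coded by a subset of $\alpha$. A pair consisting of $\overline{\boldsymbol s}\restriction\alpha$ and an endomorphism $h$ of $M_{\overline{\boldsymbol s}\restriction\alpha}$ can thus be coded by a function $f\restriction\alpha$ with $f:\lambda\to 2$. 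Define the colouring $c(f\restriction\alpha)$ for $\alpha\in S$ as follows. If $f\restriction\alpha$ codes a coherent $\overline{\boldsymbol s}$ and a bad $h$, use Lemma~\ref{d89a} to pick a canonical $z\in Z^\sharp$ with $h(z)\notin Rz$, chosen in some fixed well-order and with support below $\alpha$ so that coherence holds. Proposition~\ref{12} supplies some $\iota<2$ with the persistent non-extension property; let $c(f\restriction\alpha)$ be the least such $\iota$, and $0$ otherwise. Then $\Phi_\lambda(S_i)$ gives $g_i:S_i\to 2$, and at $\alpha\in S_i$ we set $\boldsymbol s_\alpha=\langle\alpha,1-g_i(\alpha),\zeta,z\rangle$.

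Here lies the main obstacle: $z$ depends on $h$, which the construction does not know. The frame must therefore be fixed in advance, with only the bit $\iota$ guessed. I would first try coding $z$ itself along the sequence: use $\lambda$ many disjoint $S_i$ and a bookkeeping $i\mapsto z_i$, requiring $\zeta,z$ at $\alpha\in S_i$ to be determined by $i$, with coherence guaranteed by the branches below $\alpha$. Alternatively, put $z$ into the domain of the colouring and define $c$ to output $1-\iota$ only when the coded $z$ equals $z_i$. The choice of $\boldsymbol s_\alpha$ depends only on $\alpha$ and $g$, so the final $\overline{\boldsymbol s}$ is well defined and coherent.

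Verification: suppose $h\in\End(M,+)$ is bad. Restrict to a club of $\alpha$ where $h$ maps $M_{\overline{\boldsymbol s}\restriction\alpha}$ into itself, and where $h\restriction M_{\overline{\boldsymbol s}\restriction\alpha}$ is still bad. Badness persists on a club because a witness $z$ appears at some bounded stage, by Lemma~\ref{d89a}. Fix that witness $z=z_i$. Let $f$ code $(\overline{\boldsymbol s},h)$. Weak diamond gives stationarily many $\alpha\in S_i$ in this club with $c(f\restriction\alpha)=g_i(\alpha)$. At such an $\alpha$ we attached the successor with the persistent non-extension property of Proposition~\ref{12}, and $\overline{\boldsymbol s}$ extends it. This contradicts the fact that $h$ extends $h\restriction M_{\overline{\boldsymbol s}\restriction\alpha}$ to all of $M$. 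Hence $\End(M,+)=\{\mu_a:a\in R\}$.
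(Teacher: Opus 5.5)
Your overall architecture is the paper's. Each test element $z\in Z^\sharp$ is fixed in advance on its own stationary set; this is the paper's $S_z=S_{\nu(z)}$, using $|Z^\sharp|<\lambda$. You attach $\langle\alpha,\iota,\zeta(z),z\rangle$ at $\alpha\in S_z$ above the support bound $b(z)$, so that coherence holds. You code $\bigl(\overline{\boldsymbol s}\restriction\alpha,\ h\restriction M_{\overline{\boldsymbol s}\restriction\alpha}\bigr)$ by $f\restriction\alpha$ on a club, colour via Proposition~\ref{12}, and finish with Lemma~\ref{d89a} and a guessed point in the invariance club.

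The bookkeeping version is not one option among two. Your first attempt, where the colouring extracts a witness $z$ from $h$, cannot be used. The two successors at $\alpha$ must be determined before $h$ is known, and Proposition~\ref{12} needs the \emph{attached} $z$ to satisfy $k(z)\notin Rz$. So the colouring on $S_z$ must test $k(z)\notin Rz$ for that fixed $z$, exactly as the paper's $c_z$ does.

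There is, however, a concrete error in the bit you attach. You define $c(f\restriction\alpha)$ as the least $\iota$ whose successor has the persistent non-extension property, and then attach $\langle\alpha,1-g_i(\alpha),\zeta,z\rangle$. At a guessed point $c(f\restriction\alpha)=g_i(\alpha)$, so you attach the \emph{other} successor. Proposition~\ref{12} only guarantees that at least one of the two successors kills $k$, not both. Your verification sentence, that at such $\alpha$ you attached the successor with the persistent non-extension property, is therefore false as written.

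There are two consistent fixes, and the conventions cannot be mixed:
\begin{enumerate}
\item[(1)] Attach the bit $g_i(\alpha)$ itself. This is what the paper does: it sets $g(\alpha)=\varepsilon_z(\alpha)$, where $c_z$ returns the least killing bit.
\item[(2)] Let the colouring output the complement of the killing bit, as in your ``alternatively'' variant, and keep attaching $1-g_i(\alpha)$.
\end{enumerate}
With either fix, your proof coincides with the paper's.

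One small point: $a\mapsto\mu_a$ is already a ring isomorphism onto $\End_{\mathbb Z}(M)$ for left modules under ordinary composition, since $\mu_a\circ\mu_b=\mu_{ab}$. No passage to $R^{\rm op}$ is needed.
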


\begin{proof}
Put $R=R_{\boldsymbol f}$.  We first record the smallness needed
for weak diamond.  By $(e_2)$ this is immediate, while under
$(e_1)$ every member of $\mathcal T$ is a sequence of length
$<\kappa$ from $U$, so
$
|\mathcal T|\leq\mu^{<\kappa}<\lambda.
$
Since every $G_i^*$ has size at most $\chi<\lambda$, Construction
\ref{c310} gives
\(
|M_*|<\lambda.
\)
Each attached module has size at most $\chi$, and therefore every
module obtained from $M_*$ by fewer than $\lambda$ attachments has
cardinality $<\lambda$.

Here, we connect to stationary sets and the witnesses. To this end,
for each $z\in Z^\sharp$ fix once and for all a presentation
\[
z=r_zx_z+y_z,
\qquad r_z\in R,\quad x_z\neq y_z\in\mathbf I\quad(\sharp)
\]
Put $d_z=(r_z,1)$ and choose $\zeta(z)<\zeta_{\boldsymbol f}$
with $d_{\zeta(z)}=d_z$.  By Construction~\ref{c310}(c),
$(\zeta(z),z)$ is compatible and we have
\[
g_z:=g_{\zeta(z),z}
 =\sigma_z\circ g_{\zeta(z)}:G^*_\kappa\longrightarrow Rz,
\quad
\sigma_z(a d_z)=az.
\]

Recall from \(
|M_*|<\lambda
\) that $|\mathbf I|<\lambda$; hence
$|Z^\sharp|<\lambda$.  Choose an injection
$\nu:Z^\sharp\to\lambda$ and set
\(
S_z=S_{\nu(z)}.
\)
The $S_z$ are pairwise disjoint stationary sets and weak diamond
holds on each of them.
For each $z\in Z^\sharp$, choose a finite branch support
$u(z)\subseteq S$ for the two basis elements occurring in $(\sharp)$
and put
\(
b(z)=\sup u(z)+1.
\)
Thus whenever $\alpha>b(z)$, attaching the test belonging to $z$
at coordinate $\alpha$ satisfies the coherence requirement of
Definition~\ref{83}.

We are going to apply the binary tree of approximations.
Indeed, we recursively construct, for every $\rho\in{}^{<\lambda}2$, a
coherent
\(
\overline{\boldsymbol s}_\rho\in\mathcal A_{\lg(\rho)}
\)
so that the system is increasing under initial segment.  At limit
lengths we take unions.  Suppose $\lg(\rho)=\alpha$.
If $\alpha\in S_z$ and $\alpha>b(z)$ for the unique possible
$z\in Z^\sharp$, define \(
\boldsymbol t_{\alpha,z,\iota}
:=\langle\alpha,\iota,\zeta(z),z\rangle
\)  for $\iota<2$,
and put
\(
\overline{\boldsymbol s}_{\rho^\frown\langle\iota\rangle}
 =\overline{\boldsymbol s}_\rho{}^\frown
  \langle\boldsymbol t_{\alpha,z,\iota}\rangle.
\)
If $\alpha\in S$ but this is not an active stage, append the null
symbol $\mathbf0_\alpha$ to both successors.  If $\alpha\notin S$,
there is no new $S$-coordinate and we keep the same sequence for
both successors.  This defines the whole tree.  Set
\(
M_\rho=M_{\overline{\boldsymbol s}_\rho}.
\)
By \(
|M_*|<\lambda
\), regularity of $\lambda$, and the size bound on the
attachments,
\[
|M_\rho|<\lambda
\qquad(\rho\in{}^{<\lambda}2).\]
At an active stage, Proposition~\ref{12} applies to every additive
endomorphism $k$ of $M_\rho$ satisfying $k(z)\notin Rz$: one of the
two successors in $\overline{\boldsymbol s}_{\rho^\frown\langle\iota\rangle}
=\overline{\boldsymbol s}_\rho{}^\frown
\langle\boldsymbol t_{\alpha,z,\iota}\rangle$ does not admit an extension of $k$.
There is a fixed coding of the tree such that:
\begin{quote}
for every branch $g\in{}^\lambda2$ and every
$h\in\End(\bigcup_{\alpha<\lambda}M_{g\upharpoonright\alpha},+)$,
there is $F\in{}^\lambda2$ and a club $C\subseteq\lambda$ such that,
whenever $\alpha\in C$ and
$h[M_{g\upharpoonright\alpha}]\subseteq
 M_{g\upharpoonright\alpha}$, the initial segment
$F\upharpoonright\alpha$ decodes exactly the pair
\[
\bigl(g\upharpoonright\alpha,
 h\upharpoonright M_{g\upharpoonright\alpha}\bigr)\quad
(\dagger)
\]
\end{quote}
Indeed, first enumerate the fixed core $M_*$ by ordinals below
some $\gamma_*<\lambda$.  Every later element has a canonical code
consisting of the stage at which its attachment was made together
with an index $<\chi$ in a fixed copy of $H^*_\zeta$.  Fix pairing
and finite-sequence functions on $\lambda$.  The ordinals closed
under these functions and above $\gamma_*$ form a club.  On those
ordinals the elements of $M_\rho$, the branch $\rho$, and the graph
of a map $M_\rho\to M_\rho$ are coded below the same ordinal.
Interleaving the code of the branch with the code of the graph
gives $(\dagger)$.  This is exactly where the strict bound
$|\mathcal T|<\lambda$ is needed.

We shall also use the standard closure fact that for a continuous
increasing chain $\langle M_\alpha:\alpha<\lambda\rangle$ with
$|M_\alpha|<\lambda$, every self-map of the union leaves
$M_\alpha$ invariant for club many $\alpha$.  To see this, for each
$\alpha$ choose $f(\alpha)<\lambda$ with
$h[M_\alpha]\subseteq M_{f(\alpha)}$; the closure points of $f$
form a club, and continuity gives the assertion.

\medskip
Let us apply the weak-diamond colorings. First,
for any fixed $z\in Z^\sharp$, we define
$c_z:{}^{<\lambda}2\to2$ as follows.  If $w\in{}^\alpha2$ with
$\alpha\in S_z$, $\alpha>b(z)$, and $w$ decodes a pair
$(\rho,k)$ where
\[
\rho\in{}^\alpha2,
\qquad
k\in\End(M_\rho,+),
\qquad
k(z)\notin Rz,
\]
then put  $c_z(w)$ to be
\[
\begin{aligned}
\min\{\iota<2:
\text{the $\iota$-th successor has
  non-extension property of Proposition~\ref{12} for }k\}.
\end{aligned}
\]

The set in $c_z(w)$ is nonempty by Proposition~\ref{12}; note
that $k(z)\notin Rz$ already implies that $k$ is not of
multiplication type.  On all invalid or inactive codes put
$c_z(w)=0$.
By $\Phi_\lambda(S_z)$ choose
\(
\varepsilon_z:S_z\to2
\)
that predicts $c_z$ in the sense of weak diamond.  Define one branch
$g\in{}^\lambda2$ by
\[
g(\alpha)=
\begin{cases}
\varepsilon_z(\alpha),&\alpha\in S_z
 \text{ for some }z\in Z^\sharp,\\
0,&\text{otherwise}.
\end{cases}
\quad(\ast)
\]
This is well-defined because the $S_z$ are pairwise disjoint.
Finally put
\[
\overline{\boldsymbol s}
 =\bigcup_{\alpha<\lambda}
   \overline{\boldsymbol s}_{g\upharpoonright\alpha},
\qquad
M=M_{\overline{\boldsymbol s}}
 =\bigcup_{\alpha<\lambda}M_{g\upharpoonright\alpha}.
\]
The sequence $\overline{\boldsymbol s}$ is coherent, and therefore
Fact~\ref{d222} implies that $M$ is $\theta$-free as an
$R$-module.

\medskip
We now proceed with the proof of our theorem by eliminating bad endomorphisms.
Suppose toward a contradiction that $h\in\End(M,+)$ is not of
multiplication type.  By Lemma~\ref{d89a}, choose
$z\in Z^\sharp$ with
\(
h(z)\notin Rz.
\)
Apply the coding claim to the branch $g$ and the map $h$, and
intersect its club with the club of stages at which
\(
h[M_{g\upharpoonright\alpha}]
 \subseteq M_{g\upharpoonright\alpha}.
\)
Let $F\in{}^\lambda2$ be the resulting global code.  Weak diamond
on $S_z$ (see Definition~\ref{dwd}(2)) gives stationarily many $\alpha\in S_z$ such that
\[
c_z(F\upharpoonright\alpha)=\varepsilon_z(\alpha)
\quad(+)
\]
Choose such an $\alpha$ in the two clubs and above $b(z)$.  Put
\(
\rho=g\upharpoonright\alpha,\) and
\(
k=h\upharpoonright M_\rho.
\)
Since $h[M_{g\upharpoonright\alpha}]
\subseteq M_{g\upharpoonright\alpha}$ we observe that $k\in\End(M_\rho,+)$. Since $h(z)\notin Rz$, we deduce that
$k(z)\notin Rz$; and by $(\dagger)$,
$F\upharpoonright\alpha$ decodes precisely $(\rho,k)$.  Hence, combining the definition of $c_z(-)$ with
$(\ast)$ and $(+)$, we deduce that $g(\alpha)$ is equal to
\[
\min\{\iota<2:
 \text{the $\iota$-th successor has
 non-extension property of Proposition~\ref{12} for }k\}.
\]
But the final coherent sequence $\overline{\boldsymbol s}$ extends
$\overline{\boldsymbol s}_\rho{}^\frown
 \langle\boldsymbol t_{\alpha,z,g(\alpha)}\rangle$, and the global
endomorphism $h$ extends $k$ on $M_\rho$.  This is exactly what the last
non-extension property forbids.  Hence we
obtain a contradiction.  Thus every additive endomorphism of $M$
is of multiplication type.
For $r\in R$, multiplication
$\mu_r(x)=rx$ is an additive endomorphism of $M$, and
\(
\varphi:R\to\End(M,+),\) with
\( r\mapsto \mu_r,
\)
is a ring homomorphism.  The preceding paragraph proves
surjectivity.  It is injective because $M$ contains the free core
$M_*$: choosing a basis element $x\in\mathbf I$, the equality
$\mu_r=0$ gives $rx=0$, hence $r=0$.  Therefore
\(
R\cong\End_{\mathbb Z}(M).
\)
Finally assume that $(R,+)$ is $\theta$-free and let
$A\subseteq(M,+)$ have cardinality $<\theta$.  By the $R$-module
$\theta$-freeness just proved, $A$ is contained in a free
$R$-submodule
\(
P=\bigoplus_{b\in B}Rb.
\)
Only fewer than $\theta$ coordinates of $B$ occur in elements of
$A$; call this set $B_0$.  For each $b\in B_0$, let $A_b$ be the
subgroup of $(R,+)$ generated by the $b$-coordinates of elements
of $A$.  Then $|A_b|<\theta$, so $A_b$ is free abelian.  Thus
$\bigoplus_{b\in B_0}A_b$ is free abelian and contains $A$.
Every subgroup of a free abelian group is free, so $A$ is free.
Hence $(M,+)$ is $\theta$-free.
\end{proof}

\section{Realization via the Super Black Box}
\label{sec:sbb-realization}

The main result of this section is {Theorem} \ref{main2}.

\begin{notation} Let $\kappa<\lambda$ be regular cardinals and $S \subseteq \lambda$ be stationary. We set $S^\lambda_{\kappa}=\{\alpha < \lambda\mid \cf(\alpha)=\kappa     \}$.
\end{notation}
\begin{definition}\label{sbb-free}
	Let $\kappa<\lambda$ be regular cardinals and let
	$S\subseteq S^\lambda_\kappa$.  A family
	\[
	\bar C=\langle C^\delta_\gamma:\delta\in S,\ \gamma<\lambda\rangle
	\]
	is called \emph{$\mu^+$-free} if whenever
	$u\subseteq S\times\lambda$ has cardinality at most $\mu$, there are
	bounded sets $v^\delta_\gamma\subseteq C^\delta_\gamma$,
	$(\delta,\gamma)\in u$, such that
	\(
	\bigl\langle C^\delta_\gamma\setminus v^\delta_\gamma:
	(\delta,\gamma)\in u\bigr\rangle
	\)
	is pairwise disjoint.
\end{definition}

\begin{definition}\label{sbbd}
	Let $\mu$ be singular, $\kappa=\cf(\mu)$, let $\lambda>\mu$ be regular,
	and let $\Theta<\mu$.  We say that
	$\bar C=\langle C^\delta_\gamma:\delta\in S,\gamma<\lambda\rangle$
	has the \emph{multiple Super Black Box property with colors $\Theta$} if
	$S\subseteq S^\lambda_\kappa$ is stationary, each
	$C^\delta_\gamma\subseteq\delta$ is unbounded of order type $\kappa$,
	$\bar C$ is $\mu^+$-free, and for every family of colorings
	\[
	F^\delta_\gamma:{}^{C^\delta_\gamma}(2^\mu)\longrightarrow\Theta
	\qquad(\delta\in S,\ \gamma<\lambda)
	\]
	there are colors $c^\delta_\gamma<\Theta$ such that for every
	$\delta\in S$ and every $q:\delta\to2^\mu$, the equality
	\(
	F^\delta_\gamma(q\restriction C^\delta_\gamma)=c^\delta_\gamma
\)
	holds for at least one (in fact, in the version used here, for many)
	$\gamma<\lambda$.
\end{definition}

\begin{fact}[Shelah, \cite{Sh:1268}]\label{super-bbthm}
	Assume that $\mu$ is a strong limit singular cardinal,
	$\kappa=\cf(\mu)$, $\Theta<\mu$, and
	\(
	\lambda=\min\{\rho:2^\rho>2^\mu\}<2^\mu.
	\)
	Then $\lambda$ is regular.  For every stationary
	$S\subseteq S^\lambda_\kappa$ there is a family with the multiple Super
	Black Box property of Definition~\ref{sbbd}.  The construction may be
	applied independently to at most $\lambda$ many stationary sets.
\end{fact}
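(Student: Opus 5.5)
The plan has three parts: regularity of $\lambda$, a $\mu^+$-free ladder system from pcf theory, and the prediction property via a weak-diamond style diagonalization. The prediction step is the real content of \cite{Sh:1268}; I only sketch how I would attack it.

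\textbf{Regularity of $\lambda$.} Suppose $\lambda$ were singular. Since $2^\rho=2^\mu$ for all $\rho<\lambda$, we get $2^{<\lambda}=2^\mu$. The standard identity gives
\[
2^\lambda=(2^{<\lambda})^{\cf(\lambda)}=(2^\mu)^{\cf(\lambda)}=2^{\mu\cdot\cf(\lambda)}.
\]
Now $\cf(\lambda)<\lambda$ implies $2^{\cf(\lambda)}\le 2^\mu$, so $\mu\cdot\cf(\lambda)<\lambda$ and hence $2^{\mu\cdot\cf(\lambda)}=2^\mu$. Thus $2^\lambda=2^\mu$, contradicting the choice of $\lambda$.

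\textbf{Freeness.} Fix a stationary $S\subseteq S^\lambda_\kappa$. For each $\delta\in S$ I want $\lambda$ many ladders $C^\delta_\gamma\subseteq\delta$ of order type $\kappa=\cf(\mu)$ whose whole family is $\mu^+$-free in the sense of Definition~\ref{sbb-free}.
\begin{enumerate}
\item[(i)] Since $\mu$ is strong limit singular, pcf theory gives an increasing sequence of regular cardinals $\langle\mu_i:i<\kappa\rangle$ cofinal in $\mu$ with $\operatorname{tcf}\bigl(\prod_i\mu_i/J^{\bd}_\kappa\bigr)=\mu^+$.
\item[(ii)] The corresponding scale is a $\mu^+$-free family of functions in the sense of Definition~\ref{y37}. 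Using $2^\mu>\lambda$ and the strong limit hypothesis, it can be thinned and re-indexed into a $\mu^+$-free family $\langle\eta_{\delta,\gamma}\rangle$, each $\eta_{\delta,\gamma}\colon\kappa\to\delta$ increasing and cofinal.
\item[(iii)] Put $C^\delta_\gamma=\operatorname{ran}(\eta_{\delta,\gamma})$. Disjointness of tails of distinct functions translates directly into the required bounded sets $v^\delta_\gamma$.
\end{enumerate}

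\textbf{Prediction.} The colorings can be handled one $\delta$ at a time, because for a fixed $\delta$ the requirement only involves $q\colon\delta\to 2^\mu$ and the colors $\langle c^\delta_\gamma:\gamma<\lambda\rangle$. This is also why the construction can be repeated independently on up to $\lambda$ stationary sets.

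Suppose no choice of colors works for $\delta$. Then every $\bar c\in{}^\lambda\Theta$ is refuted by some $q=q_{\bar c}\colon\delta\to2^\mu$, meaning $F^\delta_\gamma(q\restriction C^\delta_\gamma)\neq c_\gamma$ for every $\gamma<\lambda$. There are $2^\lambda$ many $\bar c$ and only $(2^\mu)^{|\delta|}=2^\mu<2^\lambda$ many $q$. So I would look for one $q$ refuting $2^\lambda$ many $\bar c$ that are coherent enough to contradict the fact that $q$ fixes a single forbidden value at each $\gamma$.

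My first attempt would use the Devlin--Shelah weak-diamond argument from $2^\mu<2^\lambda$, carried out along $\gamma<\lambda$. At each initial segment $\gamma'<\lambda$ the relevant information is $\langle q\restriction C^\delta_\gamma:\gamma<\gamma'\rangle$, and there are only $2^{|\gamma'|+\mu}=2^\mu$ possibilities for it. Freeness makes the restrictions $q\restriction C^\delta_\gamma$ behave like independent coordinates off bounded sets, so a value for the coordinate at $\gamma'$ can be chosen almost freely given the earlier ones. One then codes $\bar c$ by a branch through ${}^{<\lambda}2$ and diagonalizes.

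I expect the main obstacle to be exactly here. Freeness only holds for subfamilies of size $\le\mu$, while the diagonalization runs over all $\lambda$ coordinates. Bridging this gap will need the strong-limit hypothesis to reduce everything to $\mu$-sized pieces, together with the fact that $\Theta<\mu$, so that choosing colors in $\Theta$ costs nothing in counting.
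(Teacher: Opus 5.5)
Your regularity argument is correct and is essentially the paper's computation. The only slip is that $2^\rho=2^\mu$ holds just for $\mu\le\rho<\lambda$, whereas $2^\rho<\mu$ for $\rho<\mu$; you only use $2^{<\lambda}=2^\mu$, which is right.

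Everything else in the Fact is not proved in the paper. The ladder system, its $\mu^+$-freeness and the guessing property are quoted from \cite{Sh:1268} (Theorem~0.8 and Claim~0.9(2), with the freeness conclusion of Theorem~0.6). The last sentence comes from applying that result separately to each stationary set. Your attempt to reprove these parts has genuine gaps already in the freeness step:
\begin{itemize}
\item A scale in $\prod_{i<\kappa}\mu_i/J^{\bd}_\kappa$ of true cofinality $\mu^+$ has only $\mu^+$ members. But $\bar C$ needs $\lambda$ ladders on each $\delta\in S$, and the hypotheses allow $\lambda>\mu^+$ (exactly when $2^{\mu^+}=2^\mu$). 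No thinning or re-indexing produces more functions.
\item The claim that the scale is $\mu^+$-free is asserted without argument, and it is not true of scales in general.
\item Step (iii) conflates two notions. Definition~\ref{y37} only gives $\eta_1(s)\neq\eta_2(s)$ at the same argument $s$. Definition~\ref{sbb-free} needs the tails of the \emph{ranges} $C^\delta_\gamma$ to be disjoint, including for ladders on different $\delta$'s, such as a $\delta$ and members of $S\cap\delta$ cofinal in it.
\end{itemize}

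The prediction step is the real content, and your sketch does not reach it. Reducing to a single $\delta$ is legitimate for Definition~\ref{sbbd} as stated. For $\Theta=2$ your count even finishes the job: $q$ refutes only the sequence $\langle 1-F^\delta_\gamma(q\restriction C^\delta_\gamma):\gamma<\lambda\rangle$, so $\bar c\mapsto q_{\bar c}$ would be injective.

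For $\Theta\geq 3$ (and in the application $\Theta\geq\aleph_0$) this fails. A single $q$ refutes the whole product $\prod_{\gamma<\lambda}\bigl(\Theta\setminus\{F^\delta_\gamma(q\restriction C^\delta_\gamma)\}\bigr)$, a set of size $2^\lambda$. So every $q$ refutes $2^\lambda$ many $\bar c$, your pigeonhole step yields no contradiction, and the required ``coherence'' is never specified.

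The weak-diamond idea does not fill this gap either. The Devlin--Shelah theorem obtained from $2^{<\lambda}=2^\mu<2^\lambda$ guesses $2$-valued colourings, whereas here you must guess a $\Theta$-valued colour at a single coordinate. Coding $c_\gamma$ by several bits of a branch would require all those bits to be guessed at the same $\gamma$. Nothing in your sketch provides this, and the obstacle you flag yourself (freeness only for $\le\mu$ ladders versus $\lambda$ coordinates) is left open.

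As written, the proposal proves only that $\lambda$ is regular. For the black box and freeness assertions you must either reproduce Shelah's construction or, as the paper does, invoke \cite{Sh:1268}.
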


\begin{proof}
	The regularity of $\lambda$ follows from minimality.  If
	$\nu=\cf(\lambda)<\lambda$ and
	$\lambda=\bigcup_{i<\nu}I_i$ with $|I_i|=\rho_i<\lambda$, then
	$2^{\rho_i}\leq2^\mu$ for every $i<\nu$, and therefore
	\[
	2^\lambda\leq\prod_{i<\nu}2^{\rho_i}
	\leq(2^\mu)^\nu=2^{\mu\cdot\nu}\leq2^\mu,
	\]
	because $\max\{\mu,\nu\}<\lambda$ and, by the minimality of $\lambda$,
	$2^{\max\{\mu,\nu\}}\leq 2^\mu$.  This contradicts
	$2^\lambda>2^\mu$.
	The black-box assertion is the required specialization of the multiple
	Super Black Box of \cite{Sh:1268}; compare Theorem~0.8 and Claim~0.9(2)
	there, together with the freeness conclusion of Theorem~0.6.  Applying the
	result separately gives the last assertion.
\end{proof}

\begin{theorem}\label{main2}
	Assume that $\mu$ is a strong limit singular cardinal of cofinality
	$\aleph_0$ and
	\(
	\lambda=\min\{\rho:2^\rho>2^\mu\}<2^\mu.
	\)
	Let $R$ be a ring with $1$, $|R|<\mu$, whose additive group is
	cotorsion-free and $\sigma$-free, where $\sigma$ is an infinite cardinal or
	$0$.  Then there exists a $\mu^+$-free $R$-module $M$ of cardinality
	$\lambda$ such that
	\(
	\End_{\mathbb Z}(M)\cong R.
	\)
	Moreover $(M,+)$ is $\min\{\sigma,\mu^+\}$-free, with the usual convention
	that this assertion is void when $\sigma=0$.
\end{theorem}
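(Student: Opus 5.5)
We transfer the architecture of Theorem~\ref{d71} to the Super Black Box setting, replacing the weak-diamond guesses by the colors $c^\delta_\gamma$ of Definition~\ref{sbbd}. Since $\cf(\mu)=\aleph_0$, we take $\kappa=\aleph_0$. We use the nice $\aleph_0$-frame of Proposition~\ref{d68plus}, built from the cotorsion-free group $(R,+)$: the test blocks are the countable chains $y_n=k_ny_{n+1}+x_n+i\ell_n^c b$ with $k_n=n+2$, and the coefficients $\ell_n$ come from the factorial expansion of some $a\in\widehat{\mathbb Z}$ with $ab\notin A$. Fix $\lambda=\min\{\rho:2^\rho>2^\mu\}$, which is regular by Fact~\ref{super-bbthm}, and a stationary $S\subseteq S^\lambda_{\aleph_0}$. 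Take $\bar C=\langle C^\delta_\gamma:\delta\in S,\gamma<\lambda\rangle$ with the multiple Super Black Box property, colors $\Theta=|R|^+\cdot 2<\mu$. Let $X=\bigoplus_{x\in\mathbf I}Rx$ with $|\mathbf I|=\lambda$ and $\mathbf I=\{x_\alpha:\alpha<\lambda\}$. Each $C^\delta_\gamma$, enumerated as $\langle\alpha_n\rangle$, plays the role of a branch $\eta_\alpha$ in Construction~\ref{c310}. For $f\in\mathbf P_\xi$ let $G_f\subseteq\widehat X$ be generated by $X_{<\xi}$ and the blocks $\Gamma^{\delta,\gamma}_{f(\delta,\gamma)}$.

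\textbf{Step 1 (freeness).} The analogue of Lemma~\ref{d222} holds: every $G_f$, and the final $M$, is $\mu^+$-free as an $R$-module. Given at most $\mu$ blocks, $\mu^+$-freeness of $\bar C$ (Definition~\ref{sbb-free}) lets us cut bounded initial pieces so the tails are disjoint. Adding blocks one at a time, each quotient is isomorphic to some $H/G_m^*$, which is free. The additive $\min\{\sigma,\mu^+\}$-freeness then follows exactly as in the last paragraph of the proof of Theorem~\ref{d71}. The cardinality is $\lambda$ because $|R|<\mu<\lambda$.

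\textbf{Step 2 (killing lemmas).} These are the claims \ref{pnon}/\ref{non} of the introduction. Let $g\in\End(G_f,+)$ with $g(b)\notin Rb$ for some $b$, which exists by Lemma~\ref{d89a}. Then for the fixed block at $(\delta,\gamma)$ some pair $(c,i)\in(R\setminus\{0\})\times2$ has the persistent property that $g$ extends to no later $G_{f'}$ with $f'(\delta,\gamma)=(c,i)$. This is the argument of Proposition~\ref{12}. If both $i=0,1$ admitted extensions, subtracting them and projecting via a map $\delta$ separating $g(b)$ from $Rb$ inside a free summand would give $u_0=K_mu_m+s_mb'$ with $b'\neq0$. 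Passing to the completion yields $u_0=ab'\notin A$, which is a contradiction.

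\textbf{Step 3 (diagonalization).} For each $\delta\in S$ and $\gamma<\lambda$ we define $F^\delta_\gamma$ on ${}^{C^\delta_\gamma}(2^\mu)$. We code, through a fixed enumeration and the fact that $|G_f\rest\alpha|\le2^\mu$ since $\mu$ is strong limit, the approximation $f$ and the graph of a would-be endomorphism restricted to the union of the blocks below $\sup C^\delta_\gamma=\delta$. The value $F^\delta_\gamma$ is the least good $(c,i)$ from Step 2, or a default value on invalid codes. The Super Black Box supplies colors $c^\delta_\gamma$, and we set $f(\delta,\gamma)=c^\delta_\gamma$ row by row, with $f_\xi$ increasing and $M=G_f$ for $f=\bigcup f_\xi$.

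Given a bad $h\in\End(M,+)$, a club of $\delta$ satisfies $h[G_{f\rest\delta}]\subseteq G_{f\rest\delta}$; choose $\delta\in S$ in it. Let $q:\delta\to2^\mu$ code $(f\rest\delta,h\rest G_{f\rest\delta})$. Some $\gamma$ then has $F^\delta_\gamma(q\rest C^\delta_\gamma)=c^\delta_\gamma=f(\delta,\gamma)$, and Step 2 forbids $h$, contradiction. Injectivity of $R\to\End(M)$ follows from the free core.

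The main obstacle is this coding. The information $q\rest C^\delta_\gamma$ lives only on a countable set $C^\delta_\gamma$, so it must determine $h$ on the relevant countable block and on $b$. I would arrange that $q(\alpha_n)$ encodes $h$ on the elements of $X_{\le\alpha_n}$ and blocks used at stage $\alpha_n$, including a fixed $b$ chosen among those below $\min C^\delta_\gamma$. The Step 2 argument must then be checked to need only the values $h(y_n)$, $h(x_{\alpha_n})$ and $h(b)$, which are all coded along $C^\delta_\gamma$.
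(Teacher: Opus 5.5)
\paragraph{The main gap: matching the block's $b$ to the bad endomorphism.} Your killing step only works at a coordinate $(\delta,\gamma)$ whose block $\Gamma^{\delta,\gamma}_{c,i}$ is built on an element $b$ with $h(b)\notin Rb$. The block is determined by $(\delta,\gamma)$ and the color $f(\delta,\gamma)\in E$ alone. Since $|E|\le |R|+\aleph_0<\mu<\lambda=|Z|$, the color cannot carry $b$. Nor can $b$ be read off from $q$, which depends on the unknown $h$. So $b$ has to be fixed in advance as part of the coordinate.

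Your final argument (``choose $\delta\in S$ in the club'') gives no control over which $b$ row $\delta$ uses. The Super Black Box gives no control over which $\gamma$ is guessed. If, as you suggest, $b$ is ``chosen among those below $\min C^\delta_\gamma$'' and so varies with $\gamma$, the guessed $\gamma$ may carry some $b'$ with $h(b')\in Rb'$. There $F^\delta_\gamma$ has no killing value, and nothing is killed.

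The paper closes this as follows:
\begin{enumerate}
\item It partitions $S=\bigcup_{b\in Z}S_b$ into stationary sets and applies Fact~\ref{super-bbthm} to each $S_b$.
\item Every block in a row $\delta\in S_b$ is built on the same $b$. The ladder points come from a private part $\mathbf I_b$ of the basis, disjoint from $\supp(b)$. This keeps tails attached to different $b$'s disjoint from each other and from the roots $Rb$, which your Step~1 freeness argument also needs.
\item At the end it takes $\xi\in\mathcal D\cap S_b$ for the witness $b$ of $h$.
\end{enumerate}

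\paragraph{The coding step.} The problem you single out as the main obstacle is not where the difficulty lies, and your proposed fix targets the wrong data. The paper applies the Super Black Box one row at a time during the recursion:
\begin{enumerate}
\item $G_{f_\xi}$ contains only blocks at $(\delta',\gamma')\in(S\cap\xi)\times\xi$, so $|G_{f_\xi}|<\lambda$.
\item Hence $|\End(G_{f_\xi},+)|\le 2^{|G_{f_\xi}|}\le 2^\mu$. This follows from the minimality of $\lambda$, not from $\mu$ being strong limit.
\item Each candidate $g$ gets a single code below $2^\mu$, and $q$ is the constant function with that value.
\item $F^\xi_\gamma(q)$ is the killing value $e_g(\gamma)$ of Claim~\ref{pnon}, and the row is then frozen via $f^+_\xi$.
\end{enumerate}
A cumulative code would also work: if $q(\alpha)$ codes the whole pair up to $\alpha$, then $q\restriction C^\delta_\gamma$ already determines everything, because $C^\delta_\gamma$ is cofinal in $\delta$.

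By contrast, the values $h(y_n)$ you want to code belong to the new block at $(\delta,\gamma)$, which is not part of $G_{f\restriction\delta}$. What the killing step actually uses is $g(b)$, the values $g(x_{\alpha_n})$, and the choice of the non-admissible bit. In Claim~\ref{pnon} that bit is defined through non-extendability of the whole $g$, so $F$ needs the full restriction.

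Also note that the stages must have size $<\lambda$. If $G_{f\restriction\delta}$ contained all blocks of earlier rows, it would have size $\lambda$. Then neither the coding into $2^\mu$ nor the club argument would be available.
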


\begin{proof}
	Put
	\(
	\Theta=\max\{|R|,\aleph_0\}<\mu.
	\)
	We divide the proof into steps.
	
	\medskip
	\noindent\textit{Step 1: the free core and stationary bookkeeping.}
	
	Choose a free $R$-module
	\[
	X=\bigoplus_{x\in\mathbf I}Rx,
	\qquad |\mathbf I|=\lambda.
	\]
	Put
	\(
	Z=\{rx+y:r\in R,\ x,y\in\mathbf I,\ x\neq y\}.
	\)
	Then $|Z|=\lambda$, and every $b=rx+y\in Z$ is unimodular in $X$: replacing
	$y$ by $b$ gives another basis of $X$.  Hence $Rb$ is a free cyclic direct
	summand of $X$. We shall repeatedly use the following elementary fact.
	
	\begin{fact}  If
	$N\supseteq X$ is an $R$-module and $g:X\to N$ is additive such that
	$g(b)\in Rb$ for every $b\in Z$, then $g$ is multiplication by a single scalar
	from $R$ on $X$.  Indeed, applying the hypothesis to basis elements and to
	$x+y$ shows that $g(x)=rx$ for one fixed $r\in R$ and every basis element
	$x$.  If $r_0\in R$ and $x,y,z$ are three distinct basis elements, then
	\[
	g(r_0x+y)=s(r_0x+y),\qquad
	g(r_0x+z)=t(r_0x+z)
	\]
	for suitable $s,t\in R$.  Subtracting $g(y)=ry$ and $g(z)=rz$ and comparing
	coefficients in the free module $X$ gives $s=t=r$, hence
	$g(r_0x)=rr_0x$.  Additivity finishes the argument. \end{fact}
	By the stationary splitting theorem, partition a stationary
	$S\subseteq S^\lambda_\omega$ into pairwise disjoint stationary sets
	\(
	S=\bigcup_{b\in Z}S_b.
	\)
	For each $b\in Z$ apply Fact~\ref{super-bbthm} to $S_b$ and obtain
	\(
	\bar C^b=\langle C^{b,\delta}_\gamma:
	\delta\in S_b,\ \gamma<\lambda\rangle.
	\)
	For notational simplicity, when $\delta\in S_b$ we write
	$C^\delta_\gamma=C^{b,\delta}_\gamma$ and put $b_\delta=b$.
	After intersecting each $S_b$ with a tail, we may assume that all basis
	coordinates occurring in $b$ have rank below every $\delta\in S_b$.
	
	To keep the branch supports belonging to different witnesses disjoint,
	partition $\mathbf I$ into sets
	\(
	\mathbf I_b\quad(b\in Z)\),		\( |\mathbf I_b|=\lambda,
	\)
	and, deleting the finite support of $b$ from $\mathbf I_b$, fix bijections
	\(
	\iota_b:\lambda\longrightarrow
	\mathbf I_b\setminus\supp_{\mathbf I}(b).
	\)
	The rank of $\iota_b(\alpha)$ is declared to be $\alpha$.  We also assign
	ranks below $\lambda$ to the finitely many remaining basis elements so that
	the sets of rank $<\xi$ have cardinality $<\lambda$ for $\xi<\lambda$.
	Let $X_{<\xi}$ be the free submodule generated by the basis elements of rank
	$<\xi$.
	
	\medskip
	\noindent\textit{Step 2: the cotorsion-free test systems.}

	Let $0\neq c\in R$.  Regard $R$ as a subgroup of its $\mathbb Z$-adic
	completion $\widehat R$.  Since $(R,+)$ is cotorsion-free, there is
	$\pi_c\in\widehat{\mathbb Z}$ such that
	\(
	\pi_c c\notin R.
	\)
	Indeed, otherwise the nonzero subgroup $\widehat{\mathbb Z}c$ of $R$ would
	be cotorsion.  Put $k_n=n+2$ and
	$K_0=1$, $K_{n+1}=k_nK_n=(n+2)!$.  Choose the factorial expansion
	\(
	\pi_c=\sum_{n<\omega}\ell_n^cK_n,\)
	\(0\leq\ell_n^c<k_n.
	\)
	Then the system
	\[
	u_n=k_nu_{n+1}+\ell_n^c c
	\qquad n<\omega
\quad(\ast)
	\]
	has no solution in $R$.  For if it had a solution, iteration would give
	\(
	u_0=K_m u_m+\sum_{n<m}\ell_n^cK_nc.
	\)
	As $m\to\infty$, the first term tends to $0$ in $\widehat R$, so
	$u_0=\pi_cc$, contrary to \(
	\pi_c c\notin R
	\).
	Fix $\delta\in S_b$ and $\gamma<\lambda$, and enumerate
	\(
	C^\delta_\gamma=\langle\alpha_n:n<\omega\rangle
	\)
	in increasing order.  Put
	\(
	x^{b}_{\alpha_n}=\iota_b(\alpha_n)\in\mathbf I_b.
	\)
\begin{definition}\label{defy}
	For $0\neq c\in R$ and $i<2$, define elements
$y_n^{\delta,\gamma,c,i}\in\widehat X$ by the convergent tail series
\[
y_n^{\delta,\gamma,c,i}
=\sum_{m\geq n}\Bigl(\prod_{n\leq j<m}k_j\Bigr)
\bigl(x^b_{\alpha_m}+i\ell_m^c b\bigr).
\]\end{definition}
	Thus
	\[
	y_n^{\delta,\gamma,c,i}
	=k_ny_{n+1}^{\delta,\gamma,c,i}
	+x^b_{\alpha_n}+i\ell_n^c b
	\quad(\sharp)
	\]
	Let $\Gamma^{\delta,\gamma}_{c,i}$ be the $R$-submodule of $\widehat X$
	generated by
	$\{
	b,  x^b_{\alpha_n}, 	y_n^{\delta,\gamma,c,i}:  n<\omega
\}$.
	The displayed realization has no additional finite $R$-linear relations.
	Indeed, if
	$r b+\sum_{n\leq N}r_ny_n^{\delta,\gamma,c,i}=0$, comparison of the
	coordinate $x^b_{\alpha_0}$ gives $r_0=0$, then comparison of
	$x^b_{\alpha_1}$ gives $r_1=0$, and so on; finally $r=0$.  Consequently
	\(
	\Gamma^{\delta,\gamma}_{c,i}
	\cong Rb\oplus\bigoplus_{n<\omega}Ry_n^{\delta,\gamma,c,i}.
	\)
	Moreover,
	\[
	\Gamma^{\delta,\gamma}_{c,i}/
	\Bigl(Rb+\sum_{n<m}Rx^b_{\alpha_n}\Bigr)
	\quad\hbox{is free}
\quad(\ast,\ast)	\]
for every $m<\omega$. Indeed, modulo the displayed root the first $m$ relations in $(\sharp)$
	allow successive elimination of $\{y_0,\dots,y_{m-1}\}$.
	Set
	\[
	E=(R\setminus\{0\})\times2,
	\qquad
	Y^\delta_\gamma=
	\{\Gamma^{\delta,\gamma}_{c,i}:(c,i)\in E\}.
	\]
	Thus $|E|\leq\Theta<\mu$.
\begin{notation}\label{gf} We use the following notation.
\begin{enumerate}
	\item[$\bullet$]  For $\xi\leq\lambda$ let $\mathbf P_\xi$ consist of the functions
	\(
	f:\{ y_{\delta, \gamma, 0}: (\delta, \gamma) \in (S\cap\xi)\times\xi\}\to E,
	\) and
let $\mathbf P_\xi^+$ consist of the functions
	\(
	f:\{ y_{\delta, \gamma, 0}: (\delta, \gamma) \in (S\cap\xi)\times\lambda\}\to E.
	\)
	\item[$\bullet$] For $f\in\mathbf P_\xi$, let $G_f$ be the $R$-submodule of $\widehat X$
	generated by $X_{<\xi}$ together with all
	$\Gamma^{\delta,\gamma}_{f(y_{\delta,\gamma, 0})}$ for
	$(\delta,\gamma)\in\dom(f)$.
The same definition is used for
	$f\in\mathbf P_\xi^+$.
\end{enumerate}\end{notation}

	\begin{claim}\label{freeness}
Let $f\in\mathbf P_\xi$, and suppose $P_0$ is one of the supported free
members of the system, and let $A\subseteq G_f$ be of cardinality $\leq\mu$. Then there is a
supported free $P\subseteq G_f$ such that
\(P_0\cup A\subseteq P\),
		\(P=P_0\oplus P_1
		\)
		for a free $R$-module $P_1$.  Consequently $G_f$ is $\mu^+$-free and, if
		$\sigma\neq0$, $(G_f,+)$ is $\min\{\sigma,\mu^+\}$-free.
	\end{claim}
	
	\begin{PROOF}{\ref{freeness}}
		At most $\mu$ local pieces are needed to support $A$; call their set of
		coordinates $u$.  For each fixed $b\in Z$, let $u_b$ be the coordinates in
		$u$ whose first coordinate belongs to $S_b$.  The $\mu^+$-freeness of
		$\bar C^b$ gives, for $(\delta,\gamma)\in u_b$, finite initial pieces of
		$C^\delta_\gamma$ whose complementary tails are pairwise disjoint.  Since
		the tagged basis blocks $\mathbf I_b$ are pairwise disjoint, the resulting
		tails are pairwise disjoint for all members of $u$, including members
		belonging to different $b$'s.
		
		For $a=(\delta,\gamma)\in u$, choose $m_a<\omega$ beyond the corresponding
		finite deleted set and put
		\(
		B_a=Rb_\delta+\sum_{n<m_a}R x^{b_\delta}_{\alpha_n}.
		\)
		By $(\ast,\ast)$,
		$\Gamma_a/B_a$ is free, where $\Gamma_a=\Gamma^{\delta, \gamma}_{c, i}$ with $(c,i)=f(y_{\delta,\gamma, 0})$ denotes the local module selected
		at $a$.  Close the support of $A$ (and of $P_0$, in the relative case)
		under all these roots.  Add the selected local pieces one at a time.  At a
		successor step, the private $y$-coordinates and the disjoint tagged tails
		show that the intersection with what has already been constructed is exactly
		the chosen root together with the part already belonging to $P_0$; hence
		the new quotient is a free module.  Choose a basis extending the previous
		basis.  At limits take unions of the increasing bases.  This produces the
		required $P$, and in the relative construction $P_0$ remains a free direct
		summand.
		
		For the additive assertion put $\tau=\min\{\sigma,\mu^+\}$ and let
		$A_0\leq(G_f,+)$ have cardinality $<\tau$.  Put $A_0$ inside a free
		$R$-module $P=\bigoplus_{j\in J}Re_j$ supplied above.  Only $<\tau$ basis
		coordinates occur in elements of $A_0$.  For every such coordinate $j$,
		the subgroup of $(R,+)$ generated by its coefficients has cardinality
		$<\tau\leq\sigma$, hence is free abelian.  Therefore $A_0$ is a subgroup of
		a free abelian group and is itself free.  If $\tau=\aleph_0$, the conclusion
		follows from torsion-freeness.  This proves the claim.
	\end{PROOF}
	
	\medskip
	\noindent\textit{Step 3: non-extendability of bad homomorphisms.}
	
	\begin{claim}\label{pnon}
		Assume $\delta\in S_b$, $\gamma<\lambda$, $h\in\mathbf P_\delta$, and
		$g\in\End(G_h,+)$ is a bad homomorphism, i.e.,  it satisfies
		\(
		g(b)\notin Rb.
	\)
		Then there is $(c,i)\in E$ such that, whenever $f$ is any later selection
		extending $h$ and
		$f(y_{\delta,\gamma, 0})=(c,i)$, the map $g$ has no extension to an additive
		endomorphism of $G_f$:
		\[
		\begin{CD}
		G_h @>\subseteq>> G_f  \\
		@VgVV @VV\not\exists\overline{g}V \\
		G_h  @>\subseteq>> G_f
		\end{CD}
		\]
	\end{claim}
	
	\begin{PROOF}{\ref{pnon}}
		Write $C^\delta_\gamma=\langle\alpha_n:n<\omega\rangle$.  By the relative
		part of Claim~\ref{freeness}, starting with the free direct summand $Rb$,
		choose a supported free module
		\(
		P_0=Rb\oplus P_0'
		\)
		which contains
		\(
	\{	g(b),
		g(x^b_{\alpha_n}):n<\omega\}.
		\)
		Since $g(b)\notin Rb$, the $P_0'$-component of $g(b)$ is nonzero.  Choose a
		basis coordinate projection $d:P_0'\to R$ which is nonzero on that
		component, and extend it by zero on $Rb$.  Thus
		\(
		d(b)=0,	\) and
	\( c:=d(g(b))\neq0.
		\)
Next we call $i<2$ \emph{admissible} if there is some later selection $f_i$
		containing $\Gamma^{\delta,\gamma}_{c,i}$ and an endomorphism
		$\bar g_i\in\End(G_{f_i},+)$ extending $g$.  Suppose both $0$ and $1$ are
		admissible.  By the relative part of Claim~\ref{freeness}, enlarge $P_0$ in
		$G_{f_i}$ to a supported free direct sum
		\(
		P_i=P_0\oplus Q_i
		\)
		containing all $\bar g_i(y_n^{\delta,\gamma,c,i})$, $n<\omega$.
		Extend $d$ to an $R$-homomorphism $d_i:P_i\to R$ by setting it equal to zero
		on $Q_i$, and put
		\(
		a_n^i=d_i\bigl(\bar g_i(y_n^{\delta,\gamma,c,i})\bigr).
		\)
		Applying $\bar g_i$ to $(\sharp)$  and then $d_i$ gives
		\[
		a_n^i=k_na_{n+1}^i+d(g(x^b_{\alpha_n}))+i\ell_n^c d(g(b)).
		\]
		Subtracting the equation for $i=0$ from the equation for $i=1$ and putting
		$u_n=a_n^1-a_n^0$, we obtain
		\(
		u_n=k_nu_{n+1}+\ell_n^c c,
		\)
		contrary to $(\ast)$.  Hence at least one of the two bits is not admissible.
		For that $i$, the pair $(c,i)$ has the required persistent non-extension
		property.
	\end{PROOF}
	
	%Notice that scalar maps are never killed by this argument: if $g=\mu_r$, then
	%$d(g(b))=d(rb)=r d(b)=0$.
	
	\medskip
	\noindent\textit{Step 4: the Super Black Box choice at one row.}
	
	\begin{claim}\label{non}
		Assume $\xi\in S_b$ and $h\in\mathbf P_\xi$.  There is
		$h^+\in\mathbf P_{\xi+1}^+$ extending all previously fixed choices such that
		\[
		(*)_\xi\qquad
		\text{every }g\in\End(G_h,+)\text{ with }g(b)\notin Rb
		\text{ is permanently killed at some }(\xi,\gamma).
		\]
	\end{claim}
	
	\begin{PROOF}{\ref{non}}
		Since $|G_h|<\lambda$ and $\lambda$ is the first cardinal at which the
		power set exceeds $2^\mu$,
		\(
		|\End(G_h,+)|\leq2^{|G_h|}\leq2^\mu.
		\)
		Fix an injection
		\(
		\operatorname{code}:\End(G_h,+)\to 2^\mu.
		\)
		For every $\gamma<\lambda$ and every $g$ with $g(b)\notin Rb$, choose by
		Claim~\ref{pnon} a killing value
		\(
		e_g(\gamma)\in E.
		\)
		Fix an injection of $E$ into $\Theta$, and identify $E$ with its image.
		Define
		\(
		F^\xi_\gamma:{}^{C^\xi_\gamma}(2^\mu)\to\Theta
		\)
		by setting $F^\xi_\gamma(q)=e_g(\gamma)$ when $q$ is the constant function
		with value $\operatorname{code}(g)$ for some such $g$, and taking a fixed
		default value otherwise.  Applying the row consequence of
		Fact~\ref{super-bbthm} to the stationary system on $S_b$ gives colors
		$e^\xi_\gamma\in E$ such that, for every $g$ with $g(b)\notin Rb$, some
		$\gamma<\lambda$ satisfies
		\(
		e^\xi_\gamma=e_g(\gamma).
		\)
		Choose the row $\xi$ of $h^+$ by
		$h^+(y_{\xi,\gamma, 0})=e^\xi_\gamma$.  Claim~\ref{pnon} gives $(*)_\xi$.
	\end{PROOF}
	
	\medskip
	\noindent\textit{Step 5: the original recursion.}
	
		\begin{construction}\label{main-construction}
		We construct $\langle f_\xi, f_\xi^+ : \xi < \lambda \rangle$ by induction on $\xi$ satisfying the following conditions:
		
		\begin{enumerate}
			\item[(a)] $f_\xi \in \mathbf{P}_\xi$ and $f_\xi^+ \in \mathbf{P}_{\xi+1}^+$,
			
			\item[(b)] $\xi < \zeta \implies f_\xi \subseteq f_\zeta$ and $f_\xi^+ \subseteq f_\zeta^+$,
			
				\item[(c)] $\xi < \zeta \implies f_\xi \cup \big( f_\xi^+ \restriction \{y_{\delta,\gamma,0}: (\delta,\gamma) \in (S \cap \xi) \times \zeta\} \big) \subseteq f_\zeta$,
			
			\[
			\begin{CD}
			f_\xi^+(y_{\delta,\gamma,0}) @>\subseteq>> f_\zeta^+(y_{\delta,\gamma,0}) \\
			@AAA @AAA \\
			f_\xi(y_{\delta,\gamma,0}) @>\subseteq>> f_\zeta(y_{\delta,\gamma,0})
			\end{CD}
			\]

			\item[(d)] if $\xi$ is a limit ordinal, then $f_\xi = \bigcup_{\zeta < \xi} f_\zeta$,
			
			\item[(e)] if $\xi \in S$ and $f_\zeta, f_\zeta^+$ for $\zeta < \xi$ are defined, then
			\begin{enumerate}
				\item[$(e_1)$] $f_\xi = \bigcup_{\zeta < \xi} f_\zeta$ (by clause (d)),
				\item[$(e_2)$] $f_\xi^+ \supseteq \bigcup_{\zeta < \xi} f^+_\zeta$,
				\item[$(e_3)$] $f^+_{\xi}$ is chosen as in Claim \ref{non}, i.e., satisfying:
				\begin{quote}
					$(\ast)_\xi$: For every bad map $g \in \operatorname{End}(G_{f_\xi},+)$,  there is some $\gamma < \lambda$ such that $g$ does not extend to any $\bar{g} \in \operatorname{End}(G_f,+)$ for any $f \in \mathbf{P}_\lambda$ with $f \supseteq f_\xi$ and $f(y_{\xi,\gamma,0}) = f_{\xi}^+(y_{\xi,\gamma,0})$.
				\end{quote}
			\end{enumerate}
			
			\item[(f)] if $\xi \notin S$, we simply let $f^+_{\xi} \in \mathbf{P}_{\xi+1}^+$ be any function extending $f_\xi \cup \bigcup_{\zeta < \xi} f^+_\zeta$,
			
			\item[(g)] $f_{\xi+1} = f_\xi \cup \big( f_\xi^+ \restriction \{y_{\delta,\gamma,0}: (\delta,\gamma) \in (S \cap (\xi+1)) \times (\xi+1)\} \big)$.
		\end{enumerate}
		
		Finally, set $f_\ast = \bigcup_{\xi < \lambda} f_\xi^+$ and   $M:=G_{f_\ast}$ is, the  submodule of $\widehat X$ generated by
		\(
		\bigcup \{ \Gamma^{\xi,\gamma}_{f_\ast(y_{\xi,\gamma,0})} : \xi \in S,\ \gamma < \lambda \}.
		\)
	
	\end{construction}		
	
  By
	Claim~\ref{freeness}, $M$ is $\mu^+$-free as an $R$-module and has the
	stated additive freeness.  Since $X\subseteq M$ and $|X|=\lambda$, while
	all local families have cardinality at most $\lambda$, we also have
	$|M|=\lambda$.
	
	\medskip
	\noindent\textit{Step 6: the final club argument.}
	
	We first record why it is enough to test an endomorphism on the free core.
	The relations $(\sharp)$ imply that, for every positive integer $m$ and every
	local generator $y_n$, some finite $R$-linear combination $x\in X$ satisfies
	\(
	y_n-x\in mM,
	\)
	because a sufficiently long product of the integers $k_j=j+2$ is divisible
	by $m$.  Hence $X$ is dense in $M$ for the intrinsic $\mathbb Z$-adic
	topology.
	Furthermore this topology on $M$ is separated.  Indeed, if
	$a\in\bigcap_{m\geq1}mM$, choose $a_m\in M$ with $ma_m=a$.  The countable
	set $\{a\}\cup\{a_m:m\geq1\}$ is contained, by Claim~\ref{freeness}, in a
	free $R$-submodule $P$ of $M$.  Then
	$a\in\bigcap_{m\geq1}mP$.  As $(R,+)$ is cotorsion-free, it is reduced, and
	so is the direct sum $(P,+)$; therefore this intersection is $0$ and
	$a=0$. By definition, $M$ is separated, as claimed.

	Now let $h\in\End(M,+)$.  If $h\restriction X=\mu_r$ for some $r\in R$ (see Definition~\ref{mulm} for $\mu_r$), then
	$k=h-\mu_r$ vanishes on the dense subgroup $X$.  For $a\in M$ and every
	$m\geq1$, choose $x_m\in X$ with $a-x_m\in mM$.  Then
	$k(a)=k(a-x_m)\in mM$ for all $m$, hence $k(a)=0$ by separatedness.  Thus
	$h=\mu_r$ on all of $M$.
	
	Suppose therefore that $h$ is not of multiplication type.  By the core test
	from Step~1, choose $b\in Z$ such that
	\(
	h(b)\notin Rb.
	\)
	Let
	\[
	\mathcal{D}:=\big\{\xi<\lambda:h[G_{f_\xi}]\subseteq G_{f_\xi}\big\}.
	\]
	The set $\mathcal{D}$ contains a club.  Closedness follows from the continuity property in Construction~\ref{main-construction}(d).  For unboundedness, starting above any $\alpha<\lambda$, alternately
	enlarge the stage so that it contains the image under $h$ of the preceding
	stage and take the supremum after $\omega$ steps; regularity of $\lambda$
	keeps the resulting ordinal below $\lambda$.
	Now choose
	\(
	\xi\in \mathcal{D}\cap S_b
	\)
	large enough that $b,h(b)\in G_{f_\xi}$.  Put
	$g=h\restriction G_{f_\xi}$.  Then $g(b)\notin Rb$, so property $(*)_\xi$
	of Construction~\ref{main-construction} says that $g$ is permanently killed
	by one of the row-$\xi$ local choices.  That choice belongs to the final
	$f_*$, whereas $h$ itself is an endomorphism of $M=G_{f_*}$ extending $g$,
	a contradiction.  
	
	Therefore every additive endomorphism of $M$ is
	multiplication by a scalar from $R$.
	Finally, the assignment \(r\mapsto \mu_r,
	\) defines
	\(
	R\to\End_{\mathbb Z}(M)\),
which is injective because $X$ contains a free basis element: if $\mu_r=0$, then
	$rx=0$ for a free basis element $x$, hence $r=0$.  It is surjective by the
	preceding paragraph.  Thus $\End_{\mathbb Z}(M)\cong R$.
\end{proof}
\iffalse
\begin{remark}\label{rem:cfomega1-correction}
	The original draft stated Theorem~\ref{main2} also for
	$\cf(\mu)=\omega_1$.  The countable local test above does not establish that
	case.  When $\otp(C^\delta_\gamma)=\omega_1$, a countable test sequence is
	bounded in the ladder, and after deleting a bounded initial part the local
	quotient needed in Claim~\ref{freeness} need no longer be free.  To extend
	the present Section~4 proof to cofinality $\omega_1$ one needs an
	$\omega_1$-long local test pair with free quotients at every proper initial
	stage and the same persistent non-extension property.  No such additional
	lemma is proved here.  The statement of Theorem~\ref{main2} is therefore
	restricted to the case actually established by the completion/step-lemma
	argument.
\end{remark}
\fi

\end{document}